	\def\MR#1{}
\newcommand{\type}{\textrm{type}}
\newcommand{\tr}{\operatorname{tr}}
\def\opn#1#2{\def#1{\operatorname{#2}}}
\opn\Cl{Cl} \opn\deg{deg} \opn\Stab{Stab} \opn\aff{aff} \opn\div{div}
\opn\cone{cone} \opn\End{End} \opn\mod{mod}  \opn\pdim{pdim} \opn\diag{diag} \opn\vert{vert} \opn\m{m} \opn\V{V} \opn\soc{Soc}
\opn\Cone{Cone} \opn\Pyr{Pyr} \opn\max{max} \opn\min{min} \opn\int{int} \opn\rev{rev} \opn\ker{ker} \opn\lat{lat} \opn\pull{pull}
\opn\cok{coker} \opn\ant{ant}
\opn\inte{int} \opn\embdim{embdim}
\newcommand{\kk}{\mathbb{k}}
\newcommand{\NN}{\normalfont\mathbb{N}}
\newcommand{\ZZ}{\normalfont\mathbb{Z}}
\newcommand{\MM}{{\normalfont\mathfrak{m}}}
\newcommand{\mm}{{\normalfont\mathfrak{m}}}
\newcommand{\fkn}{{\normalfont\mathfrak{n}}}
\newcommand{\QQ}{\mathbb{Q}}
\newcommand{\pp}{{\normalfont\mathfrak{p}}}
\newcommand{\HS}{{\normalfont\text{HS}}}
\newcommand{\depth}{\normalfont\text{depth}}
\newcommand{\codim}{{\normalfont\text{codim}}}
\newcommand{\pd}{\normalfont\text{codim}}
\newcommand{\Hom}{\normalfont\text{Hom}}
\newcommand{\indeg}{\normalfont\text{indeg}}
\newcommand{\Hilb}{{\normalfont\text{Hilb}}}
\def\f0{\mathbf{0}}
\def\1{\mathbf{1}}
\newtheorem{theorem}{Theorem}[section]
\newaliascnt{headcor}{headthm}
\newaliascnt{headconj}{headthm}
\newaliascnt{corollary}{theorem}
\newtheorem{corollary}[corollary]{Corollary}
\newaliascnt{claim}{theorem}
\newaliascnt{lemma}{theorem}
\newtheorem{lemma}[lemma]{Lemma}
\newaliascnt{conjecture}{theorem}
\newaliascnt{proposition}{theorem}
\newtheorem{proposition}[proposition]{Proposition}
\theoremstyle{definition}
\newaliascnt{definition}{theorem}
\newtheorem{definition}[definition]{Definition}
\newaliascnt{notation}{theorem}
\newaliascnt{condition}{theorem}
\newaliascnt{example}{theorem}
\newtheorem{example}[example]{Example}
\newaliascnt{examples}{theorem}
\newaliascnt{remark}{theorem}
\newtheorem{remark}[remark]{Remark}
\newaliascnt{question}{theorem}
\newtheorem{question}[question]{Question}
\newaliascnt{questions}{theorem}
\newaliascnt{problem}{theorem}
\newaliascnt{construction}{theorem}
\newaliascnt{setup}{theorem}
\newtheorem{setup}[setup]{Setup}
\newaliascnt{algorithm}{theorem}
\newaliascnt{observation}{theorem}
\newaliascnt{defprop}{theorem}
\newaliascnt{fact}{theorem}
\def\equationautorefname~#1\null{(#1)\null}
\def\sectionautorefname~#1\null{Section #1\null}
\def\subsectionautorefname~#1\null{\S #1\null}
\def\depth{\operatorname{depth}}
\def\Hom{\operatorname{Hom}}
\def\m{\mathfrak{m}}
\def\PF{\operatorname{PF}}
\def\type{\operatorname{type}}
\def\frob{\operatorname{F}}
\def\Ap{\operatorname{Ap}}
\def\tr{\operatorname{tr}}
\newcommand{\emb}{\operatorname{embdim}}
\title{Higher-dimensional Teter rings via the canonical trace}
\author{Sora Miyashita}
\address[Miyashita]
{Department of Pure And Applied Mathematics, Graduate School Of Information Science And Technology, The University of Osaka, Suita, Osaka 565-0871, Japan}
\email{u804642k@ecs.osaka-u.ac.jp}
\author{Taiga Ozaki}
\address[Ozaki]
{Department of Mathematics, School of Science, Institute of Science Tokyo,2-12-1 Ookayama, Meguro-ku, Tokyo 152-8550, Japan}
\email{taigaozaki0422.math@gmail.com}
\date{\today}
\keywords{Teter ring, canonical trace, nearly Gorenstein, Cohen--Macaulay type, numerical semigroup rings}
\subjclass[2020]{Primary 13H10, 13A02; Secondary 05E40.}
\begin{document}

	\maketitle

\begin{abstract}
We study Puthenpurakal's higher-dimensional Teter rings via the canonical trace ideal. We give a sufficient criterion for Teterness and show that, in the standard graded case, it is also necessary, yielding a characterization. Consequently, several nearly Gorenstein families are Teter; moreover, under certain hypotheses, the Cohen--Macaulay type of nearly Gorenstein rings is bounded by the codimension.
%We also analyze Teterness for fiber products, Veronese subalgebras, and numerical semigroup rings.
\end{abstract}

\section{Introduction}

%Artinian Teter ringは\cite{Teter1974SocleFactor}によって調べられ,
%盛んに研究されている.
%最近\cite{puthenpurakal2025higher}で高次元の環に対するTeter性の一般化が導入された.
%本論文では,これらのgraded versionである次を考察する.

%\end{definition}
%$(R,\mm)$をnon-Gorenstein and Cohen--Macaulay graded ringとする.
%$\dim(R)=0$のとき,
%\autoref{def:main}は古典的なTeter ringの定義~(\cite{Teter1974SocleFactor})と同値である.
%$\dim(R)>0$のとき,
%$R$がTeterならば局所化$R_{\mm}$が\cite{puthenpurakal2025higher}の意味でTeterになる.
%さらに$R$がdomainのときは$R$がTeterであることと,
%局所化$R_\mm$が
%\cite{puthenpurakal2025higher}の意味でTeterであることは同値になる.
%この意味で\autoref{def:main}は古典的なArtin Teter ringと高次元Teter ringと密接に関わっている.

%Cohen--Macaulay局所環の正準トレースは対応する代数多様体がnon-Gorenstein locusを記述する重要な概念であり,
%近年研究が盛んに行われている.
%本論文の主題は\autoref{def:main}の高次元次数付きTeter性を正準トレースを通じて調べることである.
%我々は一般の次数付き環に対して, \autoref{def:main}の意味でTeterになるための正準トレースを用いた十分条件を与える.
%さらに標準的次数付き環の場合はその十分条件が必要条件となることも示し,
%Teter性を正準トレースの構造と他のいくつかの環論的性質によって特徴づける.
%次が本論文の主結果である.

Teter rings~(\cite{Teter1974SocleFactor}) form a classical class within Artinian Cohen–Macaulay rings and are tightly linked to the Gorenstein property.
Beyond the Artinian case, a higher–dimensional local analogue was formulated in \cite{puthenpurakal2025higher}.
This paper introduces a \emph{graded} analogue that unifies the Artinian and local viewpoints.
%and ties them to the canonical trace $\tr(\omega_R)$, an invariant detecting the non–Gorenstein locus.
Throughout, let $R=\bigoplus_{i\ge 0} R_i$ be a non-Gorenstein Cohen--Macaulay graded ring with $R_0$ a field. 
Let $\omega_R$ denote the canonical module, and let $\mathfrak m:=\bigoplus_{i>0} R_i$.
In this paper, we adopt the following as our definition.
\begin{definition}\label{def:maybenice}
We say $R$ is \emph{Teter} if there exists a graded $R$–homomorphism
$\varphi:\ \omega_R \rightarrow R$
such that either $\varphi(\omega_R)=\mm$~(this happens only when $\dim(R)=0$), or $\varphi$ is injective and
$\embdim\!\bigl(R/\varphi(\omega_R)\bigr) \le \dim(R)$.
\end{definition}
It recovers the classical Artinian case at $\dim(R)=0$ and, in positive dimension, aligns with \cite{puthenpurakal2025higher}: Teter in our sense implies $R_{\mm}$ is Teter in theirs. If $R$ is a domain, the notions are equivalent, i.e., $R$ is Teter if and only if $R_{\mm}$ is Teter.
Thus \autoref{def:maybenice} furnishes a natural graded analogue.
%: it interpolates between the Artinian and local theories, localizes correctly, and dovetails with the canonical trace.

%For a Cohen--Macaulay ring $R$,
The
{\it canonical trace}
%trace of the canonical module
$\tr_R(\omega_R)$ is the ideal generated by the images of all maps $\omega_R$ to
$R$; it detects the non--Gorenstein locus and has seen renewed interest.
We establish sufficient conditions for $R$ to be Teter, phrased in terms of the canonical trace;
%together with levelness and type;
moreover, in the standard graded case these conditions characterize Teter rings.
We fix minimal notation:
Set $r_{\min}(R):=\dim_{R_0}([\omega_R]_{-a_R})$.
$R$ is called \emph{level}~(\cite{stanley2007combinatorics}) if
%$\omega_R$ is generated in a single degree (equivalently, 
$r(R)=r_{\min}(R)$, where $r(R)$ is the {\it (Cohen--Macaulay) type} and $a_R$ the {\it $a$-invariant} of $R$.
%$x \in \omega_R$ is said to be {\it torsion-free} if
%$rx = 0$ implies $r = 0$ for all $r \in R$;
Let $\indeg(\mm):=\min \{i  \in \ZZ : [\m_R]_i \neq 0\}$ and let $\pd(R):=\embdim(R)-\dim(R)$.
%denote the {\it codimension}
Our main result is stated as follows.

\begin{theorem}[{\autoref{thm:stdgradedTeter}}]\label{thm:stdgradedTeterX}
%Assume $R$ is not Gorenstein and Cohen--Macaulay. 
Consider the following conditions:
\begin{enumerate}[\rm (1)]
\item $R$ is level, $r(R)\ge \pd(R)$ and $[\tr(\omega_R)]_{\indeg(\mm)}$ contains a non-zero divisor;
\item $[\omega_R]_{-a_R}$ contains a torsion-free element $x$ of $\omega_R$~(that is, $rx = 0$ implies $r = 0$ for all $r \in R$),
%~(e.g., $R$ is a domain),
$r_{\min}(R) \ge \pd(R)$ and $[\tr(\omega_R)]_{\indeg(\mm)}$ contains a non-zerodivisor;
\item $R$ is a Teter and level ring with $r(R)=\pd(R)$ and $\dim(R)>0$;
\item $R$ is Teter with $\dim(R)>0$.
%\item $R$ is a Teter ring and $r(R)=\pd(R)$.
\end{enumerate}
%Then \((2)\Rightarrow(3) \Rightarrow (4)\) holds. If
%\(R_{\indeg_R(\mm)}R\) is \(\mm\)-primary,
%then \((1)\Rightarrow(2)\) also holds.
%Moreover,
%if
%\(R\) is standard graded, all the conditions are equivalent; in particular, if any one of them holds, then \(r(R)=
%\pd(R)\).
Then \((1) \Rightarrow (2)\Rightarrow(3)\Rightarrow(4)\) holds.
%If \(R_{\operatorname{indeg}_R(\mathfrak m)}R\) is \(\mathfrak m\)-primary, then also \((1)\Rightarrow(2)\).
Moreover, if \(R\) is standard graded, all the conditions are equivalent;
in particular,
if any one of them holds, then \(r(R)=r_{\min}(R)=\pd(R)\).
\end{theorem}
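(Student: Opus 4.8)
The plan is to prove the cycle $(1)\Rightarrow(2)\Rightarrow(3)\Rightarrow(4)$ in general and, when $R$ is standard graded, to close it with $(4)\Rightarrow(1)$. The implication $(1)\Rightarrow(2)$ is essentially definitional: levelness gives $r_0(R)=r(R)\ge\pd(R)$; the non-zerodivisor in $[\tr(\omega_R)]_{\indeg(\mm)}$ forces $\tr(\omega_R)$ to contain one, so $R$ is generically Gorenstein and $\omega_R$ is free of rank $1$ at each minimal prime; and since $R$ is level, $[\omega_R]_{-a_R}$ generates $\omega_R$ (hence generates it locally at each minimal prime), so a homogeneous prime-avoidance argument inside $[\omega_R]_{-a_R}$ produces the torsion-free $x$.

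For $(2)\Rightarrow(3)$ I first build a Teter map from the torsion-free element $x\in[\omega_R]_{-a_R}$ and the non-zerodivisor $u\in[\tr(\omega_R)]_{\indeg(\mm)}$. Writing $u=\sum_j\psi_j(z_j)$ with $\psi_j\colon\omega_R\to R$ and $z_j\in\omega_R$ homogeneous makes the combined map $\bigoplus_j\omega_R(-\deg\psi_j)\to R$ generically surjective, and (using that $R$ is generically Gorenstein and that $R_0$ meets every residue field at a minimal prime) a suitable homogeneous combination $\varphi=\sum_j a_j\psi_j\colon\omega_R\to R$ of degree $a_R+\indeg(\mm)$ is again generically surjective. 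Such a $\varphi$ is automatically injective, its kernel being a submodule of the torsion-free maximal Cohen--Macaulay module $\omega_R$ that vanishes at every associated prime; moreover $[\varphi(\omega_R)]_{\indeg(\mm)}=\varphi([\omega_R]_{-a_R})$ has $R_0$-dimension $r_0(R)\ge\pd(R)$, so $\embdim\!\bigl(R/\varphi(\omega_R)\bigr)=\embdim(R)-\dim_{R_0}[\varphi(\omega_R)]_{\indeg(\mm)}\le\dim R$. Hence $R$ is Teter, and levelness together with $r(R)=\pd(R)$ then follow from the structure theorem below; $(3)\Rightarrow(4)$ is trivial.

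The core is a structure theorem for standard graded Teter rings, which also yields $(4)\Rightarrow(1)$. Given an injective $\varphi$ witnessing Teterness, set $I:=\varphi(\omega_R)\cong\omega_R$ (up to shift) and $\bar R:=R/I$. Since $\omega_R$ is faithful and maximal Cohen--Macaulay, $I$ has grade $\ge1$, so $\dim\bar R\le\dim R-1$; the depth lemma for $0\to I\to R\to\bar R\to0$ gives $\depth\bar R\ge\dim R-1$, so $\bar R$ is Cohen--Macaulay of dimension $\dim R-1$, and with $\embdim\bar R\le\dim R$ this forces $\codim\bar R\le1$, i.e.\ $\bar R$ is a polynomial ring or a hypersurface. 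A Hilbert-series comparison, using $H_{\omega_R}(t)=(-1)^{\dim R}H_R(t^{-1})$ and the fact that the $h$-polynomial of $\bar R$ is $1+t+\dots+t^{e-1}$, rules out the polynomial-ring case (it would make $R$ regular) and pins down $e=\reg(R)+1$, the symmetry $h_i=h_{\reg(R)+1-i}$ of the $h$-vector of $R$, and $\pd(R)=r_0(R)=h_{\reg(R)}$. Passing to a minimal presentation in which $R\twoheadrightarrow\bar R$ kills $\pd(R)$ of the variables, $I$ is generated by those $\pd(R)$ linear forms together with at most one further element, of degree $\reg(R)+1$, so $\omega_R$ is generated by $\pd(R)$ elements of degree $-a_R$ and at most one of degree $\dim R$. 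But the minimal free resolution of $\omega_R$ over a polynomial ring $P\twoheadrightarrow R$ is dual to that of $R$: the generators of $\omega_R$ correspond to the last free module $F_{\pd(R)}$ of the resolution of $R$, whose twists lie in $[\pd(R)+1,\ \reg(R)+\pd(R)]$, so $\omega_R$ is generated only in degrees $[-a_R,\dim R-1]$. Hence the putative generator in degree $\dim R$ does not exist, $r(R)=\mu(\omega_R)=\pd(R)=r_0(R)$, and $R$ is level. Finally $[\tr(\omega_R)]_{\indeg(\mm)}\supseteq[I]_{\indeg(\mm)}$, which has $R_0$-dimension $\pd(R)\ge1$ and — $I$ being now generated in that single degree and of grade $\ge1$ — meets no minimal prime, hence contains a non-zerodivisor by homogeneous prime avoidance. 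This completes $(4)\Rightarrow(1)$ and the equivalence in the standard graded case.

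I expect the main obstacle to be the last structural step: excluding an extra top-degree generator of $\omega_R$, i.e.\ proving that a standard graded Teter ring is level. The Hilbert-series bookkeeping controls everything except how the minimal generators of $\omega_R$ are distributed in degree, and it is the resolution duality between $R$ and $\omega_R$ — forcing all generators of $\omega_R$ strictly below degree $\dim R$ — that finally closes this gap. A secondary technical point is making the construction of $\varphi$ in $(2)\Rightarrow(3)$ precise, particularly controlling its degree so that $\varphi([\omega_R]_{-a_R})\subseteq R_{\indeg(\mm)}$, and carrying out the prime-avoidance steps with homogeneous prime avoidance so as not to need $R_0$ infinite.
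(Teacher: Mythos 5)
Your overall architecture matches the paper's: prove $(1)\Rightarrow(2)\Rightarrow(3)\Rightarrow(4)$ in general and close the cycle with $(4)\Rightarrow(1)$ in the standard graded case. Your constructions for $(1)\Rightarrow(2)$ (prime avoidance inside $[\omega_R]_{-a_R}$ after deducing generic Gorensteinness from the non-zerodivisor in the trace) and for $(2)\Rightarrow(3)$ (a single injective map $\omega_R\to R$ of degree $a_R+\indeg(\mm)$ extracted from the non-zerodivisor of $[\tr(\omega_R)]_{\indeg(\mm)}$, giving an ideal $I\cong\omega_R$ with $\dim_{R_0}I_{\indeg(\mm)}=r_0(R)\ge\pd(R)$) are essentially the paper's \autoref{rem:kakubeki?} and \autoref{p:Wednesday}.

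The genuine gap is in $(2)\Rightarrow(3)$ (and in the parenthetical of (4)): you need ``$R$ Teter $\Rightarrow r(R)=\pd(R)$'' for an \emph{arbitrary} positively graded Cohen--Macaulay ring, but you derive it from your structure theorem, which you formulate only for \emph{standard graded} Teter rings. That theorem cannot simply be extended: its companion conclusion, levelness, is false outside the standard graded setting (the paper exhibits semi-standard graded Teter rings with $r(R)-r_0(R)$ arbitrarily large, \autoref{thm:examples}(7)), and your proof of it uses standard-gradedness throughout ($h$-vector symmetry, ``$I$ contains $\pd(R)$ linear forms'', the twist bounds on the last syzygy module). The paper fills exactly this hole by localizing at $\mm$ and invoking the local result $r=\embdim-\dim$ for Teter local rings, \cite[Proposition~5.15]{puthenpurakal2025higher} (see \autoref{cor:gradedtype}); without that, or an independent proof of the local statement, your $(2)\Rightarrow(3)$ is incomplete in the non-standard-graded case. (Once $r(R)=\pd(R)$ is known, levelness in (3) does follow from the hypothesis $r_0(R)\ge\pd(R)$ in (2) via $\pd(R)=r(R)\ge r_0(R)\ge\pd(R)$, so the type equality is the only missing piece.) Two smaller issues: your single-degree prime-avoidance steps take place inside one finite-dimensional $R_0$-vector space, where ``homogeneous prime avoidance'' does not apply and a finite $R_0$ genuinely breaks the argument --- you need the standard reduction to $R_0$ infinite, as the paper performs; and in $(4)\Rightarrow(1)$ you start from ``an injective $\varphi$ witnessing Teterness'', whereas the definition also allows $\varphi(\omega_R)=\mm$, and excluding that alternative for $\dim R>0$ is not free (the paper does it in \autoref{rem:3.2}(a) via the theorem that generically Gorenstein rings are not of Teter type).

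Where your proposal genuinely diverges, it is a gain: your structure theorem is a self-contained proof of \autoref{cor:eeyan} and \autoref{rem:mazide?} in the standard graded case, where the paper cites Elias--Takatuji and Puthenpurakal. The Hilbert-series identity $h(t)-t\,h^{\mathrm{rev}}(t)=1-t^{\reg(R)+1}$ correctly forces $h_i=h_{\reg(R)+1-i}$ and $r_0(R)=h_{\reg(R)}=h_1=\pd(R)$, and the duality bound on the twists of $F_{\pd(R)}$ correctly rules out a generator of $\omega_R$ in degree $\dim R$, yielding $\mu(\omega_R)=\pd(R)$ and levelness. But precisely because this argument is confined to the standard graded case, it cannot carry the weight you place on it in step $(2)\Rightarrow(3)$.
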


In this section, we give several applications of \autoref{thm:stdgradedTeterX}.
In \cite{herzog2019trace}, Herzog et al.\ introduced the notion of a \emph{nearly Gorenstein} ring by studying the trace ideal of the canonical module.
While Gorenstein rings are precisely the Cohen--Macaulay rings of type $1$, the type of a nearly Gorenstein ring is often bounded above by the codimension of $R$.

For instance, if $\codim(R)=2$ and $\dim(R)>0$, then \cite[Theorem 2.1]{ficarra2025canonical} implies that $r(R)\le 2$.
When $\codim(R)=3$, it is known that $r(R)\le 3$ for nearly Gorenstein simplicial affine semigroup rings (\cite{jafari2024nearly}).
Moreover, for nearly Gorenstein projective monomial curves one has $r(R)=3$ (\cite{miyashita2023nearly}).
In addition, nearly Gorenstein monomial curves in codimension $3$ satisfy $r(R)\le 3$ (\cite{moscariello2021nearly}).

In \cite[Question~3.7]{stamate2016betti}, Stamate asked whether the type of a nearly Gorenstein numerical semigroup ring admits an upper bound depending only on the embedding dimension (equivalently, on the codimension). Motivated by the above results, we pose the following natural question in a broader setting:

\begin{question}\label{q:intere?}
Let $R$ be a nearly Gorenstein ring. Is its type $r(R)$ bounded above by $\codim(R)$?
In particular, if $R$ is nearly Gorenstein and $\codim(R)=3$, does it follow that $r(R)\le 3$?
\end{question}

By applying \autoref{thm:stdgradedTeterX} together with several known results, we answer \autoref{q:intere?} affirmatively under suitable hypotheses.
Recall the notation $r_{\min}(R):=\dim_{R_0}([\omega_R]_{-a_R})$.

%If $R$ satisfies $\sqrt{[\tr_R(\omega_R)]_1} \supset \mm_R$, then $R$ satisfies $(\natural)$ (see \cite{miyashita2024linear}).
%in the present paper.
%As a corollary of Ficarra's result, we see that the Cohen--Macaulay type of a nearly Gorenstein domain of projective dimension $2$ is at most $2$.
%We consider nearly Gorenstein semi-standard graded domains of projective dimension $3$.
%Recall that $R$ is called {\it perfect} if $\pd(R)=\depth(R)$.

\begin{corollary}[{\autoref{cor:omgCMtype}}]\label{cor:omgCMtypeX}
Assume $R$ is (non-Gorenstein) nearly Gorenstein and that
$R_{\indeg(\mm_R)}$ contains a non-zero divisor of $R$~(e.g., $R$ is a domain,
or a standard graded ring with $\dim(R)>0$).
Then the following hold:
\begin{itemize}
\item[\rm (1)]
If $R$ is level,
then $r(R) \le \pd(R)$;
\item[\rm (2)]
If $r_{\min}(R) \ge \pd(R)$,
then $R$ is level and $r(R)=\pd(R)$;
\item[\rm (3)]
If $R$ is standard graded, $\pd(R)=3$, $\dim(R) \ge 2$ and $r_{\min}(R) \neq 2$, then $R$ is level and $r(R)=3$;
%If $R$ is perfect, then it is level and $r(R)=\pd(R)$;
%\textcolor{red}{Since we do not know such examples with projective dimension at least $4$, this statement might be meaningless...}
%\item[\rm (4)]
%If
%$R$ is a standard graded simplicial affine semigroup ring, $\pd(R)=3$,
%$R$ is non-Gorenstein and nearly Gorenstein with
%then $R$ is level and $r(R)=3$.
\end{itemize}
\end{corollary}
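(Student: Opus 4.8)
The plan is to obtain all three parts from \autoref{thm:stdgradedTeterX} after extracting the consequences of the nearly Gorenstein hypothesis. First I record three facts. Since $R$ is non-Gorenstein and nearly Gorenstein, $\mm\subseteq\tr(\omega_R)\subsetneq R$, and because the only graded ideals containing $\mm$ are $\mm$ and $R$, this forces $\tr(\omega_R)=\mm$. Hence $[\tr(\omega_R)]_{\indeg(\mm)}=R_{\indeg(\mm)}$, which contains a non-zerodivisor by hypothesis, so the clause ``$[\tr(\omega_R)]_{\indeg(\mm)}$ contains a non-zerodivisor'' in conditions (1) and (2) of \autoref{thm:stdgradedTeterX} holds automatically; moreover the existence of a non-zerodivisor gives $\dim R>0$, so the theorem applies. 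Finally, $\tr(\omega_R)=\mm$ has positive height, so $R$ is Gorenstein on the punctured spectrum, in particular generically Gorenstein, and therefore $\omega_R$ is isomorphic, up to a degree shift, to an ideal $I\subseteq R$ containing a non-zerodivisor; under such an isomorphism the minimal-degree component of $I$ corresponds to $[\omega_R]_{-a_R}$. I also use $\pd(R)=\codim(R)$ and $r_0(R)\le r(R)$, with equality iff $R$ is level.

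For (1): if $r(R)<\pd(R)$ there is nothing to prove; if $r(R)\ge\pd(R)$, then levelness together with the facts above makes condition (1) of \autoref{thm:stdgradedTeterX} hold, so by $(1)\Rightarrow(4)$ the ring $R$ is Teter and $r(R)=\codim(R)=\pd(R)$. For (2): I verify condition (2) of \autoref{thm:stdgradedTeterX}. The inequality $r_0(R)\ge\pd(R)$ is the hypothesis and the trace clause is automatic, so the only point is to produce a torsion-free element of $\omega_R$ in degree $-a_R$; this I would get from the ideal $I\cong\omega_R$ above, using the non-zerodivisor in $R_{\indeg(\mm)}$ together with $\tr(\omega_R)=\mm$ and the size bound $r_0(R)\ge\pd(R)$ to locate a non-zerodivisor of $R$ inside the minimal-degree component of $I$. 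Then $(2)\Rightarrow(3)$ gives that $R$ is level with $r(R)=\pd(R)$.

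For (3): now $R$ is standard graded, so $\indeg(\mm)=1$ and the full equivalence of \autoref{thm:stdgradedTeterX} is available; also $\pd(R)=3$ and $\dim R\ge2$. Since $r_0(R)\ge1$ always and $r_0(R)\ne2$, either $r_0(R)\ge3$ or $r_0(R)=1$. In the first case $r_0(R)\ge3=\pd(R)$, and part (2) applies and yields that $R$ is level with $r(R)=3$. So it remains to exclude $r_0(R)=1$, and I would show it forces $R$ to be Gorenstein, contradicting the standing assumption: writing $\omega_R\cong I$ as above, $r_0(R)=1$ means the minimal-degree component $[I]_\delta=R_0x_0$ is one-dimensional; a degree count in $\tr(\omega_R)=I\cdot I^{-1}=\mm$ then gives $R_1=x_0\cdot[I^{-1}]_{1-\delta}$, so $x_0$ divides the non-zerodivisor sitting in $R_1$ and is itself a non-zerodivisor. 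Thus $Rx_0\cong R$ is a free submodule of $I$ with $I/Rx_0$ supported off the minimal primes; applying $\Hom_R(-,\omega_R)$ to $0\to Rx_0\to I\to I/Rx_0\to0$ and using $\Ext^{i}_R(\omega_R,\omega_R)=0$ for $i>0$ and $\Hom_R(I/Rx_0,\omega_R)=0$ identifies $I/Rx_0$ with $\Ext^1_R(I/Rx_0,\omega_R)=\omega_{I/Rx_0}$, so $I/Rx_0$ is a self-dual Cohen--Macaulay module of codimension one. The target is to conclude $I/Rx_0=0$, i.e., $I=(x_0)$ and $R$ is Gorenstein.

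The step I expect to be the main obstacle is exactly this last one. In codimension $2$ it would be immediate from the Hilbert--Burch theorem, but in codimension $3$ there is no such shortcut, and one has to bring in the Buchsbaum--Eisenbud structure theory of the minimal free resolution, together with $\dim R\ge2$ and the constraint $\tr(\omega_R)=\mm$, to rule out a nonzero self-dual quotient $I/Rx_0$. Parts (1) and (2), and the reduction of part (3) to the case $r_0(R)=1$, are by contrast formal consequences of \autoref{thm:stdgradedTeterX} and the three facts recorded above.
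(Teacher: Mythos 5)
Parts (1) and (2) of your proposal follow the same route as the paper: both are read off from \autoref{thm:stdgradedTeterX} (via \autoref{cor:nGfunnyTeter}) after observing that near-Gorensteinness plus non-Gorensteinness forces $\tr(\omega_R)=\mm$, so that the trace clause in conditions (1) and (2) of the theorem is supplied by the hypothesis on $R_{\indeg(\mm_R)}$. That part is fine.

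The genuine gap is in part (3). Your reduction to excluding $r_0(R)=1$ is exactly the paper's, but the exclusion itself is the entire content of the step, and you explicitly leave it unfinished: you arrive at a nonzero self-dual Cohen--Macaulay quotient $I/Rx_0$ and say that ruling it out ``is the main obstacle,'' to be handled by unspecified Buchsbaum--Eisenbud structure theory. The paper does not argue this directly at all; it invokes \cite[Corollary~4.7(1)]{miyashita2025pseudo}, which says that a pseudo-Gorenstein ($r_0(R)=1$) nearly Gorenstein ring under these hypotheses is already Gorenstein. Without that citation or a complete substitute, your proof of (3) does not close. Two further concrete problems with your sketch: the degree count $R_1=x_0\cdot[I^{-1}]_{1-\delta}$ is not justified when $I$ has minimal generators in several degrees (components $[I]_d\cdot[I^{-1}]_{1-d}$ with $d>\delta$ can also contribute to $[\tr(\omega_R)]_1$, and nothing forces $R$ to be level here); and the hypothesis $\dim(R)\ge 2$ must enter essentially --- \autoref{rem:lolol} exhibits a zero-dimensional nearly Gorenstein standard graded ring with $\pd(R)=3$ and $r_0(R)=1$ that is not Gorenstein --- yet your argument gives no mechanism by which the dimension bound is used, which is a strong sign the approach as sketched cannot work.
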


%The conclusions of \autoref{cor:omgCMtypeX} remain valid under weaker hypotheses on the canonical trace (\autoref{cor:nGfunnyTeter}). 
In \cite[Section 5]{puthenpurakal2025higher}, it is shown that several classes of rings, including rings of finite Cohen--Macaulay type and one-dimensional rings of minimal multiplicity, are Teter in the higher-dimensional sense.
As an application of \autoref{thm:stdgradedTeterX}, we show that various standard graded nearly Gorenstein rings are Teter.
Furthermore, we show that the Teter property can be constructed and is preserved under operations such as fiber products and certain Veronese subalgebras.

%For a standard graded ring $A$, we denote the \emph{$h$-vector} of $A$ by $h(A)=(h_0,\ldots,h_{s(A)})$ with $h_{s(A)}\neq 0$.

%~(つまり$R$がnearly Gorenstein in the sense of \cite{herzog2019trace})におきかえた条件を考え
%を(1)'とする.
%$R$が標準的次数付き環のとき,
%$\dim(R)>0$のとき(1)'を満たすならば
%((1)を満たすので,\autoref{thm:stdgradedTeterX}より)
%$R$はTeterであることが\autoref{thm:stdgradedTeterX}からわかる.
%$\dim(R)=0$のとき,
%$R$がTeterならば(1)'を満たすことも実はわかるが,
%(1)'を満た$R$はTeterは一般にいえない.

%特に$R$が標準的次数付きnearly Gorenstein環で$r(R)=\pd(R)$を満たすならば$R$はTeterであることが証明できる.

\begin{corollary}[{See a part of \autoref{thm:examples} and \autoref{thm:Fiber2} and \autoref{thm:Veronese2}}]\label{thm:examplesX}
The following hold:
\begin{enumerate}
\item[\rm (a)]
Assume that $R$ is a standard graded nearly Gorenstein.
Then $R$ is Teter, if it satisfies one of the following conditions:
\begin{enumerate}[\rm (1)]
\item \(R\) is a level semi-standard graded ring with \(r(R)=\pd(R)\) and $\dim(R)>0$;
\item \(R\) is an affine semigroup ring with $\pd(R) \le 3$ and $\dim(R)=2$;
\item \(R\) has minimal multiplicity;
\item \(R\) is a Stanley--Reisner ring;
%\item \(R\) is a domain whose $h$-vector is $h(R)=(1,a,a)$ for some $a>0$.
%\item[\rm (4)] \(R\) is a semi-standard graded nearly Gorenstein affine semigroup ring of codimension \(2\);
\end{enumerate}
\item[\rm (b)]
Let $A$ and $B$ be 1-dimensional Cohen--Macaulay generically Gorenstein standard graded rings, at least one of which is not regular. 
Then the fiber product $A \times_\kk B$ (see \autoref{def:fiberrrrrrrrrr})is Teter if and only if both $A$ and $B$ have minimal multiplicity.
\item[\rm (c)] Assume $R$ is a (non-Gorenstein) generically Gorenstein standard graded ring with minimal multiplicity.
       If either $\dim(R)=1$ or $\tr(\omega_R)=\mm$ with $\dim(R)=2$, then $R^{(k)}$ is Teter for every $k\ge 2$.
\end{enumerate}
\end{corollary}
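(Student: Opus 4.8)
The plan is to derive all three parts from the main theorem, \autoref{thm:stdgradedTeterX}, using that in the standard graded case a ring of positive dimension is Teter exactly when it is level, $r(R)\ge\pd(R)$, and $[\tr(\omega_R)]_{\indeg(\mm)}=[\tr(\omega_R)]_1$ contains a non-zerodivisor. When $R$ is moreover nearly Gorenstein we have $[\tr(\omega_R)]_1=R_1$, and $R_1$ contains a non-zerodivisor as soon as $R$ is standard graded Cohen--Macaulay of positive dimension (the parenthetical in \autoref{cor:omgCMtypeX}). So for part (a) the task reduces, whenever $\dim R>0$, to checking that $R$ is level with $r(R)=\pd(R)$, with the Artinian subcases treated by hand.

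For part (a): Case (1) is immediate from \autoref{thm:stdgradedTeterX}(1) (the non-zerodivisor condition holds as above, also in the stated semi-standard generality). For Case (4), I would use that a standard graded Cohen--Macaulay ring of minimal multiplicity has a $2$-linear defining ideal ($\reg R=1$), so the top free module of its minimal resolution over the polynomial ring is $S(-\pd(R)-1)^{\beta}$, concentrated in one degree; hence $\omega_R$ is generated in one degree (so $R$ is level), and comparing the $K$-polynomial $(1+\pd(R)\,t)(1-t)^{\pd(R)}$ with that resolution gives $\beta=r(R)=\pd(R)$. If $\dim R=0$ then $R=S/\mm_S^2$ and the isomorphism $\omega_R/\soc(\omega_R)\cong\mm_R$ produces a surjection $\omega_R\twoheadrightarrow\mm_R$, so $R$ is Teter by \autoref{def:maybenice}. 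Case (2) uses the Hilbert--Burch theorem together with the explicit codimension-two formula for $\tr(\omega_R)$ (\cite{ficarra2024canonical!,ficarra2025canonical}), which force a nearly Gorenstein such ring to be level with $r(R)=2=\pd(R)$. For Case (3): when $\pd(R)\le 2$ one reduces to (2) or to the Gorenstein case, and when $\pd(R)=3$ one applies \autoref{cor:omgCMtypeX}(3), the input $r_0(R)\ne 2$ being supplied by the combinatorics of nearly Gorenstein codimension-three (simplicial) affine semigroup rings (\cite{jafari2024nearly}). Case (5) is \cite[Theorem~A]{miyashita2025canonical}: a non-Gorenstein nearly Gorenstein Stanley--Reisner ring of codimension $n$ is level with $r(R)=n=\pd(R)$ and automatically has positive dimension.

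For part (b), write $T:=A\times_\kk B$. The first step is structural: from the Mayer--Vietoris sequence $0\to T\to A\oplus B\to\kk\to 0$ and graded local duality one obtains that $T$ is Cohen--Macaulay, standard graded and generically Gorenstein of dimension $1$, with $\embdim T=\embdim A+\embdim B$ (so $\pd T=\codim A+\codim B+1$), $e(T)=e(A)+e(B)$, an exact sequence $0\to\omega_A\oplus\omega_B\to\omega_T\to\kk\to 0$ (so $r(T)=r(A)+r(B)+1$), and a description of $\tr(\omega_T)$ in terms of $\tr(\omega_A)$ and $\tr(\omega_B)$. Since $e(M)\ge\codim M+1$ always, the formulas for $e(T)$ and $\pd T$ give that $T$ has minimal multiplicity if and only if both $A$ and $B$ do. If both do, then $T$ is generically Gorenstein, standard graded, of dimension $1$, with minimal multiplicity, hence Teter by \cite[Theorem~1.4]{puthenpurakal2025higher}. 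Conversely, if $T$ is Teter, \autoref{thm:stdgradedTeterX} forces $T$ to be level, $r(T)=\codim T$, and $[\tr(\omega_T)]_1$ to contain a non-zerodivisor; transporting these three conditions through the fiber-product descriptions of $\omega_T$ and $\tr(\omega_T)$ — using also that any element of $T_1$ lying in $\mm_A$ or in $\mm_B$ is a zerodivisor of $T$ — pins down $e(A)=\codim A+1$ and $e(B)=\codim B+1$.

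For part (c), let $R^{(k)}$ be the $k$-th Veronese subring; it is Cohen--Macaulay, standard graded and generically Gorenstein of dimension $\dim R$. A Hilbert-series computation shows that when $\dim R\le 2$ and $R$ has minimal multiplicity, $R^{(k)}$ again has minimal multiplicity (its $h$-vector stays of the form $(1,\star)$), so by the Case (4) argument $R^{(k)}$ is level with $r(R^{(k)})=\pd(R^{(k)})$. If $\dim R=1$ this already gives Teterness by \cite[Theorem~1.4]{puthenpurakal2025higher}. If $\dim R=2$, I would show the hypothesis $\tr(\omega_R)=\mm$ propagates — via the relation between $\tr(\omega_{R^{(k)}})$ and $\tr(\omega_R)$ under the Veronese operation — to $\tr(\omega_{R^{(k)}})\supseteq\mm_{R^{(k)}}$ for every $k\ge 2$; then $R^{(k)}$ is nearly Gorenstein, standard graded, of positive dimension, with minimal multiplicity, and Case (4) applies. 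The main obstacle in (b) and (c) is precisely the behaviour of the canonical trace under these two operations; the sharpest point is establishing $\tr(\omega_{R^{(k)}})\supseteq\mm_{R^{(k)}}$ for all $k\ge 2$ rather than only for $k\gg 0$, which is where the extra hypotheses in dimension $2$ are indispensable, while within (a) the delicate step is the combinatorial verification of $r_0(R)\ne 2$ in the semigroup case.
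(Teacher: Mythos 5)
Your overall strategy --- reduce every part to the standard--graded Teter criterion of \autoref{thm:stdgradedTeterX} (level, $r(R)=\pd(R)$, and a degree-one non-zerodivisor in $\tr(\omega_R)$, the last being automatic for nearly Gorenstein standard graded rings of positive dimension) --- is exactly the paper's, and most of your steps match or improve on its proof. Two points deserve comment, one of which is a genuine gap.

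The gap is in (a)(3), precisely at the step you yourself flag as delicate. To apply \autoref{cor:omgCMtypeX}(3) you must rule out $r_0(R)=2$, and you attribute this to \cite{jafari2024nearly}; but that reference gives only the bound $r(R)\le 3$ for nearly Gorenstein simplicial affine semigroup rings of codimension $3$, which says nothing about $r_0$. If $r_0(R)=2$ were possible, one could have $r(R)=2<3=\pd(R)$ and the ring would \emph{fail} to be Teter, so this case genuinely must be excluded. The paper avoids the issue by citing \cite[Theorem A]{miyashita2023nearly}, which states directly that a non-Gorenstein nearly Gorenstein projective monomial curve (equivalently, a $2$-dimensional standard graded affine semigroup ring) is level of type exactly $3$; that is the input your argument is missing.

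Elsewhere your route differs from the paper's in ways that are legitimate and sometimes cleaner. In (b), for the ``if'' direction you apply the one-dimensional minimal-multiplicity criterion directly to $T=A\times_\kk B$ after computing $e(T)=e(A)+e(B)$ and $\embdim(T)=\embdim(A)+\embdim(B)$; the paper instead verifies the three conditions of \autoref{cor:stdgradedTeter} through \autoref{lem:fiberrrrrr}. Your ``only if'' direction is only a sketch: the concrete mechanism (which the paper supplies) is that levelness of $T$, transported through the exact sequence $0\to\omega_A\oplus\omega_B\to\omega_T\to\kk\to 0$, forces $a_A=a_B=0$, which for one-dimensional standard graded rings is minimal multiplicity. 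In (c) you prove the stronger statement that $R^{(k)}$ is nearly Gorenstein (i.e.\ \autoref{lem:Yeah} applied with $I=\mm$), whereas the paper only needs a single degree-one non-zerodivisor in $\tr(\omega_{R^{(k)}})$ ($I=(f)$); both work. Your explicit treatment of the Artinian case of (a)(4) via $R=S/\mm_S^2$ and $\omega_R/\soc(\omega_R)\cong\mm_R$ is correct and is actually more careful than the paper, which leaves that case implicit.
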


While standard graded Teter rings satisfy $r(R) = r_{\min}(R)$ by \autoref{thm:stdgradedTeterX}, this equality fails for semi-standard graded rings. Specifically, for any $a \in \mathbb{Z}_{>0}$, there exists a nearly Gorenstein Teter ring $R$ such that $r(R) - r_{\min}(R) = a$ (\autoref{thm:examples}).

%Considering non-standard graded rings in this way, we encounter complex Teter rings whose Teter property cannot be determined by applying \autoref{thm:stdgradedTeter}.
Finally, we investigate the Teter property of numerical semigroup rings, which are a typical class of rings that are not even semi-standard graded. As a main result of this section, we show that the Teter property of numerical semigroup rings can be characterized by using the minimal set of generators and invariants determined by the numerical semigroup called pseudo-Frobenius numbers (see \autoref{thm:numeOZAKI}).
Using this characterization, we obtain the following assertions for the cases where the numerical semigroup has minimal multiplicity or the codimension is $2$
(see \autoref{cor:minimalnumefunny} and \autoref{propprop:numenume}).
%First, in the case of codimension $2$, we clarify the relationship between almost Gorenstein rings and nearly Gorenstein rings~(see \autoref{propprop:numenume}).
\begin{comment}
\begin{proposition}\label{propprop:numenume}
    Let $H=\langle n_1,n_2,n_3\rangle$ be a non-symmetric numerical semigroup.
    %of embedding dimension 3.
    The following hold:
    \begin{enumerate}[\rm (1)]
        \item If $R_H$ is nearly Gorenstein but not almost Gorenstein, then $R_H$ is Teter;
        \item When $R_H$ is almost Gorenstein, the Hilbert-Burch matrix of $R_H$ can be expressed as
        $A=
        \begin{pmatrix}
            x_1^a & x_2^b & x_3^c\\
            x_2 & x_3 & x_1
        \end{pmatrix}$
        where a,b,c is positive integers.
        In this case, $R_H$ is Teter if and only if $\min\{a,b,c\}=1$.
    \end{enumerate}
\end{proposition}
\end{comment}
Furthermore, we obtain the following result analogous to \autoref{cor:mmOMG} (see \autoref{prop:cor:minimalnumefunny1}).

%\begin{proposition}
%    If $H$ has minimal multiplicity and $[\tr(\omega_{R_H})]_{a_1}\not= (0)$, then $R_H$ is Teter ring.
%\end{proposition}

\subsection*{Outline}
This paper is organized as follows. 
\autoref{section2} reviews the basics of Cohen--Macaulay graded rings, trace ideals, and numerical semigroup rings. 
In \autoref{sect3}, we present our main result on the Teter property for (semi-)standard graded rings. 
\autoref{sect4} explores the relationship between nearly Gorenstein and Teter rings, establishing sufficient conditions. 
\autoref{sect5} examines how the Teter property behaves under ring operations, specifically fiber products and Veronese subalgebras. 
Finally, \autoref{sect6} provides a characterization of the Teter property for numerical semigroup rings.

\begin{setup}\label{setup1}
Throughout this paper,
we denote the set of non-negative integers by $\NN$.
Let $R=\bigoplus_{i \in \NN} R_i$ be a positively graded ring. Unless otherwise stated, we assume that $R_0$ is a field. Hence, $R$ has the unique graded maximal ideal given by $\MM_R:=\bigoplus_{i \in \NN \setminus \{0\}} R_i$.
%We denote the residue field by $\kk=R/\MM_R$.
Let $M$ and $N$ be graded $R$-modules
and let $k \in \ZZ$.
{Let $M(k)$ denote the graded $R$-module having the same underlying $R$-module structure as $M$,
where $[M(k)]_n = [M]_{n+k}$ for all $n \in \ZZ$.}
Moreover, let ${}^*\Hom_R(M, N)$ denote the graded $R$-module consisting of graded homomorphisms from $M$ to $N$.
Moreover, we set $M^{\vee}:={}^*\Hom_R(M,R)$.
Let \( \mathcal{S} \subset R \) be the multiplicative set of homogeneous non-zero divisors. 
The \emph{graded total quotient ring} \( {}^*Q(R) \) is defined as the localization
${}^*Q(R) := \mathcal{S}^{-1}R$.
Then \( Q(R) \) inherits a natural \( \mathbb{Z} \)-grading, with graded components given by
$[Q(R)]_i := \left\{ \frac{r}{s} \in Q(R) : r, s \in R\text{\;are homogeneous},\ \deg(r) - \deg(s) = i \right\}$ for $i \in \mathbb{Z}$.
%Note that ${}^*\Hom_R(M, N) = \bigoplus_{i \in \mathbb{Z}} \Hom^i_R(M, N)$, where\[\Hom^i_R(M, N) := \{ \varphi \in \Hom_R(M, N) \mid \varphi(M_k) \subset N_{k+i} \text{ for all } k \in \mathbb{Z} \}.\]
%\textcolor{blue}{Moreover,
%we denote $M_{(\pp)}$ by the localization of the set of every homogeneous elements of $R \setminus \pp$ for a graded prime ideal $\pp$ of $R$.}
Suppose that $R$ is Cohen--Macaulay. %and that $M$ is a non-zero finitely generated graded $R$-module.
%Let $I$ be a graded ideal of $R$ such that $I \neq R$.
%Let $\grade_R(I)$ denote the {\it grade} of $I$ on $R$,
%that is,
%the common length of the maximal $R$-sequence in $I$~(see \cite{bruns1998cohen}).
Let $\omega_R=\bigoplus_{i \in \ZZ} [\omega_R]_i$ denote the graded canonical module of $R$ (see \autoref{defcanon}). Let $a_R$ denote the {\it $a$-invariant} of $R$, that is, 
\[
a_R := - \min \{ i \in \mathbb{Z} : [\omega_R]_i \neq 0 \}.
\]
\end{setup}

\section{Preliminaries}\label{section2}
The purpose of this section is to lay the groundwork for the proof of our main result.
Throughout, we work under \autoref{setup1} in all subsections.
%Furthermore, we always assume that $R$ is Cohen--Macaulay.
%Throughout this section, unless otherwise stated, we maintain \autoref{setup1}.

%For a $\ZZ$-graded $R$-module $M$ Let $\tr_R(M)$ denote the sum of ideals $\varphi(M)$
%with $\varphi \in \Hom^*_R(M,R):=\bigoplus_{i \in \ZZ}\Hom^i_R(M,R)$
%(Note that $\Hom^*_R(M,R)=\Hom_R(M,R)$ if $M$ is finitely generated).
%Namely,
%\[\tr_R(M)=\sum_{\varphi \in \Hom^*_R(M,R)}\varphi(M).\]

%\subsection{Trace ideals}

%Let $R$ be a Noetherian graded ring of Krull dimension $d$.

\subsection{Canonical modules over Cohen--Macaulay graded rings}
Let us recall the definition of the canonical module over a Cohen--Macaulay positively graded ring.
%We maintain \autoref{setup1}.
Throughout this subsection, we retain \autoref{setup1}, and we further assume that \( R \) is Cohen--Macaulay.
%Let $\omega_R$ denote the graded canonical module of $R$.
Let us recall the definition of the canonical module.

\begin{definition}{\cite[Definition (2.1.2)]{goto1978graded}}\label{defcanon}
%Set $d=\dim (R)$.
The finitely generated graded $R$-module
\[
\omega_R:={}^*\Hom_R(\mathrm{H}_{\MM_R}^{\dim(R)}(R), E_R)
\]
is called the {\it canonical module}, where $\mathrm{H}_{\MM_R}^{\dim(R)}(R)$ denotes the $d$-th local cohomology and $E_R$ denotes the injective envelope of $\kk$.
\end{definition}

%\begin{definition}[{\cite[Definition (2.1.2)]{goto1978graded}}]\label{defcanon}
%Let $d=\dim (R)$. The finitely generated graded $R$-module
%$\omega_R:={}^*\Hom_R({}^*\mathrm{H}_{\MM_R}^{d}(R), E_R)$
%is called the {\it (graded) canonical module}, where ${}^*\mathrm{H}_{\MM_R}^{d}(R)$ denotes the graded $d$-th local cohomology and $E_R$ denotes the graded injective envelope of $R_0$.
%\end{definition}

%\begin{definition}\label{def:qG}
%A ring \( R \) is said to be \emph{quasi-Gorenstein} if \( \omega_R \) is isomorphic to \( R \) as a graded \( R \)-module (up to degree shift).
%\end{definition}
%\begin{remark}
%A ring $R$ is Gorenstein if and only if it is Cohen–Macaulay and quasi-Gorenstein.
%\end{remark}

\begin{definition}
In this paper, for simplicity of notation, we denote the {\it (Cohen--Macaulay) type} of $R$ by $r(R)$, and set
$r_{\min}(R):=\dim_{R_0}([\omega_R]_{-a_R})$.
Note that $r_{\min}(R) \le r(R)$ always holds.
$R$ is called {\it level}~(\cite{stanley2007combinatorics}) if
$\omega_R$ is generated in a single degree;
%as a graded $R$-module;
equivalently, $r(R)=r_{\min}(R)$ (see, e.g., \cite
[Proposition 3.2]{stanley2007combinatorics}).
Moreover,
following the terminology of \cite{ene2015pseudo},
we say that $R$ is {\it pseudo-Gorenstein} if
$r_{\min}(R)=1$.
\end{definition}

\subsection{Semi-standard graded rings, Hilbert series, and almost Gorenstein graded rings}

Throughout this subsection we keep \autoref{setup1} in force, and in addition assume that $R$ is Noetherian.

\begin{definition}
We say that $R$ is {\it standard graded} if $R=R_0[R_1]$, i.e., if $R$ is generated by $R_1$ as an $R_0$-algebra.
We say that $R$ is {\it semi-standard graded} if $R$ is finitely generated as an $R_0[R_1]$-module.
\end{definition}

\begin{remark}
The notion of semi-standard graded rings, which extends standard graded rings, appears naturally here from the viewpoint of combinatorial commutative algebra.
Typical examples include the \emph{Ehrhart rings} of lattice polytopes and the \emph{face rings} of simplicial posets (see \cite{stanley1991f}).
\end{remark}

Recall that when $R$ is semi-standard graded, its Hilbert series
\[
\Hilb(R,t):=\sum_{i=0}^\infty \dim_{R_0}(R_i)\,t^i
\]
has the form
\[
\Hilb(R,t)=\frac{h_0+h_1 t+\cdots+h_s t^s}{(1-t)^{\dim(R)}},
\]
where $h_i\in\mathbb{Z}$ for all $0\le i\le s$, $h_s\ne 0$, and $\sum_{i=0}^s h_i\ne 0$.
The integer $s$, called the {\it socle degree} of $R$, is denoted by $s(R)$.
The sequence $h(R)=(h_0,h_1,\ldots,h_{s(R)})$ is the {\it $h$-vector} of $R$.

\begin{remark}
Assume that $R$ is semi-standard graded, and let $h(R)=(h_0,h_1,\ldots,h_{s(R)})$ be its $h$-vector.
We always have $h_0=1$ since $h_0=\Hilb(R,0)=\dim_{R_0}(R_0)=1$.
Moreover, if $R$ is Cohen--Macaulay, its $h$-vector satisfies $h_{s(R)}=r_{\min}(R)$ and $h_i\ge 0$ for every $0\le i\le s(R)$.
For further background on $h$-vectors of Noetherian graded rings, see \cite{stanley1978hilbert,stanley2007combinatorics}.
\end{remark}

\begin{definition}
Assume that $R$ is Cohen--Macaulay.
$R$ is called {\it almost Gorenstein}~(\cite[Section 10]{goto2015almost}) if there exists a degree-$0$ graded $R$-monomorphism
$\varphi\colon R\hookrightarrow \omega_R(-a_R)$ such that $C:=\cok(\varphi)$ is either the zero module or an Ulrich $R$-module,
i.e., $C$ is Cohen--Macaulay and $\mu(C)=e(C)$.
Here, $\mu(C)$ (resp.\ $e(C)$) denotes the minimal number of generators of $C$ (resp.\ the multiplicity of $C$ with respect to $\MM_R$).
\end{definition}

%\begin{theorem}[{\cite[Theorem 4.7]{higashitani2016almost}}]\label{WOW}
%Suppose that $R$ is a Cohen--Macaulay standard graded domain
%and $R_0$ is an algebraically closed field with characteristic 0.
%If $R$ is almost Gorenstein,
%then it is pseudo-Gorenstein,
%or it has a minimal multiplicity.
%\begin{proof}
%Let $h(R)=(h_0,\cdots,h_{s(R)})$ be the $h$-vector of $R$.
%When $R_0$ is an infinite field, observe that $R$ is pseudo-Gorenstein if and only if $h_{s(R)} = 1$, and that $R$ has minimal multiplicity if and only if $s(R) \le 1$.  
%In light of this, the statement is merely a reformulation of \cite[Theorem 4.7]{higashitani2016almost}.
%\end{proof}
%\end{theorem}

\subsection{Trace ideals over graded rings}\label{lastsubsec}
Throughout this subsection, we retain \autoref{setup1}, and assume that \( R \) is Noetherian.
Let $M:=\bigoplus_{i \in \ZZ} M_i$ be a (not necessarily finitely generated) $\ZZ$-graded $R$-module.

\begin{definition}\label{def:GRADEDcanon}
The sum of all images of graded homomorphisms $\varphi \in {}^*\Hom_R(M,R)$ is called the {\it trace ideal} of $M$:
\[
\tr_R(M):=\sum_{\varphi \in {}^*\Hom_R(M,R)}\varphi(M).
\] 
When there is no risk of confusion about the ring, we simply write $\tr(M)$.
\end{definition}

\begin{comment}
\begin{remark}[{\cite[Remark 2.2]{kumashiro2025canonical}}]
%Let $M$ be a  graded $R$-module.
Since $R$ is Noetherian,
we have
$$\tr_R(M)=\sum_{\varphi \in \Hom_R(M,R)} \varphi(M).$$
\end{remark}
\end{comment}

\begin{remark}
Let $I$ be a graded ideal of $R$ containing
a non-zero divisor of $R$.
Set
$$\displaystyle I^{-1}:=\{x \in {}^*Q(R):xI\subset R\}.$$
%where $Q(R)$ is the total quotient ring of fractions of $R$.
Then we have $\tr_R(I)=I \cdot I^{-1}$
(see the proof of \cite[Lemma 1.1]{herzog2019trace}).
\end{remark}

\begin{remark}\label{rem:gradedaoyamagoto}
Assume $R$ is Cohen--Macaulay and let $\omega_R$ be the canonical module of $R$.
It is known that \(\tr_R(\omega_R)\) describes the non-Gorenstein locus~(see \cite[Lemma 2.1]{herzog2019trace}).
In particular,
$R$ is Gorenstein if and only if $\tr_R(\omega_R)=R$.
%\begin{proof}
%It follows from \cite[Remark 2.9]{kumashiromiyashita2025}.
%\end{proof}
\end{remark}

\begin{definition}[{\cite[Definition 2.2]{herzog2019trace}}]
Assume that $R$ is Cohen--Macaulay.
$R$ is called \textit{nearly Gorenstein} if
$\tr_R(\omega_R) \supset {\MM_R}$.
\end{definition}

At the end of this section, we recall the definition of the Veronese subalgebra and present a lemma concerning its relation to the trace ideal of the canonical module.

\begin{definition}
Let \( k > 0 \) be a positive integer. For \( R = \bigoplus_{i \in \NN} R_i \), the \textit{\( k \)-th Veronese subalgebra} of $R$ is defined by
$R^{(k)} := \bigoplus_{i \in \NN} R_{ik}$.
Notice that the grading on \( R^{(k)} \) is given by \( [R^{(k)}]_i = R_{ik} \) for each \( i \in \NN \).  
%Hence, \( R^{(k)} \) is standard graded if and only if \( R^{(k)} = {R_0}[R_k] \).  
Similarly, we define its \textit{\( k \)-th Veronese submodule} of $M$ by
$M^{(k)} := \bigoplus_{i \in \ZZ} M_{ik}$.
\end{definition}

\begin{definition}
Let ${}^*Q(R)$ denote the graded total quotient ring
(see \autoref{setup1}).
{\it A graded fractional ideal} of 
$R$ is a finitely generated graded
$R$-submodule $J \subset {}^*Q(R)$
that contains a homogeneous non-zero divisor of $R$.
For a graded fractional ideal $J$ of $R$, we set
$J^{-1}=\{a \in {}^*Q(R) : aJ \subset R\}$.
\end{definition}

\begin{lemma}[{\cite[Theorem 3.4]{miyashita2024linear}}]\label{lem:Yeah}
Let $\fkn = (R_1)R$. Let $I$ be a graded ideal generated by a subset of $R_1$, and let $J$ be a graded fractional ideal of $R$. Suppose that there exists a homogeneous non-zero divisor $y \in R_1$.
If
$\tr_{R}(J) \supset I$,
then we have $\tr_{R ^{(k)}}(J ^{(k)}) \supset (\fkn^{k-1}I) ^{(k)}$ for any $k>0$.
\end{lemma}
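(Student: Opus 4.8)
The plan is to unravel the definitions and then produce, for each fractional ideal $J^{(k)}$, enough explicit graded homomorphisms to $R^{(k)}$ whose images cover $(\fkn^{k-1}I)^{(k)}$. First I would recall that by \autoref{def:GRADEDcanon} the hypothesis $\tr_R(J)\supset I$ means that for every homogeneous element $a\in I$ there are finitely many graded maps $\varphi_j\colon J\to R$ and elements $m_j\in J$ with $a=\sum_j\varphi_j(m_j)$; since $I$ is generated by a subset of $R_1$, it suffices to treat $a\in I\cap R_1$. The key observation is that a graded $R$-homomorphism $\varphi\colon J\to R$ does \emph{not} directly restrict to an $R^{(k)}$-homomorphism $J^{(k)}\to R^{(k)}$, because $\varphi$ shifts degrees by a fixed amount that need not be a multiple of $k$; the device to fix this is to precompose (or postcompose) with multiplication by a suitable power of the non-zerodivisor $y\in R_1$ so that the total degree shift lands in $k\ZZ$.

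The main step is the following construction. Fix $a\in I\cap R_1$ and write $a=\sum_j\varphi_j(m_j)$ with $\varphi_j\colon J\to R$ graded. For the target element I take $y^{k-1}z\cdot a\in (\fkn^{k-1}I)_k$, where $z$ is an arbitrary monomial of degree $k-1$ in the generators of $\fkn=(R_1)R$; these products span $(\fkn^{k-1}I)^{(k)}$ in internal degree $k$, and in higher internal degrees one multiplies further by elements of $R_k=[R^{(k)}]_1$. Now consider the map $\psi_j\colon J^{(k)}\to R^{(k)}$ defined by $\psi_j(w)=y^{k-1}z\,\varphi_j(w)$ for $w\in J^{(k)}$. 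One checks that $\varphi_j$ sends $[J]_n$ into $[R]_{n+d_j}$ for a fixed $d_j$; but $\varphi_j(m_j)\in R_1$ forces, via homogeneity of the sum and the fact that $m_j$ may be taken homogeneous, the relevant degree bookkeeping to work out so that $y^{k-1}z\,\varphi_j(\cdot)$ carries $[J^{(k)}]_i=[J]_{ik}$ into $[R]_{ik+\,(k-1)+(k-1)+d_j}$, and one arranges $d_j$ together with the $y$-power so this is a multiple of $k$. (Concretely: $\varphi_j(m_j)$ homogeneous of degree $1$ means $\deg m_j+d_j=1$; multiplying by $y^{\,k-1-d_j'}$ for the appropriate correction makes everything sit in $R^{(k)}$. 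The precise exponent is a routine computation I would carry out by reducing $d_j$ modulo $k$.) Then $\psi_j$ is $R^{(k)}$-linear because $\varphi_j$ is $R$-linear and $R^{(k)}\subset R$, and $\sum_j\psi_j(m_j\cdot(\text{appropriate }y\text{-power}))=y^{k-1}z\,a$, exhibiting $y^{k-1}z\,a\in\tr_{R^{(k)}}(J^{(k)})$.

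Having covered internal degree $k$, I would then note that $\tr_{R^{(k)}}(J^{(k)})$ is an ideal of $R^{(k)}$, and $[R^{(k)}]_1=R_k$, so once all the degree-$k$ generators $y^{k-1}z\,a$ of $(\fkn^{k-1}I)^{(k)}$ lie in the trace, the whole ideal $(\fkn^{k-1}I)^{(k)}$ does too. This uses that $\fkn^{k-1}I$ is generated in internal degree $k$ over $R$ by the products $z\cdot a$ with $\deg z=k-1$, $\deg a=1$, together with the factor $y^{k-1}$ being invertible "up to $y$-multiplication" — more precisely, since $y\in R_1$ is a non-zerodivisor, multiplication by $y^{k-1}$ is injective, and the point is not to recover $z\cdot a$ but to see that the submodule generated by the $y^{k-1}z\,a$ over $R^{(k)}$ equals $(\fkn^{k-1}I)^{(k)}$; here I would invoke that $y^{k}\in[R^{(k)}]_1$ is a unit-like factor allowing one to cancel $y^{k-1}$ against $y^{k}$-multiples, or simply check containment of generators directly.

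The step I expect to be the main obstacle is the degree bookkeeping that guarantees the modified maps $\psi_j$ genuinely land in $R^{(k)}$ (equivalently, that their degree shifts are divisible by $k$) \emph{simultaneously} with the images summing to the desired generator — this is exactly where the hypothesis $y\in R_1$ a non-zerodivisor is essential, since one needs to multiply by controlled powers of $y$ to absorb the residues $d_j\bmod k$ without destroying $R$-linearity or collapsing images. Everything else — that traces are ideals, that $\fkn^{k-1}I$ is generated in degree $k$, that $R^{(k)}$-linearity follows from $R$-linearity — is routine.
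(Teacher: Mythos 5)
The paper does not prove this lemma; it is quoted from \cite[Theorem 6.1]{miyashita2024linear}, so there is no in-paper proof to compare against. Judged on its own terms, your proposal has the right general shape (reduce to the degree-$k$ generators $za$ with $z$ a product of $k-1$ elements of $R_1$ and $a\in I\cap R_1$, then repair the degree shifts of the maps $\varphi_j$ and of the arguments $m_j$ so that everything lives over $R^{(k)}$), but the repair mechanism you chose does not work. First, a degree error: $y^{k-1}z\,a$ has degree $(k-1)+(k-1)+1=2k-1$, so it does not lie in $(\fkn^{k-1}I)_k$, nor in any graded piece of $R^{(k)}$ for $k\ge 2$; your stated target elements cannot be images of maps into $R^{(k)}$. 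Second, and more fundamentally: if $\varphi_j$ has degree $d_j$, then to make the modified map $y^{s_j}z\,\varphi_j(\cdot)$ land in $R^{(k)}$ you need $s_j\equiv 1-d_j \pmod k$, and to make the modified argument $y^{t_j}m_j$ lie in $J^{(k)}$ you need $t_j\equiv d_j-1\pmod k$; hence $s_j+t_j\equiv 0\pmod k$, and with nonnegative exponents $s_j+t_j$ equals $0$ only when $d_j\equiv 1\pmod k$ and equals $k$ otherwise. So after forcing the contributions to be comparable, your construction produces $y^{k}z\,a$, not $z\,a$, i.e.\ it proves only $y^{k}\cdot(\fkn^{k-1}I)^{(k)}\subset \tr_{R^{(k)}}(J^{(k)})$. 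Your final step --- treating $y^{k}\in[R^{(k)}]_1$ as ``unit-like'' and cancelling it --- is invalid: $y^k$ is a non-zerodivisor, not a unit, and $y^k u\in\fka$ does not imply $u\in\fka$ for an ideal $\fka$ (take $\fka=(y^k)$). This is a genuine gap, not a routine computation to be filled in.

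The fix is to use the factors of $z$ itself, rather than extra powers of $y$, as the degree-adjusting currency. Write $z=z_1\cdots z_{k-1}$ with $z_i\in R_1$, and for each $j$ let $c_j\in\{0,\dots,k-1\}$ with $c_j\equiv d_j-1\pmod k$. Put $w_j:=z_1\cdots z_{c_j}m_j\in J^{(k)}$ (its degree is $c_j+(1-d_j)\equiv 0\pmod k$) and $\psi_j:=z_{c_j+1}\cdots z_{k-1}\cdot\varphi_j$, which is a graded $R^{(k)}$-homomorphism $J^{(k)}\to R^{(k)}$ of degree $(k-1-c_j)+d_j\equiv 0\pmod k$. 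By $R$-linearity, $\sum_j\psi_j(w_j)=z_1\cdots z_{k-1}\sum_j\varphi_j(m_j)=z\,a$ exactly, with no spurious factor, and since $(\fkn^{k-1}I)^{(k)}$ is generated over $R^{(k)}$ by such elements $z\,a$, the containment follows. The point is that $\deg z+\deg m_j+d_j=k$ on the nose, so the $k-1$ linear factors of $z$ can always be distributed between argument and homomorphism to kill both residues simultaneously --- whereas padding with $y$ can only add, never subtract, and therefore overshoots by a multiple of $k$.
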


The next result is a generalization of \cite[Proposition~3.7]{miyashita2025canonical}.
%to arbitrary Noetherian graded rings.
Its proof is essentially the same as that of \cite[Proposition~3.7]{miyashita2025canonical}, but we include it for the reader's convenience.

\begin{proposition}\label{p:Wednesday}
Let $R$ be a Noetherian graded ring and let $J \subset {}^*Q(R)$ be a fractional ideal of $R$.
Suppose that
%$R_0$ is infinite and
$J_{\indeg_R(J)}$ contains an $R$-torsion free element of $J$.
%~(e.g., a level ring of positive dimension)
%and $\dim_{R_0} \left([\omega_R]_{-a_R} \right) \ge \pd(R)$.
If $[\tr(J)]_{\indeg_R(\mm)}$ contains a non-zero divisor, then
there exists a graded ideal $I \subset R$ such that
$J \cong I$ and
$\dim_{R_0}(I_{\indeg_R(\mm)})= \dim_{R_0} \left(J_{\indeg_R(J)} \right)$.
%and
%$\dim_{R_0}(J_{\indeg_R(J)}) \le \embdim(R)-\embdim(R/I)$.
%there exists an $R$-torsion free element $\frac{y}{b} \in [\omega]_{\indeg_R(\mm)+a_R}$.
%Then $R$ is Gorenstein if and only if $[\tr(\omega_R)]_1$ contains a nonzero divisor.
\end{proposition}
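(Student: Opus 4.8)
The plan is to produce the ideal $I$ by choosing a suitable homogeneous embedding of $J$ into $R$ and then shifting so that the generating degree is recorded correctly. First I would set $c := \indeg_R(J)$ and fix a homogeneous torsion-free element $x \in J_c$. Since $x$ is a non-zerodivisor on $R$ (torsion-freeness over $R$ for an element of a fractional ideal inside ${}^*Q(R)$ means exactly $\ann_R(x)=0$), multiplication by $x^{-1}$ identifies $J$ with a graded fractional ideal $J' := x^{-1}J \subset {}^*Q(R)$ with $1 \in J'$ and $\indeg_R(J') = 0$; moreover $\tr_R(J')=\tr_R(J)$ since the trace ideal is invariant under isomorphism of modules. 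So after this reduction I may assume $\indeg_R(J)=0$, that $1\in J$, and I must find a graded ideal $I\subset R$ with $I\cong J$ and $\dim_{R_0}(I_{\indeg_R(\mm)}) = \dim_{R_0}(J_0)$.

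The key point is to use the hypothesis that $[\tr(J)]_{\indeg_R(\mm)}$ contains a non-zerodivisor $y$, where now we need $\deg y = \indeg_R(\mm) =: m$. By definition of the trace ideal, $\tr_R(J) = \sum_\varphi \varphi(J)$ over $\varphi \in {}^*\Hom_R(J,R)$, and since $R$ is Noetherian and $J$ is finitely generated, finitely many graded homomorphisms suffice; writing $y = \sum_i \varphi_i(j_i)$ with each $\varphi_i$ homogeneous and $j_i\in J$ homogeneous, and collecting the $\varphi_i$ of the appropriate degrees, I would argue that some single graded homomorphism $\varphi\colon J \to R$ of degree $m$ has $\varphi(J)\ni$ a non-zerodivisor — more precisely, that a generic $R_0$-linear combination $\varphi = \sum_i \lambda_i \varphi_i$ of the degree-$m$ pieces has $\varphi(1)$ a non-zerodivisor, using that $R_0$ is a field and that $\bigcup$ of finitely many associated primes cannot contain the relevant linear space unless one of them does (here one may have to pass to an infinite base field, or instead argue directly that since $y=\varphi(1)\cdot(\text{something})$... ) — in any case $\varphi$ is then injective because $\varphi(1)$ is a non-zerodivisor and $J\subset {}^*Q(R)$, so $\varphi(j)=0$ forces $j\cdot\varphi(1)=0$ hence $j=0$. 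Thus $I_0 := \varphi(J) \subset R$ is a graded ideal isomorphic to $J$ via $\varphi$, and $\varphi$ has degree $m = \indeg_R(\mm)$, so $[I_0]_{m} \cong [J]_{0}$ as $R_0$-vector spaces, giving $\dim_{R_0}([I_0]_{m}) = \dim_{R_0}(J_0) = \dim_{R_0}(J_{\indeg_R(J)})$. Finally I should check $\indeg_R(I_0) = m$: certainly $\indeg_R(I_0)\ge \indeg_R(\mm)=m$ since $I_0\subset \mm$ (as $R$ is non-Gorenstein, $I_0\ne R$, so $I_0\subset\mm$); and $[I_0]_m\ne 0$ since $J_0\ne 0$, so $\indeg_R(I_0)=m$ and the displayed equality is exactly $\dim_{R_0}(I_{\indeg_R(\mm)}) = \dim_{R_0}(J_{\indeg_R(J)})$ with $I := I_0$.

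The main obstacle I anticipate is the genericity/prime-avoidance step: ensuring that one can find a \emph{single} graded homomorphism $\varphi\colon J\to R$ of degree $\indeg_R(\mm)$ whose image contains a non-zerodivisor, rather than merely having the \emph{sum} of several images contain $y$. Over an infinite residue field this is routine prime avoidance applied inside the finite-dimensional $R_0$-space $\{\psi(1) : \psi\in [{}^*\Hom_R(J,R)]_m\}\subset R_m$ versus the finitely many associated primes of $R$; if $R_0$ is finite one can either note the statement is insensitive to the harmless faithfully flat base change $R_0\to R_0(t)$, or — since \cite[Proposition~3.7]{miyashita2024canonical} is cited as the model and its proof "is essentially the same" — follow whatever device is used there. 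I would also double-check at the outset the standing assumptions $R$ non-Gorenstein Cohen--Macaulay so that $\mm\ne 0$, $\indeg_R(\mm)$ is well defined, and $I_0\subsetneq R$; and confirm that "$R$-torsion free element of $J$" is being used precisely in the sense spelled out in \autoref{thm:stdgradedTeterX}(2), namely $rx=0\Rightarrow r=0$.
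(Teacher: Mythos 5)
Your overall strategy coincides with the paper's: both arguments reduce to producing a homogeneous torsion-free element of $J^{-1}$ in degree $\indeg_R(\mm)-\indeg_R(J)$ by prime avoidance over an (assumed infinite) field $R_0$, and then taking $I$ to be the corresponding multiple of $J$. Your normalization $J'=x^{-1}J$ with $1\in J'$ is a cosmetic difference, and your space $\set{\psi(1):\psi\in[{}^*\Hom_R(J',R)]_{\indeg(\mm)}}$ is exactly the paper's $V=[J^{-1}]_{\indeg(\mm)-\indeg_R(J)}$ under the identification ${}^*\Hom_R(J,R)\cong J^{-1}$.

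The step you flag as ``the main obstacle,'' however, is where the actual content of the proof lies, and you have not closed it. Prime avoidance only bites once you know that $W:=[J'^{-1}]_{\indeg(\mm)}$ is not contained in any single associated prime, and for that you must transfer the hypothesis that $[\tr(J)]_{\indeg(\mm)}$ contains a non-zerodivisor to this one graded piece; ``collecting the $\varphi_i$ of the appropriate degrees'' does not accomplish this, since a priori $[\tr(J')]_{\indeg(\mm)}=\sum_e [J'^{-1}]_e\cdot J'_{\indeg(\mm)-e}$ receives contributions from homomorphisms of every degree $e$. The missing lemma --- which your own normalization makes easy --- is that $[J'^{-1}]_e=0$ for all $e<\indeg(\mm)$ except in the degenerate case $J'=R$: the map $u\mapsto u\cdot 1$ embeds $[J'^{-1}]_e$ into $R_e$, which vanishes for $e<0$ and for $0<e<\indeg(\mm)$, while a nonzero element of $[J'^{-1}]_0$ is a unit of $R_0$ and forces $J'=R$ (a case to dispatch separately). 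Granting this, $[\tr(J')]_{\indeg(\mm)}=J'_0\cdot W$; if $W\subset\pp$ for some $\pp\in\Ass(R)$, then for $j=p/q\in J'_0$ (with $q$ a homogeneous non-zerodivisor, hence $q\notin\pp$) and $w\in W$ one has $q(jw)=pw\in\pp$ and so $jw\in\pp$, whence $[\tr(J')]_{\indeg(\mm)}\subset\pp$, contradicting the hypothesis. This is precisely the role played in the paper's proof by the claim that $g_i=0$ for $i>n$ together with the common denominator $b$. With this inserted (and the reduction to infinite $R_0$ you already mention), your argument is complete; note also that your parenthetical appeal to ``$R$ non-Gorenstein'' to get $I_0\subsetneq R$ is out of place, since \autoref{p:Wednesday} concerns an arbitrary Noetherian graded ring and an arbitrary fractional ideal, and that remark is in any case not needed for the displayed equality of dimensions.
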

\begin{proof}
We may assume that $R_0$ is infinite.
%If $R$ is Gorenstein, then $[\tr(\omega_R)]_1=R_1 \neq 0$ contains a nonzero divisor.
Assume that
%$R$ is not Gorenstein and
there exists a non-zero divisor $z \in [\tr(J)]_{\indeg_R(\mm)}$.
Put $n=\dim_\kk(J_{\indeg_R(J)})$ and $a=\indeg_R(J)$.
Then by \cite[Lemma 2.12~(2)]{miyashita2024linear},
%	the canonical module $\omega_R$ can be seen as a homogeneous fractional ideal
we may assume that $J$ is minimally generated by $R$-regular elements $f_1,\cdots,f_m$, where $m \ge n$.
	We may assume that $f_1,\cdots,f_n \in J_a$.
    Note that $\tr(J)=J \cdot J^{-1}$ by
    \cite[Lemma 2.6]{miyashita2024linear}.
    Since $z \in [\tr(J)]_{\indeg_R(\mm)}$, we can write $z=\sum_{i=1}^m{f_i g_i}$, where $g_1,\cdots,g_m$ are homogeneous elements of $J^{-1}$.
    Moreover, we have $g_i=0$ for any $n < i \le m$ because $f_1,\cdots,f_n \in J_a$ are $R$-regular. Therefore the vector space $V=[J^{-1}]_{\indeg_R(\mm)-a}$ is equal to $\frac{1}{b} \langle x_1,\cdots,x_s \rangle$ where $b$ is a non-zero divisor of $R$, $s$ is natural number and $x_1,\cdots,x_s$ homogeneous elements of $R$ of the same degree, $\indeg_R(\mm)-a+\deg(b)$.
	
Then the vector space $V'=\langle x_1,\cdots,x_s \rangle$ contains a non-zero divisor. Otherwise, $V'$ is contained in some associated prime $\pp$ of $R$,
%by prime avoidance,
and so $b[\tr(J)]_{\indeg_R(\mm)} R\subset J V' \subset \pp$, thus $[\tr(J)]_{\indeg_R(\mm)} R \subset \pp$. This yields a contradiction.
Therefore, there exists an $R$-torsion free element $\frac{y}{b} \in V$, so that $I=\frac{y}{b} \cdot J \cong J$ as a graded $R$-module
and
$\indeg_R(I)=\indeg_R(\mm)$.
In particular,
we have
$\dim_{R_0}(I_{\indeg_R(\mm)})=\dim_{R_0}(I_{\indeg_R(I)})=\dim_{R_0}(J_{\indeg_R(J)})$,
as desired.
%and
%$I_{\indeg_R(\mm)}$
%\cong [\omega_R]_{-a_R}$
%as an $R_0$-vector space.
%must contain
%次数$\indeg(\mm)$からなる$n$個の$R_0$上1次独立な元.
%$n$ independent linear forms of $R$,
%hence $\frac{y}{b} \cdot J\supset \mm$.
%So we have $n = \dim_\kk(R_1)$ and, since $R$ is not Gorenstein, $\tr(\omega_R)=\frac{y}{b} \cdot J$.
%Thus, \( R \) is of Teter type, a contradiction to \cite[Theorem 2.1]{gasanova2022rings}.
\end{proof}

\subsection{Numerical semigroup rings}\label{subsec:nume}
Let $\kk$ be a field. For a natural number $n\in\mathbb{N}$, we set $[n]:=\{1,\ldots,n\}$.\\
A subsemigroup $H$ of $\mathbb{N}$ such that $\#(\mathbb{N}\setminus H)<\infty$ is called a numerical semigroup.
It is known that a numerical semigroup has a unique minimal system of generators as a semigroup.
Therefore, a given numerical semigroup $H$ can always be written in the form $H=\langle a_1,\ldots,a_n\rangle$ where $\gcd(a_1,\ldots,a_n)=1,\,a_1<\ldots<a_n$.
Here, $\emb(H)$ denotes the number of minimal generators of $H$.\\
Hereafter, any numerical semigroup is assumed to be represented in this form and $n=\embdim(H)$. For a given numerical semigroup $H$, we define $R_H:=\kk[H]=\kk[t^{a_1},\ldots,t^{a_n}]\subset \kk[t]$.\\
The ring $R_H$ admits a graded ring structure by setting $\deg t = 1$. 
Let $S = \kk[X_1, \dots, X_n]$ be the polynomial ring in $n$ variables, and consider the graded surjective homomorphism $\varphi \colon S \to R_H$ by $X_i\mapsto t^{a_i}$. 
Then, the kernel $\ker \varphi$ is called the defining ideal of $R_H$. 
In the case $n=3$, it is known by 
\cite{herzog1970generators} that the defining ideal is generated by the $2$-minors of a $2 \times 3$ matrix $A$ over $S$. 
This matrix is called the {\it Hilbert--Burch matrix}.
Furthermore, for a given numerical semigroup, the pseudo-Frobenius numbers $\PF(H)$ and the Ap\'ery set $\Ap(H,a_1)$ are defined as follows:
\[\PF(H):=\{h\in H\,:\,\alpha+h\in H\,\; {\it for\; all} \;\,h\in H\},\]
\[\Ap(H,a_1)=\{h\in H : h-a_1\in H\}.\]
Furthermore, we define $\frob(H) := \max (\PF(H))$, which is called the Frobenius number of $H$.\\
% (Note: Standard definition usually implies \alpha \notin H for PF. Please verify if \alpha \in H is intended.)
This invariant $\PF(H)$ corresponds to the system of generators of the canonical module $\omega_{R_H}$ of $R_H$ as an $R_H$-module. That is,
\[\omega_{R_H}=\sum_{h\in \mathbb{N}\setminus H}{R_H t^{-h}}=\sum_{\alpha\in \PF(H)}{R_H t^{-\alpha}}=\sum_{\alpha\in\mathbb{N}\setminus H}\kk t^{-\alpha}+\sum_{h\in \mathbb{Z}_{>0}}{\kk t^{h}},\]
and we have $\operatorname{r}(R_H)=\#(\PF(H))=:\type(H)$.\\

Several classes of numerical semigroup rings are characterized using $\PF(H)$.
\begin{remark}
    Let $H$ be a numerical semigroup. We define the following classes:
    \begin{enumerate}[\rm(1)]
        \item We say that $H$ is \textit{symmetric} if $R_H$ is Gorenstein. This is characterized by the condition $\type(H)=1$.
        \item Suppose that $\frob(H)$ is even. We say that $H$ is \textit{pseudo-symmetric} if $\PF(H)=\left\{\frac{\frob(H)}{2}, \frob(H)\right\}$.
        \item Let $\PF(H)=\{\alpha_1, \alpha_2, \ldots, \alpha_r = \frob(H)\}$ with $\alpha_1 < \ldots < \alpha_r$. 
        We say that $H$ is \textit{almost symmetric} if
        %$R_H$ is almost Gorenstein. This is characterized by the condition that
        $\frob(H) = \alpha_{r-i} + \alpha_i$ holds for any $1 \leq i \leq r-1$.
        Note that $H$ is almost symmetric if and only if $R_H$ is almost Gorenstein
        (see \cite[Proposition 2.3]{goto2013almost}).
    \end{enumerate}
\end{remark}

Finally, let $\Omega_H^+$ be the set corresponding to the generators of $\omega_{R_H}$:
\[\Omega_H^+=\{m\in \mathbb{N} : t^m\in\omega_{R_H}\},\]
and let $\Omega_H^-$ be the set corresponding to the generators of $\omega_{R_H}^{-1}$:
\[\Omega_H^-=\{m\in \mathbb{N} :  m-\alpha_i\in H, for\,all\, i\in [n]\}.\]
According to \cite[Lemma1.1]{herzog2019trace}, the canonical trace ideal $\tr_{R_H}(\omega_{R_H})$ of $R_H$ is obtained by $\tr_{R_H}(\omega_{R_H})=\omega_{R_H}\omega_{R_H}^{-1}$. Thus, the set $\tr_{R_H}(H)$ corresponding to the generators of $\tr_{R_H}(\omega_{R_H})$ over $\kk$ is given by $\tr(H)=\Omega_H^+ +\Omega_H^-$.

\section{Graded Teter rings and canonical traces}\label{sect3}
In this section, we define the graded Teter ring as follows so as to ensure consistency between puthenpurakal's definition of higher-dimensional Teter local rings~(\cite{puthenpurakal2025higher}) and the classical definition of Artinian Teter rings.
Our goal is then to clarify a relationship between Teter rings and the trace ideal of the canonical module (see \autoref{thm:stdgradedTeter}).
Throughout this section, we retain \autoref{setup1} and use the notation of \autoref{section2}.
In addition, we assume throughout that $(R,\mathfrak m)$ is a positively graded Cohen--Macaulay ring.

\begin{definition}\label{main}
Assume $R$ is not Gorenstein.
We say that $R$ is {\it Teter}
if
there exists a graded $R$-homomorphism $\varphi: \omega_R \rightarrow R$ such that $\varphi(\omega_R)=\mm$,
or
%$\widetilde{\varphi}: \omega_R \rightarrow \varphi(\omega_R)$ is isomorphism
$\varphi$ is monomorphism
and
$\embdim(R/\varphi(\omega_R)) \leq \dim(R)$.
%あるGorenstein positively graded ring $B$と$B$から$A$への全射準同型が存在して
%$\dim(A)=\dim(B)$かつ
%$R_{\mm}$ is a Teter local ring in the sense of \autoref{}.
\end{definition}

\begin{definition}
Assume that $R$ is not Gorenstein.
$R$ is called of {\it Teter type}~(\cite{gasanova2022rings})
if there exists $\varphi \in {}^*\Hom_R(\omega_R,R)$
such that $\tr_R(\omega_R)=\varphi(\omega_R)$.
It is known that $R$ is not of Teter type if it is generically Gorenstein~(see \cite[Theorem 1.1]{gasanova2022rings}).
\end{definition}

\begin{remark}\label{rem:3.2}
\begin{enumerate}[\rm (a)]
\item Assume that $R$ is non-Gorenstein and $\dim(R)>0$.
Let $\varphi \in {}^*\Hom_R(\omega_R,R)$.
Then we have
%$R$ is generically Gorenstein and
$\varphi(\omega_R) \neq \mm$.
\item Assume that $\dim(R)=0$ and $R$ is not Gorenstein, then the following are equivalent:
\begin{enumerate}[\rm (1)]
  \item $R$ is Teter in the sense of \autoref{main};
  \item There is an $R$-surjection $\omega_R \rightarrow \mm$ as a graded $R$-module, that is,
  $R$ is Teter in the sense of \cite{Teter1974SocleFactor}.
\end{enumerate}
\item Assume that $\dim(R)>0$ and $R$ is not Gorenstein.
%Assume that $A$ is generically Gorenstein.
Consider the following conditions:
\begin{itemize}
\item[\rm (1)] There is a graded ideal $I \subsetneq R$ such that $\omega_{R} \cong I$ as a graded $R$-module and $\embdim(R/I) \leq \dim(R)$;
\item[\rm (2)] There is an ideal $J \subsetneq R_\m$ such that $\omega_{R_\m} \cong J$ as an $R_\mm$-module and $\embdim(R_\m/J) \leq \dim(R_\m)$;
\item[\rm (3)] There is an ideal $\widehat{J} \subsetneq \widehat{R_\m}$ such that
$\omega_{\widehat{R_\m}} \cong \widehat{J}$ as an $\widehat{R_\m}$-module and
$\embdim(\widehat{R_\m}/\widehat{J}) \leq \dim(\widehat{R_\m})$;
\item[\rm (4)] $R$ is a Teter graded ring in our sense;
\item[\rm (5)] $R_\m$ is a Teter local ring in the sense of \cite{puthenpurakal2025higher}.
\end{itemize}
Then
\((1)\Leftrightarrow(2)\Leftrightarrow (3) \Leftrightarrow (4)\Rightarrow(5)\) holds.
% When $R$ is generically Gorenstein, \((3)\Rightarrow(1)\) holds.
Moreover, if $R$ is a domain, then all the conditions are equivalent.
\end{enumerate}
\end{remark}
\begin{proof}
(a):
Assume $\varphi(\omega_R)=\mm$.
Since $\tr_R(\omega_R)=\mm$ and $\dim(R)>0$,
$R$ is generically Gorenstein by \cite[Lemma 2.1]{herzog2019trace}.
%Indeed,
%fix $\mathfrak p\in\Ass(R)$. Since $\dim R>0$, we have $\mathfrak p\subsetneq\mathfrak m$, hence there exists
%$x\in\mathfrak m\setminus\mathfrak p$. Localizing at $\mathfrak p$, the element $x$ becomes a unit, so
%$\mathfrak mR_{\mathfrak p}=R_{\mathfrak p}$.
%Localizing $\varphi$ by $\pp$ yields a surjection
%$\varphi_{\mathfrak p}:\ (\omega_R)_{\mathfrak p}\twoheadrightarrow \mathfrak mR_{\mathfrak p}=R_{\mathfrak p}$.
%Because the canonical module localizes on Cohen--Macaulay rings, $(\omega_R)_{\mathfrak p}\cong\omega_{R_{\mathfrak p}}$.
%Moreover,
%Since $R_{\mathfrak p}$ is Artinian,
%and in this case
%we have
%$\ell(\omega_{R_{\mathfrak p}})=\ell(R_{\mathfrak p})$ 
%by Matlis duality.
%A surjection between finite-length modules of the same length is an isomorphism; hence
%$\omega_{R_{\mathfrak p}}\cong R_{\mathfrak p}$.
%As this holds for all $\mathfrak p\in\Ass(R)$,
%$R$ is generically Gorenstein.
Therefore, by \cite[Theorem~1.1]{gasanova2022rings}, $R$ is not of Teter type; hence $\varphi(\omega_R)\neq\mathfrak m$, a contradiction.

(b):
(1) $\Rightarrow$ (2):
By definition, there exists a graded $R$-homomorphism $\varphi:\omega_R\to R$ such that either
\[
\varphi(\omega_R)=\mathfrak m
\quad\text{or}\quad
\varphi \text{ is injective and } \operatorname{embdim}\!\bigl(R/\varphi(\omega_R)\bigr)\le \dim(R).
\]
If $\varphi(\omega_R)=\mathfrak m$, we are done. Otherwise, we may assume that $\varphi$ is injective and
$\operatorname{embdim}\bigl(R/\varphi(\omega_R)\bigr)\le 0$. In this case one has
$\mathfrak m\subset \varphi(\omega_R)$. Since $R$ is not Gorenstein, we also have
$\varphi(\omega_R)\neq R$. Hence $\varphi(\omega_R)=\mathfrak m$.

(2) $\Rightarrow$ (1):
Let $\psi:\omega_R\to \mathfrak m$ be a surjection and let $\iota:\mathfrak m\hookrightarrow R$ be the natural inclusion. Set $\varphi=\iota\circ\psi:\omega_R\to R$. Then
$\operatorname{embdim}\!\bigl(R/\varphi(\omega_R)\bigr)=\operatorname{embdim}(R/\mathfrak m)=0$,
so (1) holds.

(c):
%The equivalence
\((1)\Leftrightarrow(2)\Leftrightarrow (3)\) follows from \cite[Corollary~3.9]{hashimoto2023indecomposability}.

(1) $\Rightarrow$ (4):
%Since $R$ is Cohen--Macaulay and $\omega_R$ is isomorphic to an ideal of $R$, the ring $R$ is generically Gorenstein.
Choose $\varphi:\omega_R\to R$ as the composite of an isomorphism $\omega_R\cong I$ with the inclusion $I\hookrightarrow R$.
Then
$\operatorname{embdim}\!\bigl(R/\varphi(\omega_R)\bigr)=\operatorname{embdim}(R/I)\le \dim(R)$,
and hence $R$ is Teter.

(4) $\Rightarrow$ (1):
By (a) and definition,
there exists
a graded $R$-homomorphism $\varphi:\omega_R\to R$ such that 
$\varphi$ is injective and $\operatorname{embdim}\bigl(R/\varphi(\omega_R)\bigr)\le \dim(R)$.
%Because $R$ is generically Gorenstein, \cite[Theorem~2.1]{gasanova2022rings} implies
%By (a), we have $\varphi(\omega_R)\neq \mathfrak m$.
Set $I=\varphi(\omega_R)$. As $R$ is not Gorenstein, we have $I\neq R$; consequently $\omega_R\cong I$ and $\operatorname{embdim}(R/I)\le \dim(R)$, establishing (1).

Finally, \((2)\Rightarrow(5)\) follows from \cite[Theorem~2.1]{puthenpurakal2025higher}. If, in addition, $R$ is a domain, then \((5)\Rightarrow(2)\) is given by \cite[Theorem~1.1]{puthenpurakal2025higher}.
\end{proof}

\begin{remark}\label{rem:localgradedooo1}
In \autoref{rem:3.2}~(c), (4) $\Rightarrow$ (3) does not hold in general.
We will prove later using a criterion for Teterness (see \autoref{rem:localgradedooo2}).
\end{remark}

\begin{corollary}\label{cor:gradedtype}
If $R$ is Teter,
%Assume that $R$ is generically Gorenstein.
%or \textcolor{red}{standard graded (あとでこの場合の証明書きます...)}.
then
$r(R)=\pd(R)$.
\begin{proof}
% $\dim(R)=0$のとき,
% \autoref{rem:3.2}~(a)より
% $R$ is Teter ring in the sense of \cite{Teter1974SocleFactor}.
% このとき\cite[Theorem 3.4]{elias2017teter}
% より,
% $r(R)=\embdim(R)$である.
% よって$\dim(R)>0$の場合で示せば十分.
% \autoref{rem:3.2}~(b)より
% $R_{\mm}$はlocal Teter ringである.
% このとき
% \cite[Proposition 5.15]{puthenpurakal2025higher}
% より
% $r(R_\m)=\embdim(R_\m)-\dim(R_\m)$である.
% $$r(R_{\m})=r(R),\;
% \embdim(R_{\m})=\embdim(R),\;
% \dim(R_\m)=\dim(R)$$
% なので
% (see \cite[Proposition 1.5.15]{bruns1998cohen}),
% $r(R)=\embdim(R)-\dim(R)$である.
When $\dim(R)=0$, it follows from \autoref{rem:3.2}~(a) that $R$ is Teter in the sense of \cite{Teter1974SocleFactor}. In this case, \cite[Theorem~3.4]{elias2017teter} yields
$r(R)=\embdim(R)=\pd(R)$.
Hence it suffices to treat the case $\dim(R)>0$. By \autoref{rem:3.2}~(b), the localization $R_{\mm}$ is a local Teter ring. Therefore, by \cite[Proposition~5.15]{puthenpurakal2025higher} we have
$r(R_{\mm})=\embdim(R_{\mm})-\dim(R_{\mm})$.
Since
$r(R_{\mm})=r(R)$,
$\embdim(R_{\mm})=\embdim(R)$,
$\dim(R_{\mm})=\dim(R)$
(see \cite[Proposition~1.5.15]{bruns1998cohen}),
we conclude that
$r(R)=\embdim(R)-\dim(R)=\pd(R)$.
\end{proof}
\end{corollary}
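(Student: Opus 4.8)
The plan is to reduce, in each of the two cases $\dim R = 0$ and $\dim R > 0$, to an already-available computation of the Cohen--Macaulay type; throughout, recall that $\pd(R)$ here denotes the codimension $\embdim(R) - \dim(R)$.

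In the Artinian case $\dim R = 0$, I would first apply \autoref{rem:3.2}(b): for a non-Gorenstein Artinian graded ring, Teterness in the sense of \autoref{main} is equivalent to the classical notion, i.e.\ to the existence of a graded surjection $\omega_R \twoheadrightarrow \mm$. Once that is in hand, the structure theorem for Artinian Teter rings (\cite[Theorem~3.4]{elias2017teter}) gives $r(R) = \embdim(R)$, which equals $\embdim(R) - \dim(R) = \pd(R)$ since $\dim R = 0$.

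For $\dim R > 0$ the idea is to pass to the localization $R_\mm$ at the graded maximal ideal. By \autoref{rem:3.2}(c) (the implication $(4)\Rightarrow(5)$), $R_\mm$ is a Teter local ring in the sense of \cite{puthenpurakal2025higher}, so Puthenpurakal's structure theory for higher-dimensional Teter local rings (\cite[Proposition~5.15]{puthenpurakal2025higher}) yields $r(R_\mm) = \embdim(R_\mm) - \dim(R_\mm)$. It then remains to note that each of the three invariants is unchanged when one localizes a positively graded ring with $R_0$ a field at its graded maximal ideal: $\dim$ and $\embdim$ are classical (\cite[Proposition~1.5.15]{bruns1998cohen}), and the Cohen--Macaulay type is preserved because it may be computed from a socle, equivalently a top-$\Ext$, calculation that commutes with this localization. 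Combining, $r(R) = r(R_\mm) = \embdim(R_\mm) - \dim(R_\mm) = \embdim(R) - \dim(R) = \pd(R)$.

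I do not expect a genuine obstacle here: the mathematical substance is carried by the two external inputs --- the classical Artinian Teter computation and Puthenpurakal's equality $r = \embdim - \dim$ --- together with \autoref{rem:3.2}, which is already proved. The only point deserving a line of justification is the invariance of the Cohen--Macaulay type under passing from $R$ to $R_\mm$; this is routine, but it should be stated rather than left implicit.
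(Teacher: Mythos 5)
Your proposal is correct and follows essentially the same route as the paper: the Artinian case via the equivalence with classical Teterness and \cite[Theorem~3.4]{elias2017teter}, and the positive-dimensional case by localizing at $\mm$, invoking \cite[Proposition~5.15]{puthenpurakal2025higher}, and noting that $r$, $\embdim$, and $\dim$ are unchanged under this localization. Your explicit remark that the invariance of the Cohen--Macaulay type deserves a line of justification is a small improvement in exposition over the paper, which leaves it implicit, but the mathematical content is identical.
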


Recall that
$r_{\min}(R):=\dim_{R_0}([\omega_R]_{-a_R})$.

\begin{corollary}\label{p:Wednesday1}
Assume that
%$R_0$ is infinite and
$[\omega_R]_{-a_R}$ contains an $R$-torsion free element of $\omega_R$.
%~(e.g., a level ring of positive dimension)
%and $\dim_{R_0} \left([\omega_R]_{-a_R} \right) \ge \pd(R)$.
If $[\tr(\omega_R)]_{\indeg_R(\mm)}$ contains a non-zero divisor, then
there is a graded ideal $I$ of $R$
such that
$\omega_R \cong I$ as a graded $R$-module
and
%$\embdim(R/I) \le \embdim(R)-\dim_{R_0} \left([\omega_R]_{-a_R} \right)$.
%$\embdim(R/I) \le \embdim(R)-h_{s(R)}$.
$r_{\min}(R) \le \embdim(R)-\embdim(R/I).$
%Moreover,
%if $h_{s(R)} \ge \pd(R)$,
%then $R$ is Teter ring.
%$\embdim(R/I) \le \dim(R)$
\end{corollary}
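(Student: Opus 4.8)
The plan is to apply \autoref{p:Wednesday} to $\omega_R$ after realizing it as a graded fractional ideal, and then to compare $\omega_R$ with that ideal in the single degree $\indeg_R(\mm)$. The only thing one needs in order to put $\omega_R$ in the shape demanded by \autoref{p:Wednesday} is that $R$ be generically Gorenstein, and that is exactly what the trace hypothesis provides: since $[\tr(\omega_R)]_{\indeg_R(\mm)}$ contains a nonzerodivisor, $\tr(\omega_R)$ lies in no associated prime of $R$, hence (using $\Ass(R)=\Min(R)$ for the Cohen--Macaulay ring $R$) in no minimal prime; and since $\tr(\omega_R)$ cuts out the non-Gorenstein locus (\autoref{rem:gradedaoyamagoto}), $R_{\pp}$ is Gorenstein for every minimal prime $\pp$.

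It is then standard (see, e.g., \cite{bruns1998cohen}) that $\omega_R$ is isomorphic, as a graded $R$-module up to a degree shift, to a graded fractional ideal $J\subset{}^*Q(R)$ containing a homogeneous nonzerodivisor. Fixing such a $J$ and writing $J\cong\omega_R(c)$ with $c\in\ZZ$, we have $J_{\indeg_R(J)}=[\omega_R]_{-a_R}$ as $R_0$-vector spaces, so that $\dim_{R_0}(J_{\indeg_R(J)})=r_0(R)$, the space $J_{\indeg_R(J)}$ contains an $R$-torsion free element of $J$ (torsion-freeness is preserved under isomorphism), and $[\tr(J)]_{\indeg_R(\mm)}=[\tr(\omega_R)]_{\indeg_R(\mm)}$ still contains a nonzerodivisor (the trace ideal does not depend on the shift). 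Thus $J$ satisfies the hypotheses of \autoref{p:Wednesday}, and that result produces a graded ideal $I\subset R$ with $J\cong I$ (hence $\omega_R\cong I$ as graded $R$-modules, up to shift) and with
\[
\dim_{R_0}\!\bigl(I_{\indeg_R(\mm)}\bigr)=\dim_{R_0}\!\bigl(J_{\indeg_R(J)}\bigr)=r_0(R).
\]
We may assume $R$ is not Gorenstein (otherwise the statement is trivial), so $\omega_R\not\cong R$; then $I\neq R$, hence $I\subseteq\mm$.

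Finally I would run the degree count. Put $i_0:=\indeg_R(\mm)$, so that $\mm=\bigoplus_{i\ge i_0}R_i$ with $i_0\ge 1$; then $\mm^2$ is concentrated in degrees $\ge 2i_0$, and in particular $[\mm^2]_{i_0}=0$. Therefore the inclusion $I_{i_0}\subseteq[\mm]_{i_0}$ induces an injective $R_0$-linear map
\[
I_{i_0}\hookrightarrow (I+\mm^2)/\mm^2\subseteq\mm/\mm^2,
\]
whose kernel $I_{i_0}\cap\mm^2$ vanishes because $[\mm^2]_{i_0}=0$. Combining this with the identity
\[
\dim_{R_0}\!\bigl((I+\mm^2)/\mm^2\bigr)=\dim_{R_0}(\mm/\mm^2)-\dim_{R_0}\!\bigl(\mm/(I+\mm^2)\bigr)=\embdim(R)-\embdim(R/I),
\]
we conclude $r_0(R)=\dim_{R_0}(I_{i_0})\le\embdim(R)-\embdim(R/I)$, which is the assertion.

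The only step that genuinely requires care is the reduction in the first two paragraphs: one must make sure that generic Gorensteinness really does allow $\omega_R$ to be replaced by a graded fractional ideal, and that the two hypotheses of \autoref{p:Wednesday} (a torsion-free element in the bottom degree, and a nonzerodivisor in degree $\indeg_R(\mm)$ of the trace ideal) survive this replacement, with the degree shift tracked carefully enough that the equality $\dim_{R_0}(I_{\indeg_R(\mm)})=r_0(R)$ holds on the nose. Everything after that is routine graded linear algebra.
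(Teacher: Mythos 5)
Your proposal is correct and follows essentially the same route as the paper: apply \autoref{p:Wednesday} to $\omega_R$ (viewed as a graded fractional ideal) to obtain a graded ideal $I$ with $\dim_{R_0}(I_{\indeg_R(\mm)})=r_0(R)$, and then conclude $\embdim(R/I)\le\embdim(R)-r_0(R)$ by the degree count using $[\mm^2]_{\indeg_R(\mm)}=0$. You merely make explicit two steps the paper leaves implicit --- that the trace hypothesis forces $R$ to be generically Gorenstein so that $\omega_R$ embeds as a fractional ideal, and the linear-algebra comparison of $I_{\indeg_R(\mm)}$ with $\mm/\mm^2$ --- both of which are correct.
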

\begin{proof}
By \autoref{p:Wednesday}, there exists a graded ideal \(I\subsetneq R\) such that \(\omega_R \cong I\). Then,
$\embdim(R/I)\le \embdim(R)-r_{\min}(R)$.
This yields the desired inequality.
%Thus, \( R \) is of Teter type, a contradiction to \cite[Theorem 2.1]{gasanova2022rings}.
\end{proof}

\begin{corollary}\label{p:Wednesday2}
Assume that
$r_{\min}(R) \ge \pd(R)$,
$[\omega_R]_{-a_R}$ contains an $R$-torsion free element of $\omega_R$, and that $[\tr(\omega_R)]_{\indeg_R(\mm)}$ contains a non-zero divisor,
then $R$ is Teter.
%$\embdim(R/I) \le \dim(R)$
\begin{proof}
By \autoref{p:Wednesday1} and the standing assumption,
there exists a graded ideal $I$ of $R$ such that
$\omega_R \cong I$ as a graded $R$-module and
\[
\embdim(R)-\dim(R)=\pd(R)\le r_{\min}(R)\le\embdim(R)-\embdim(R/I).
\]
In particular, $\embdim(R/I)\le \dim(R)$, and hence, $R$ is Teter by \autoref{rem:3.2}\,(b).
%特に, $h_{s(R)} \ge \pd(R)=\embdim(R)-\dim(R)$なので
\end{proof}
\end{corollary}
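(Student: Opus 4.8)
The plan is short: extract an ideal presentation $\omega_R\cong I$ with a controlled number of generators from \autoref{p:Wednesday1}, turn the numerical hypothesis $r_0(R)\ge\pd(R)$ into the embedding-dimension bound $\embdim(R/I)\le\dim R$, and then quote the graded characterization of Teter rings in \autoref{rem:3.2}. First I would apply \autoref{p:Wednesday1} to $\omega_R$: its hypotheses are exactly two of the three standing assumptions here, once one notes that $\indeg_R(\omega_R)=-a_R$ by the very definition of the $a$-invariant, so ``$[\omega_R]_{-a_R}$ contains an $R$-torsion free element of $\omega_R$'' is the hypothesis ``$J_{\indeg_R(J)}$ contains an $R$-torsion free element'' with $J=\omega_R$, while the condition on $[\tr(\omega_R)]_{\indeg_R(\mm)}$ is verbatim. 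This produces a graded ideal $I\subseteq R$ with $\omega_R\cong I$ as graded $R$-modules and $r_0(R)\le\embdim(R)-\embdim(R/I)$.

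Next I would combine this with the remaining assumption $r_0(R)\ge\pd(R)=\embdim(R)-\dim(R)$ to obtain $\embdim(R)-\dim(R)\le\embdim(R)-\embdim(R/I)$, that is, $\embdim(R/I)\le\dim R$. Since $R$ is not Gorenstein, $I\neq R$ (an isomorphism $\omega_R\cong R$ would force $R$ to be Gorenstein), so $I\subsetneq R$ is a proper graded ideal with $\omega_R\cong I$ and $\embdim(R/I)\le\dim R$. By the characterization in \autoref{rem:3.2} this is equivalent to $R$ being Teter; in the borderline situation $\embdim(R/I)=0$ one has $I=\mm$ and Teterness is immediate from \autoref{main}.

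I do not expect a genuine obstacle here: all the real work --- producing $I$ with $\dim_{R_0}\!\bigl(I_{\indeg_R(\mm)}\bigr)=r_0(R)$, hence bounding $\embdim(R/I)$, together with the harmless reduction to $R_0$ infinite --- is already carried out inside \autoref{p:Wednesday} and \autoref{p:Wednesday1}. The only points that need a moment's attention are the identification $\indeg_R(\omega_R)=-a_R$ used when invoking \autoref{p:Wednesday1}, and the remark that $I$ is a proper ideal because $R$ is non-Gorenstein; everything else is arithmetic with the displayed inequalities.
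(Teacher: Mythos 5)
Your proposal is correct and follows essentially the same route as the paper: invoke \autoref{p:Wednesday1} (noting $\indeg_R(\omega_R)=-a_R$) to produce a graded ideal $I$ with $\omega_R\cong I$ and $r_0(R)\le\embdim(R)-\embdim(R/I)$, combine with $r_0(R)\ge\pd(R)$ to get $\embdim(R/I)\le\dim R$, and conclude via the characterization in \autoref{rem:3.2}. Your extra remarks on properness of $I$ and the borderline case $\embdim(R/I)=0$ are harmless elaborations of what the paper leaves implicit.
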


\begin{remark}\label{rem:obs}
Assume $R$ is standard graded and let $I \subset R$ be a graded ideal.
Note that $\embdim(R/I)=\embdim(R)-\dim_{R_0}(I_1)$ because $R$ is standard graded.
Therefore,
by using Auslander--Buchsbaum formula~(see \cite[Theorem 1.3.3]{bruns1998cohen}),
we have
$\embdim(R/I) \le \dim(R)$
if and only if $\dim_{R_0}(I_1)
\geq \pd(R)$.
\end{remark}

\begin{proposition}\label{rem:mazide?}
Assume that $R$ is standard graded
and $\dim(R)>0$.
%Then the following are equivalent:
%\begin{enumerate}[\rm (1)]
If $R$ is Teter,
then it is level, $r(R)
%=\dim_{R_0}([\omega_R]_{-a_R})
\ge \pd(R)$
and
$[\tr_R(\omega_R)]_1$ contains a non-zero divisor of $R$.
\begin{proof}
%Let $I \subset R$ be a graded ideal.
%Note that $\embdim(R/I)=\embdim(R)-\dim_{R_0}(I_1)$ because $R$ is standard graded.
%Therefore, 不等式$\embdim(R/I) \le \dim(R)$は
%$\dim_{R_0}(I_1)
%\geq \pd(R)$
%と同値.
Since $R$ is Teter, by \autoref{rem:obs} there exists a graded ideal $I \subset R$ such that $\omega_R \cong I$ and $\dim_{R_0}(I_1) \ge \pd(R)$.
In this case $I_1 \neq 0$, and since
$r_{\min}(R)=\dim_{R_0}(I_1)\ge \pd(R)$,
we have $r_{\min}(R)\ge \pd(R)$.
Therefore, by \autoref{cor:gradedtype} we obtain
$\pd(R) \le r_{\min}(R) \le r(R) = \pd(R)$.
Hence $r(R)=r_{\min}(R)$, so $R$ is level.
Because $R$ is Teter with $\dim(R)>0$, it is in particular generically Gorenstein by \autoref{rem:3.2}~(c).
Since $R$ has positive Krull dimension, is level, and is generically Gorenstein, the same argument as in the proof of \cite[Theorem~4.4.9]{bruns1998cohen} shows that $I_1\ (\cong [\omega_R]_{-a_R})$ contains a non-zerodivisor of $R$.
Consequently $[\tr_R(\omega_R)]_1=[\tr_R(I)]_1$ contains $I_1$ and, in particular, contains a non-zerodivisor of $R$.
\end{proof}
\end{proposition}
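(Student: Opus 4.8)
The plan is to treat the three conclusions separately, in roughly increasing order of effort. That $R$ is level and that $r(R)\ge\pd(R)$ will come out formally, once $\omega_R$ has been realized as a graded ideal of $R$, from \autoref{rem:3.2}, \autoref{rem:obs} and \autoref{cor:gradedtype}; the assertion that $[\tr_R(\omega_R)]_1$ contains a non-zerodivisor needs in addition a prime-avoidance argument of the type used in the proof of \cite[Theorem~4.4.9]{bruns1998cohen}.

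First I would apply \autoref{rem:3.2}~(c): since $R$ is Teter and $\dim(R)>0$, there is a proper graded ideal $I\subsetneq R$ with $\omega_R\cong I$ as graded $R$-modules and $\embdim(R/I)\le\dim(R)$. As $R$ is standard graded, \autoref{rem:obs} rewrites the last inequality as $\dim_{R_0}(I_1)\ge\pd(R)$; since $R$ is not Gorenstein we have $\pd(R)\ge 1$, so $I_1\neq 0$, and as $I$ is a proper graded ideal its initial degree is therefore $1$, whence $I_1\cong[\omega_R]_{-a_R}$ and $r_0(R)=\dim_{R_0}(I_1)\ge\pd(R)$. Combining this with the general bound $r_0(R)\le r(R)$ and with the equality $r(R)=\pd(R)$ from \autoref{cor:gradedtype} gives $\pd(R)\le r_0(R)\le r(R)=\pd(R)$, so all of these quantities agree. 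In particular $r_0(R)=r(R)$, which is exactly the statement that $R$ is level, while $r(R)=\pd(R)$ yields the claimed inequality $r(R)\ge\pd(R)$.

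It remains to exhibit a non-zerodivisor in $[\tr_R(\omega_R)]_1$. Since $\omega_R\cong I$, we have $\tr_R(\omega_R)=\tr_R(I)\supseteq I$, the inclusion $I\hookrightarrow R$ being one of the maps contributing to the trace, so it suffices to find a non-zerodivisor in $I_1$. Here I would observe that a Teter ring of positive dimension is generically Gorenstein (this is part of \autoref{rem:3.2}~(c), and in any case follows since $\omega_R$ embeds into $R$), so $I\cong\omega_R$ has rank $1$ and is therefore contained in no associated prime of $R$; and since $R$ is level, $\omega_R$, hence $I$, is generated in its initial degree, which we have seen equals $1$. Consequently $I_1$ is contained in no (necessarily minimal) associated prime of the Cohen--Macaulay ring $R$, for otherwise $I=R\,I_1$ would be. At this point the argument from the proof of \cite[Theorem~4.4.9]{bruns1998cohen} applies: after the harmless reduction to the case where $R_0$ is infinite, a sufficiently general element of the finite-dimensional $R_0$-vector space $I_1$ lies outside the finite union of associated primes of $R$, hence is a non-zerodivisor; since $I_1\subseteq[\tr_R(I)]_1=[\tr_R(\omega_R)]_1$, this completes the proof. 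I expect this last step to be the only delicate point, as the genericity argument uses both that $I$ is generated in degree $1$ and that $R$ is generically Gorenstein---which is precisely why the level and generically Gorenstein properties had to be extracted first.
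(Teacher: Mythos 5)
Your proof is correct and follows essentially the same route as the paper's: realize $\omega_R$ as a graded ideal $I$ via \autoref{rem:3.2}~(c) and \autoref{rem:obs}, deduce levelness and $r(R)=\pd(R)$ from \autoref{cor:gradedtype}, and then produce a non-zerodivisor in $I_1\subseteq[\tr_R(\omega_R)]_1$ by the prime-avoidance argument of \cite[Theorem~4.4.9]{bruns1998cohen}. The only difference is cosmetic: the paper merely cites that argument, whereas you spell it out (correctly) using that $I$ is generated in degree $1$ and meets no associated prime.
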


\begin{corollary}\label{cor:eeyan}
All Cohen--Macaulay standard graded Teter rings are level.
\end{corollary}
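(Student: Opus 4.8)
The plan is to reduce to the two cases $\dim R=0$ and $\dim R>0$ separately, since the notion of level (i.e.\ $\omega_R$ generated in a single degree, equivalently $r(R)=r_0(R)$) behaves differently there. First I would dispose of the positive-dimensional standard graded case: this is exactly \autoref{rem:mazide?}, which already asserts that a standard graded Teter ring of positive dimension is level (as part of a stronger statement). So nothing new is needed there.

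The remaining case is $\dim R=0$. Here, by \autoref{rem:3.2}\,(a), being Teter in the sense of \autoref{main} forces the existence of a graded surjection $\varphi\colon\omega_R\twoheadrightarrow\mm$, i.e.\ $R$ is Teter in the classical sense of \cite{Teter1974SocleFactor}. By \autoref{cor:gradedtype} we then have $r(R)=\pd(R)=\embdim(R)$ (since $\dim R=0$). The key step is to show $r_0(R)=r(R)$. I would argue as follows: a graded surjection $\omega_R\twoheadrightarrow\mm$ must carry a minimal generating set of $\omega_R$ onto a generating set of $\mm$, and in the Artinian standard graded case $\mm$ is generated in degree $1$, so $\varphi$ restricts to a surjection from the degree-$1$ part of a minimal generating space of $\omega_R$; combined with the dimension count $r(R)=\embdim(R)=\dim_{R_0}R_1$, this forces $\omega_R$ to have all of its minimal generators in the single degree $-a_R$, hence $r_0(R)=r(R)$ and $R$ is level. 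Alternatively, and perhaps more cleanly, one can invoke that a Teter Artinian ring has $\omega_R\cong\mm$ up to a shift (from the surjection being forced to be an isomorphism by comparing lengths or minimal numbers of generators, using $r(R)=\embdim R$), and $\mm$ is generated in a single degree, so $\omega_R$ is too.

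The main obstacle I anticipate is making the degree-bookkeeping in the $\dim R=0$ case airtight: one must be careful that the surjection $\omega_R\twoheadrightarrow\mm$ is graded of some internal degree, track how this shifts $[\omega_R]_{-a_R}$ against $\mm/\mm^2=R_1$, and confirm that no minimal generators of $\omega_R$ can live in a degree other than $-a_R$. The equality $r(R)=\embdim(R)$ from \autoref{cor:gradedtype} is precisely what rigidifies the situation, since it says the number of generators of $\omega_R$ equals $\dim_{R_0}R_1$, and the surjection onto $\mm$ (whose minimal generators all sit in degree $1$) then leaves no room for generators of $\omega_R$ outside a single degree. Once this is in place, combining with \autoref{rem:mazide?} for the positive-dimensional case completes the proof that every Cohen--Macaulay standard graded Teter ring is level.
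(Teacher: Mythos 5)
Your reduction to the two cases is exactly the paper's: for $\dim R>0$ you both simply quote \autoref{rem:mazide?}. In the Artinian case, however, you take a genuinely different route. The paper writes $R\cong G/\soc(G)$ for a standard graded Artinian Gorenstein ring $G$ with $\embdim(R)=\embdim(G)$, uses the symmetry of the Gorenstein $h$-vector to get $h_{s(R)}=h_1=\embdim(R)$, and then invokes \cite[Theorem~3.12]{elias2017teter}. You instead argue directly on the graded surjection $\varphi\colon\omega_R\twoheadrightarrow\mm$: by Nakayama it induces a surjection $\omega_R/\mm\omega_R\twoheadrightarrow\mm/\mm^2$, the target is concentrated in degree $1$ of dimension $\embdim(R)$, and since $\mu(\omega_R)=r(R)=\embdim(R)$ by \autoref{cor:gradedtype}, a single graded component of $\omega_R/\mm\omega_R$ must already have full dimension $r(R)$, forcing all minimal generators of $\omega_R$ into one degree. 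This is correct and arguably more self-contained (no appeal to the $G/\soc(G)$ structure theorem or to $h$-vector symmetry); the one point you must make airtight, as you yourself note, is that $\varphi$ is homogeneous of some fixed degree, which is what the paper's convention for graded homomorphisms in \autoref{rem:3.2}\,(b) provides. The paper's route, by contrast, buys a statement purely about $h$-vectors and outsources the levelness criterion to the literature.

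Two small corrections. First, your parenthetical ``alternative'' is wrong: the surjection $\omega_R\twoheadrightarrow\mm$ is never an isomorphism, since $\ell(\omega_R)=\ell(R)=\ell(\mm)+1$, so $\omega_R\not\cong\mm$ (its kernel has length one); stick with the generator-counting argument. Second, the reduction of the $\dim R=0$ case to a classical Teter surjection is \autoref{rem:3.2}\,(b), not (a) (part (a) concerns $\dim R>0$). Neither issue affects the validity of your main argument.
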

\begin{proof}
Let $R$ be a Cohen--Macaulay standard graded Teter ring.
By \autoref{rem:mazide?}, if $\dim(R)>0$ then $R$ is level, so it suffices to consider the case $\dim(R)=0$.
Although the fact that an Artinian graded Teter ring is level is probably well known to experts, we include a proof for completeness.
In this case there exists a standard graded Artin Gorenstein ring $G$ such that
$R \cong G/\soc(G)$ and $\embdim(R)=\embdim(G)$,
where $\soc(G):=\{a \in G: a\mm_G=0\}$.
Let $s$ be the socle degree of $G$ and put $n=\embdim(R)$.
Then the $h$-vectors of $G$ and $R$ can be written as
$h(G)=(h_0,\dots,h_{s(G)})$ and $h(R)=(h_0,\dots,h_{s(G)-1})$.
Since $G$ is Gorenstein, its $h$-vector is symmetric~(see \cite[Theorem 4.1]{stanley1978hilbert}), hence
$h_{s(G)-1}=h_1=\dim_{\kk} R_1=n$.
Therefore $R$ is level by \cite[Theorem~3.12]{elias2017teter}.
\end{proof}

\begin{remark}\label{rem:localgradedooo2}
\begin{enumerate}[\rm (a)]
\item \autoref{cor:eeyan} fails in general in the semi-standard graded setting, and there exist examples in which the difference between $r(R)$ and $r_{\min}(R)$ can take any prescribed value (see \autoref{thm:examples}).
\item In \autoref{rem:3.2}~(c), (4) $\Rightarrow$ (3) does not hold in general.
Let $A=\kk[x,y]/(x^3,y^4,xy)$. Note that $B=A_{\mm}$ is an Artinian Teter local ring in the sense of \cite{Teter1974SocleFactor} (see \cite[Example 4.3\,(2)]{gasanova2022rings}).
Set $R=A[z]$ and regard $R$ as a standard graded ring by declaring $\deg(x)=\deg(y)=\deg(z)=1$. By \cite[Example 5.15]{puthenpurakal2025higher},
we have
$R_{\mm_R} \cong B[z]_{\mm_B B[z] + z B[z]}$ is a Teter local ring in the sense of \cite{puthenpurakal2025higher}. However, since $R$ is not level, $R$ is not Teter by \autoref{cor:eeyan}.
\end{enumerate}
\end{remark}

%The following statement is proved (for standard graded rings) in the course of the proof of \cite[Theorem 4.4.9]{bruns1998cohen}, but the same argument goes through unchanged without the standard graded hypothesis. For completeness, we include a proof.

\begin{remark}
%[{cf. \cite[Theorem 4.4.9]{bruns1998cohen}}]
\label{rem:kakubeki?}
%Assume $R$ is Cohen--Macaulay.
If $R$ is generically Gorenstein and level,
then $[\omega_R]_{-a_R}$ contains a torsion-free element of $\omega_R$.
\begin{proof}
While the statement above differs from \cite[Theorem 4.4.9]{bruns1998cohen}, the argument in its proof yields the claim needed here. Moreover, the standard graded hypothesis is not used and can be omitted verbatim.
\end{proof}
\end{remark}

We are now in a position to prove our main theorem.

\begin{theorem}[{\autoref{thm:stdgradedTeterX}}]\label{thm:stdgradedTeter}
Assume that $R$ is not Gorenstein. Consider the following conditions:
\begin{enumerate}[\rm (1)]
\item $R$ is level, $r(R)\ge \pd(R)$ and $[\tr(\omega_R)]_{\indeg(\mm)}$ contains a non-zerodivisor;
\item $[\omega_R]_{-a_R}$ contains a torsion-free element of $\omega_R$~(e.g., $R$ is a domain), $r_{\min}(R) \ge \pd(R)$ and $[\tr(\omega_R)]_{\indeg(\mm)}$ contains a non-zerodivisor;
\item $R$ is a Teter and level ring with $r(R)=\pd(R)$ and $\dim(R)>0$;
\item $R$ is Teter with  $\dim(R)>0$.
%\item $R$ is a Teter ring and $r(R)=\pd(R)$.
\end{enumerate}
Then \((1) \Rightarrow (2)\Rightarrow(3) \Rightarrow (4)\) holds. %If, in addition, \(R_{\indeg_R(\mm)}R\) is \(\mm\)-primary~(e.g., $R$ is semi-standard graded), then \((1)\Rightarrow(2)\) also holds.
Moreover, if \(R\) is standard graded, all the conditions are equivalent; in particular, if any one of them holds, then \(r(R)=r_{\min}(R)=\pd(R)\).
\begin{proof}
Note that $\tr_R(\omega_R) \subset \mm$ because $R$ is not Gorenstein (see \cite[Lemma 2.1]{herzog2019trace}).

It follows from \autoref{rem:kakubeki?} that
$(1)\Rightarrow(2)$ holds.
We show (2)$\Rightarrow$(3).
$R$ is Teter by \autoref{p:Wednesday2}.
Since
$\operatorname{tr}(\omega_R)\ (\subset \mathfrak m)$ contains a non-zerodivisor, we have
$\dim(R)=\depth(R)>0$.
Hence it suffices to show that $R$ is level.
By \autoref{cor:gradedtype} we have $r(R)=\pd(R)$.
Therefore, from $\pd(R)=r(R)\ge r_{\min}(R)\ge \pd(R)$, we obtain
$r(R)=r_{\min}(R)=\pd(R)$, and thus $R$ is level.
The implication $(3)\Rightarrow(4)$ is clear.

%Assuming that $R_{\indeg_R(\mathfrak m)}R$ is $\mathfrak m$-primary,

Assume that $R$ is standard graded and we prove $(4)\Rightarrow(1)$.
By \autoref{cor:eeyan}, the ring $R$ is level, and by \autoref{cor:gradedtype} we have $r(R)=\pd(R)$.
\end{proof}
\end{theorem}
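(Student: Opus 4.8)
The plan is to verify the chain \((1)\Rightarrow(2)\Rightarrow(3)\Rightarrow(4)\) using the preparatory results assembled above, and then, in the standard graded case, to close the loop with \((4)\Rightarrow(1)\). As a preliminary observation I would record that $\tr_R(\omega_R)\subset\mm$, since $R$ is not Gorenstein (\autoref{rem:gradedaoyamagoto}); this is what gives teeth to the recurring hypothesis that $[\tr(\omega_R)]_{\indeg(\mm)}$ contains a non-zerodivisor.

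For \((1)\Rightarrow(2)\), the only thing $(2)$ demands beyond $(1)$ is a torsion-free element of $\omega_R$ in degree $-a_R$: levelness already yields $r_0(R)=r(R)\ge\pd(R)$, and the trace hypothesis is simply carried over. To produce the torsion-free element I would invoke \autoref{rem:kakubeki?}, whose hypotheses are ``generically Gorenstein and level''. Levelness is given; generic Gorensteinness I would extract from the trace hypothesis, as follows: a non-zerodivisor in $\tr(\omega_R)$ avoids every associated prime, in particular every minimal prime, so $\tr(\omega_R)$ lies in no minimal prime, and since $\tr(\omega_R)$ cuts out the non-Gorenstein locus (\autoref{rem:gradedaoyamagoto}), $R$ is Gorenstein at every minimal prime.

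For \((2)\Rightarrow(3)\), I would apply \autoref{p:Wednesday2} directly: its three hypotheses are exactly those listed in $(2)$, so $R$ is Teter. It then remains to upgrade to ``level with $r(R)=\pd(R)$''. By \autoref{cor:gradedtype} a Teter ring has $r(R)=\pd(R)$; combining this with $r(R)\ge r_0(R)\ge\pd(R)$ forces $r(R)=r_0(R)=\pd(R)$, so $R$ is level. The implication \((3)\Rightarrow(4)\) is immediate from the definition of Teter. Finally, for the standard graded converse \((4)\Rightarrow(1)\): \autoref{cor:eeyan} gives that a standard graded Teter ring is level; \autoref{cor:gradedtype} gives $r(R)=\pd(R)$, hence $r(R)\ge\pd(R)$; and \autoref{rem:mazide?}, applied with $\indeg(\mm)=1$ in the standard graded case, gives that $[\tr_R(\omega_R)]_1$ contains a non-zerodivisor. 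This establishes $(1)$, and the concluding equality $r(R)=r_0(R)=\pd(R)$ then follows since each condition implies $(3)$, which supplies $r(R)=\pd(R)$ and, via levelness, $r_0(R)=r(R)$.

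The real work has been front-loaded into the auxiliary statements (\autoref{p:Wednesday2}, \autoref{rem:kakubeki?}, \autoref{cor:gradedtype}, \autoref{cor:eeyan}, \autoref{rem:mazide?}), so the theorem itself is mostly bookkeeping. The one step I expect to need care is the implicit passage to ``generically Gorenstein'' inside \((1)\Rightarrow(2)\): \autoref{rem:kakubeki?} cannot be invoked without it, so one must spell out why the non-zerodivisor in $[\tr(\omega_R)]_{\indeg(\mm)}$ forces $\tr(\omega_R)$ out of all minimal primes. A secondary point is to confirm that closing the cycle \((4)\Rightarrow(1)\) does not reintroduce any dependence on the theorem being proved; this is fine, because \autoref{cor:eeyan}, \autoref{cor:gradedtype} and \autoref{rem:mazide?} are established independently.
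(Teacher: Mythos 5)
Your proposal is correct and follows essentially the same route as the paper's proof: the same reduction to \autoref{rem:kakubeki?}, \autoref{p:Wednesday2}, \autoref{cor:gradedtype}, \autoref{cor:eeyan} and \autoref{rem:mazide?}. You are in fact slightly more careful than the printed argument in two places — spelling out why the non-zerodivisor in $[\tr(\omega_R)]_{\indeg(\mm)}$ forces $R$ to be generically Gorenstein before invoking \autoref{rem:kakubeki?}, and explicitly citing \autoref{rem:mazide?} to recover the non-zerodivisor condition in $(4)\Rightarrow(1)$ — both of which the paper leaves implicit.
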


\begin{corollary}\label{cor:stdgradedTeter}
Assume that $R$ is non-Gorenstein standard graded ring with $\dim(R)>0$.
Then $R$ is Teter if and only if
$R$ is level, $r(R)=\pd(R)$ and $[\tr(\omega_R)]_1$ contains a non-zerodivisor.
%In particular, if this is the case, then \(r(R)=\pd(R)\).
\end{corollary}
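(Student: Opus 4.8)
The plan is to derive this as an immediate consequence of \autoref{thm:stdgradedTeter}. The one point that needs attention is the translation between the invariant $\indeg(\mm)$ appearing in that theorem and the degree $1$ appearing in the corollary: since $R$ is standard graded and non-Gorenstein with $\dim(R)>0$, we have $R_1\neq 0$ and $\mm$ is generated by $R_1$, so $\indeg(\mm)=1$. Thus condition (1) of \autoref{thm:stdgradedTeter} reads: \emph{$R$ is level, $r(R)\ge\pd(R)$, and $[\tr(\omega_R)]_1$ contains a non-zerodivisor.}

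For the forward implication I would assume $R$ is Teter, i.e.\ condition (4) of \autoref{thm:stdgradedTeter}. Since $R$ is standard graded, all four conditions there are equivalent, and moreover the theorem records that whenever one of them holds one has $r(R)=r_0(R)=\pd(R)$. In particular $R$ is level (part of condition (1)), $[\tr(\omega_R)]_1$ contains a non-zerodivisor, and $r(R)=\pd(R)$, which is exactly the asserted list of properties.

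For the converse I would assume $R$ is level, $r(R)=\pd(R)$, and $[\tr(\omega_R)]_1$ contains a non-zerodivisor. Then $r(R)\ge\pd(R)$ holds trivially, and using $\indeg(\mm)=1$ this is precisely condition (1) of \autoref{thm:stdgradedTeter}; by the equivalence of (1)--(4) in the standard graded case, $R$ is Teter. I do not expect any real obstacle here: the only nontrivial input is the equivalence already established in \autoref{thm:stdgradedTeter}, and the sole subtlety is observing that the inequality $r(R)\ge\pd(R)$ in condition (1) can be sharpened to the equality $r(R)=\pd(R)$ thanks to the ``in particular'' clause of that theorem.
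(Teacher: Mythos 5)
Your proposal is correct and matches the paper's intent exactly: the corollary is stated without proof precisely because it is the immediate specialization of \autoref{thm:stdgradedTeter} to the standard graded case, where $\indeg(\mm)=1$ and the equivalence of conditions (1)--(4) together with the ``in particular'' clause $r(R)=r_0(R)=\pd(R)$ gives both directions. Your observation that $R_1\neq 0$ (so that $\indeg(\mm)=1$) follows from $\dim(R)>0$ and standard gradedness is the only translation step needed, and you handle it correctly.
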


\begin{question}\label{que:teretere}
Assume that $R$ is standard graded and $\dim(R)=0$.
By \autoref{rem:3.2}, \autoref{cor:gradedtype}, and \autoref{cor:eeyan}, if $R$ is Teter, then
it is nearly Gorenstein and level, and $r(R)=\pd(R)$ holds.
% Inline sentence version
As a zero-dimensional analogue of \autoref{cor:stdgradedTeter}, we ask:
If $R$ is nearly Gorenstein and level but not Gorenstein, and if $r(R)=\pd(R)$, is $R$ necessarily Teter?
\end{question}

\begin{remark}\label{ex:unfortunately}
Unfortunately, \autoref{que:teretere} is not true in general.
Indeed, let
$S=\QQ[x_1,x_2,x_3,x_4]$ and
$$R=S/(x_{1}^{3},
x_{2}^{2},
x_{3}^{3},
x_{4}^{2},
x_{1}x_{2},  x_{1}x_{3}^{2}, x_{1}^{2}x_{3}, x_{2}x_{4},
x_{3}x_{4},
x_{2}x_{3}-x_{1}x_{4}).$$
Using \texttt{Macaulay2}~(\cite{M2}), one can check that $R$ is level, that $r(R)=\pd(R)=4$, and that
$\tr(\omega_R)=\mm_R$, whereas $R$ is not Teter.
Indeed, this can be verified by running the following computation in Macaulay~2. 
The graded minimal free resolution of $R$ over $S$, with degree shifts suppressed, is
\[
0 \longrightarrow 
S^{\oplus 4}
\xrightarrow{A} S^{\oplus 15}
\longrightarrow S^{\oplus 20}
\longrightarrow S^{\oplus 10}
\longrightarrow S \longrightarrow R \longrightarrow 0 .
\]
Hence we obtain $r(R)=\pd(R)=4$. 
By inspecting the Betti table of the minimal free resolution, we also see that $R$ is level. 
Moreover, $C=\ker(R\otimes_S A)$ has the following sixteen vectors as a basis.
\[
\left\{\!
\begin{array}{*{8}{c}}
\left[\!\begin{smallmatrix}0\\0\\0\\ x_{4}\end{smallmatrix}\!\right] &
\left[\!\begin{smallmatrix}0\\ x_{4}\\ x_{2}\\ x_{3}\end{smallmatrix}\!\right] &
\left[\!\begin{smallmatrix}0\\0\\0\\ x_{2}\end{smallmatrix}\!\right] &
\left[\!\begin{smallmatrix}-x_{4}\\ x_{2}\\ 0\\ x_{1}\end{smallmatrix}\!\right] &
\left[\!\begin{smallmatrix}x_{1}x_{4}\\ 0\\ 0\\ 0\end{smallmatrix}\!\right] &
\left[\!\begin{smallmatrix}x_{3}^{2}\\ 0\\ 0\\ 0\end{smallmatrix}\!\right] &
\left[\!\begin{smallmatrix}x_{1}x_{3}\\ 0\\ 0\\ 0\end{smallmatrix}\!\right] &
\left[\!\begin{smallmatrix}x_{1}^{2}\\ 0\\ 0\\ 0\end{smallmatrix}\!\right] \\
\left[\!\begin{smallmatrix}0\\ x_{1}x_{4}\\ 0\\ 0\end{smallmatrix}\!\right] &
\left[\!\begin{smallmatrix}0\\ x_{3}^{2}\\ 0\\ 0\end{smallmatrix}\!\right] &
\left[\!\begin{smallmatrix}0\\ x_{1}x_{3}\\ 0\\ 0\end{smallmatrix}\!\right] &
\left[\!\begin{smallmatrix}0\\ x_{1}^{2}\\ 0\\ 0\end{smallmatrix}\!\right] &
\left[\!\begin{smallmatrix}0\\ 0\\ x_{1}x_{4}\\ 0\end{smallmatrix}\!\right] &
\left[\!\begin{smallmatrix}0\\ 0\\ x_{3}^{2}\\ 0\end{smallmatrix}\!\right] &
\left[\!\begin{smallmatrix}0\\ 0\\ x_{1}x_{3}\\ 0\end{smallmatrix}\!\right] &
\left[\!\begin{smallmatrix}0\\ 0\\ x_{1}^{2}\\ 0\end{smallmatrix}\!\right]
\end{array}
\!\right\}
\]
Therefore, since the ideal generated by the $1$-minors of $C$ equals $\mm_R$, it follows from the graded version of \cite[Corollary~3.2]{herzog2019trace} that $\tr(\omega_R)=\mm_R$. On the other hand, $\mm_R$ cannot be generated solely by the $1$-minors of any single vector in $C$. Hence, by \cite[Corollary~3.8]{gasanova2022rings}, $R$ is not of Teter type.
\end{remark}

%\textcolor{red}{codimension 2 Artin monomial idealの場合は,Ficarraの結果を使えば$R$がlevelかつ$r(R)=2$かつ$\tr(\omega_R)=\mm_R$ならばlevelが多分いえる気がする.}

\section{Relations between the nearly Gorenstein and Teter properties}\label{sect4}
In this section, as an application of \autoref{cor:stdgradedTeter}, we mainly investigate the relationship between nearly Gorenstein rings and Teter rings. In particular, we show that various known nearly Gorenstein rings are Teter (\autoref{thm:examples}), and we clarify the relationship between nearly Gorenstein rings and the their type~(\autoref{cor:omgCMtype}).
Throughout this section, we retain \autoref{setup1} and use the notation of \autoref{section2}.
In addition, we assume throughout that $(R,\mathfrak m)$ is a positively graded Cohen--Macaulay ring.

It is known that every one-dimensional standard graded generically Gorenstein ring over an infinite field with minimal multiplicity is Teter (see the proof of \cite[Proposition~5.2]{puthenpurakal2025higher}). In dimension at least two, an analogous statement does not hold in general~(see \autoref{ex:mmm}).
In fact, we can characterize the Teter property in terms of the canonical trace as follows.

\begin{corollary}\label{cor:mmOMG}
Assume that $R$ is a non-Gorenstein standard graded ring which has a minimal multiplicity.
Then
$R$ is Teter with $\dim(R)>0$
if and only if
$[\tr_R(\omega_R)]_1$ contains a non-zerodivisor.
%(for example, if $R$ is nearly Gorenstein).
%$\dim_{R_0}([\omega_R]_{-a_R})=\embdim(R)-\dim(R)$,
\end{corollary}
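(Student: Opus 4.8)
The plan is to read this off the characterization in \autoref{cor:stdgradedTeter}. The forward direction is immediate: if $R$ is Teter with $\dim(R)>0$, then the ``only if'' part of \autoref{cor:stdgradedTeter} already gives that $[\tr_R(\omega_R)]_1$ contains a non-zerodivisor. So all of the work is in the converse.

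For the converse, I would start from a non-zerodivisor $z\in[\tr_R(\omega_R)]_1$. Since $R$ is not Gorenstein, $\tr_R(\omega_R)\subseteq\mm$ by \autoref{rem:gradedaoyamagoto}, so $z$ is a non-zerodivisor in $\mm$ and hence $\dim(R)=\depth(R)>0$. By \autoref{cor:stdgradedTeter} it then suffices to prove that $R$ is level with $r(R)=\pd(R)$, and this is where the minimal multiplicity hypothesis enters. For a standard graded Cohen--Macaulay ring one has $h_1=\embdim(R)-\dim(R)=\pd(R)$ and $e(R)=\sum_i h_i$, so minimal multiplicity $e(R)=\pd(R)+1$ together with the nonnegativity of the $h$-vector of a Cohen--Macaulay ring forces $h_i=0$ for all $i\ge 2$; thus $h(R)=(1,\pd(R))$ with socle degree $s(R)=1$ (note $\pd(R)\ge 1$ since a non-Gorenstein ring is not regular). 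In particular $r_0(R)=h_{s(R)}=\pd(R)$. To compute $r(R)$, I would pass (after the harmless reduction to the case $R_0$ infinite) to the Artinian quotient $\bar R=R/(\theta_1,\dots,\theta_d)$ by a linear system of parameters; this is a regular sequence on $R$, $\bar R$ has Hilbert series $1+\pd(R)\,t$, so $\mm_{\bar R}^2=0$, hence $\soc(\bar R)=\mm_{\bar R}$ has $R_0$-dimension $\pd(R)$ and $r(R)=r(\bar R)=\pd(R)$. Combining, $r_0(R)=r(R)=\pd(R)$, so $R$ is level with $r(R)=\pd(R)$, and \autoref{cor:stdgradedTeter} yields that $R$ is Teter.

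The hard part is really this middle step --- extracting from minimal multiplicity that the $h$-vector is pinned down to $(1,\pd(R))$, and that a Cohen--Macaulay standard graded ring with this $h$-vector is automatically level of type $\pd(R)$; once that is in hand the rest is bookkeeping, and one should be a little careful about the residue field. A variant of the converse that bypasses \autoref{cor:stdgradedTeter} would instead feed \autoref{p:Wednesday2}: the hypothesis on $[\tr_R(\omega_R)]_1$ also forces $R$ to be generically Gorenstein (otherwise $\tr_R(\omega_R)$ would be contained in a minimal, hence associated, prime, precluding a non-zerodivisor), so once $R$ is known to be level, \autoref{rem:kakubeki?} supplies a torsion-free element of $\omega_R$ in $[\omega_R]_{-a_R}$, and since $r_0(R)=\pd(R)$ the hypotheses of \autoref{p:Wednesday2} are met, again giving that $R$ is Teter. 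I would present the route through \autoref{cor:stdgradedTeter} as the main argument.
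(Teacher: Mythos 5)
Your proposal is correct and follows essentially the same route as the paper: the paper's proof simply cites the well-known fact that a standard graded ring of minimal multiplicity is level with $r_0(R)=h_1=\operatorname{codim}(R)$ (referring to \cite[Section~4]{eisenbud1984linear}) and then invokes \autoref{cor:stdgradedTeter}. The only difference is that you spell out the $h$-vector computation and the Artinian reduction that the paper leaves to the reference, which is a correct expansion of the same argument.
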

\begin{proof}
It is well known that if $R$ is standard graded and has minimal multiplicity, then it is a level ring and $r_{\min}(R)=h_1=\codim(R)$ (see \cite[Section~4]{eisenbud1984linear}). Hence this follows from \autoref{cor:stdgradedTeter}.
\end{proof}

\begin{remark}\label{ex:mmm}
%\autoref{cor:mmOMG}より,
As mentioned above, in dimension at least two a ring with minimal multiplicity need not be Teter. To see this, let $n \ge 4$ and let
$X = (x_{i,j})_{1 \le i \le 2,\; 1 \le j \le n}$
be a $2 \times n$ generic matrix. Consider the generic determinantal ring
$R = \kk[X]/I_2(X)$
defined by the $2 \times 2$ minors of $X$. Then $R$ is a Cohen--Macaulay standard graded domain; moreover, it is the homogeneous coordinate ring of a rational normal scroll and has minimal multiplicity (see \cite{bruns1998cohen,eisenbud1984linear}). On the other hand, by \cite[Theorem~1.1]{ficarra2024canonical!} we have
$\tr_R(\omega_R) = I_1(X)^{n-2} R = \mm_R^{n-2}$,
and in particular $[\tr_R(\omega_R)]_1 = (0)$, so it contains no non-zerodivisor of $R$. Thus, by \autoref{cor:mmOMG} $R$ is not Teter.
\end{remark}

We recall that for standard graded Teter rings, the difference between the type and the number of minimal generators of the canonical module in the smallest degree is always $0$.
In what follows, we first show that several known classes of nearly Gorenstein rings are Teter.
Moreover, we show that there exist semi-standard graded Teter rings for which the above difference can take an arbitrary value.

%\begin{corollary}
%$\dim(R)>0$のnearly GorensteinかつlevelなCohen--Macaulay標準的次数付き環Rに対しては、$\type(R)=\embdim(R)-\dim(R)$
%$\implies$
%$R$はTeter ring」...(*)
%\end{corollary}

\begin{corollary}[{see \autoref{thm:examplesX}}]\label{thm:examples}
Assume that \(R\) is nearly Gorenstein, but not Gorenstein.
Assume further that 
$R$ satisfies one of the following conditions:
\begin{enumerate}[\rm (1)]
\item \(R\) is a level semi-standard graded ring with \(r(R)=\pd(R)\) and $\dim(R)>0$;
\item \(R\) is a semi-standard graded domain with \(\pd(R)=2\) and \(\dim(R)\ge 2\);
\item \(R\) is a standard graded affine semigroup ring with $\pd(R) \le 3$ and $\dim(R)=2$;
\item \(R\) is a standard graded ring which has minimal multiplicity;
\item \(R\) is a Stanley--Reisner ring;
\item \(R\) is a standard graded domain whose $h$-vector is $h(R)=(1,a,a)$ for some $a>0$;
\item \(R\) is
a
%almost Gorenstein
non-level
semi-standard graded 2-dimensional affine semigroup ring whose $h$-vector is $h(R)=(1,a,a+1)$ for some $a>0$.
%\item[\rm (4)] \(R\) is a semi-standard graded nearly Gorenstein affine semigroup ring of codimension \(2\);
\end{enumerate}
Then 
$R$ is Teter.
Moreover, for any positive integer $a>0$, there exists a nearly Gorenstein (non-standard graded) semi-standard graded Teter ring $R$ satisfying $(7)$ such that $r(R)-r_{\min}(R)=a$.
\begin{proof}
(1):
Since $R$ is semi-standard graded and $\dim(R)>0$, we have $R_{\indeg(\mm_R)}=R_1$, which contains a non-zerodivisor of $R$.
Hence, by \autoref{cor:nGfunnyTeter}\,(1), $R$ is Teter.

(2):
Since $R$ is nearly Gorenstein, in particular $[\tr_R(\omega_R)]_1\neq(0)$.
Therefore, by \cite[Theorem 2.1]{ficarra2025canonical}, we have $r(R)=2$.
Thus $R$ satisfies (1), and hence is Teter.

(3):
Since $R$ is not Gorenstein, we have $\pd(R)=2$ or $\pd(R)=3$.
If $\pd(R)=2$, then $R$ satisfies (2) and hence is Teter.
Therefore we may assume $\pd(R)=3$.
As $R$ is a two-dimensional standard graded affine semigroup ring, note that it is isomorphic to a projective monomial curve.
By \cite[Theorem A]{miyashita2023nearly}, in this case $R$ is level and $r(R)=\pd(R)=3$.
Hence $R$ satisfies (1), so $R$ is Teter.

(4) and (5):
(4) follows from \autoref{cor:mmOMG}.
By \cite[Theorem A]{miyashita2025canonical}, $R$ is isomorphic to the Stanley--Reisner ring corresponding to a path of length at least $3$, and in particular has minimal multiplicity.
Hence $R$ satisfies (4), and therefore is Teter.

(6):
In this case it is known that $R$ is level (see \cite[Corollary 3.11]{yanagawa1995castelnuovo} or \cite[Corollary 4.3]{higashitani2016almost}).
Moreover, since $R$ is standard graded, note that $\pd(R)=h_1=a$; from the shape of the $h$-vector it follows that $r(R)=a=\pd(R)$.
Hence $R$ satisfies (1), and thus is Teter.

(7):
By \cite[Proposition 5.4 and Theorem 6.3]{miyashita2024comparing}, $R$ is non-level and is a nearly Gorenstein semi-standard graded ring in the sense of \cite{goto2015almost}.
Hence, by \cite[Theorem 6.3]{miyashita2024comparing}, we may write $R=\kk[S]$, where
%\textcolor{red}{$n=a+1$}
\[
S \cong \big\langle \{(2i,\,2a+2-2i): 0\le i\le a+1\} \cup \{(2j+2k-1,\,4a-2j-2k+5): 0\le j\le a\} \big\rangle
\]
for some
%$a \ge 1$ and
$1\le k\le a+2$.
By the proof of \cite[Proposition 3.9]{miyashita2024comparing},
we have an isomorphism
$$\omega_R \cong
(t^{\mathbf{s}} : s \in \omega_S)R \subset Q(R),$$
where
$\omega_S= \big\langle \{\, (1-2i,\,-1+2i) : k-1-a \le i \le k-1 \,\} \cup \{\, (2i,\,2a+2-i) : 1 \le i \le a \,\} \big\rangle$.
Thus, for the ideal $I = t^{(2k-3,\,2a+5-2k)} \omega_R \cong \omega_R$, 
since $I \subset R$ and $\operatorname{codim}(R/I) = 2$, the ring $R$ is Teter.
Moreover, since $r_{\min}(R) = a+1$ and $r(R) = 2a+1$, we have $r(R) - r_{\min}(R) = a$.
\end{proof}
\end{corollary}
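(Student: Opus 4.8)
The plan is to reduce each of (1)--(6) to a single uniform observation feeding into \autoref{thm:stdgradedTeter}, and to handle (7) together with the final ``Moreover'' assertion separately, by hand. The uniform observation is this: since $R$ is nearly Gorenstein, $\tr_R(\omega_R)\supseteq\mm$, and in each of (1)--(7) the ring $R$ is semi-standard graded and we may assume $\dim R>0$ (the only Artinian instance occurring is $R\cong R_0[x_1,\dots,x_n]/\mm^2$ in (4), which is directly Teter). Then $\indeg(\mm)=1$ and $R_1\neq 0$; since $R$ is Cohen--Macaulay, no associated prime of $R$ can contain $R_1$ (such a prime would make the residue ring module-finite over $R_0$, forcing dimension $0$), so, after extending the coefficient field if necessary, a general element of $R_1$ is a non-zerodivisor and $[\tr_R(\omega_R)]_1\supseteq R_1$ contains a non-zerodivisor. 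By $(1)\Rightarrow(4)$ of \autoref{thm:stdgradedTeter} it then suffices, for (1)--(6), to check that $R$ is level with $r(R)\ge\pd(R)$; Teterness (and $r(R)=\pd(R)$) follows.

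With this reduction the first six cases are short. (1) is immediate. For (2), near-Gorensteinness gives $[\tr_R(\omega_R)]_1\neq 0$, so the codimension-two structure theory through the Hilbert--Burch theorem (\cite[Theorem~2.1]{ficarra2025canonical}) shows $R$ is level with $r(R)=2=\pd(R)$. For (3), $\pd(R)\in\{2,3\}$ since $R$ is not Gorenstein; the value $2$ is (2), and when $\pd(R)=3$ the ring $R$ is a projective monomial curve, so \cite[Theorem~A]{miyashita2023nearly} gives level with $r(R)=3=\pd(R)$. (4) is \autoref{cor:mmOMG}. For (5), \cite[Theorem~A]{miyashita2025canonical} identifies $R$ with the Stanley--Reisner ring of a path of length $\ge 3$, which has minimal multiplicity, so (4) applies. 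For (6), $R$ is level by \cite[Corollary~3.11]{yanagawa1995castelnuovo} (or \cite[Corollary~4.3]{higashitani2016almost}); as $R$ is standard graded $\pd(R)=h_1=a$, and the $h$-vector forces $r(R)=r_0(R)=h_{s(R)}=a$, so (1) applies again.

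The remaining case (7) and the final ``Moreover'' assertion are where I expect the real work to lie, because $R$ is now deliberately non-level and the standard-graded characterization is unavailable; instead one argues straight from the definition (\autoref{rem:3.2}~(c)), exhibiting a graded ideal $I\cong\omega_R$ with $\embdim(R/I)\le\dim R=2$. Using \cite[Proposition~5.4 and Theorem~6.3]{miyashita2024comparing} one obtains an explicit presentation $R=R_0[S]$ of $R$ as a $2$-dimensional affine semigroup ring, nearly Gorenstein in the sense of \cite{goto2015almost} and non-level, with $S$ described by generators depending on parameters $a$ and $k$; the proof of \cite[Proposition~3.9]{miyashita2024comparing} then supplies a monomial description of $\omega_R$ via an explicit $\omega_S$. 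Translating $\omega_R$ by a single monomial $t^{\mathbf v}$ moves it inside $R$ as an ideal $I$, and inspection of exponents gives $\embdim(R/I)\le 2$, so $R$ is Teter. Finally, reading $r_0(R)=a+1$ and $r(R)=2a+1$ off the same description yields $r(R)-r_0(R)=a$, which proves the last sentence as $a$ ranges over the positive integers. The technical weight of the corollary is thus concentrated in the explicit combinatorics of the semigroup $S$ and its canonical module; the other cases are one- or two-line reductions to \autoref{thm:stdgradedTeter}, \autoref{cor:mmOMG}, or \autoref{rem:3.2}.
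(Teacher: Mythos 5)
Your proof follows essentially the same route as the paper: cases (1)--(6) reduce, exactly as in the paper, to the ``level, $r(R)\ge\pd(R)$, non-zerodivisor in $[\tr(\omega_R)]_{\indeg(\mm)}$'' criterion of \autoref{thm:stdgradedTeter} (the paper routes this through \autoref{cor:nGfunnyTeter}, which is the same implication), with the same citations for levelness and type in each case, and your explicit justification that $R_1$ contains a non-zerodivisor is a welcome addition that the paper leaves implicit. The only caveat is case (7) and the ``Moreover'' clause, where you correctly identify the paper's strategy (use the explicit semigroup presentation from \cite{miyashita2024comparing}, translate the monomial description of $\omega_R$ into an ideal $I\subseteq R$ with $\embdim(R/I)\le 2$, and read off $r_0(R)=a+1$ and $r(R)=2a+1$) but leave the exponent bookkeeping as ``inspection,'' whereas the paper completes it by exhibiting the specific translating monomial $t^{(2k-3,\,2a+5-2k)}$.
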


Next, we investigate the relationship between the trace ideal of the canonical module and the type.
%From \autoref{thm:stdgradedTeter} we obtain the following result on the relationship between the trace ideal of the canonical module and the Cohen--Macaulay type:

\begin{proposition}\label{cor:nGfunnyTeter}
Assume that
$[\tr_R(\omega_R)]_{\indeg(\mm_R)}$ contains  non-zero divisor of $R$
(e.g., $R$ is nearly Gorenstein and that $R_{\indeg(\mm_R)}$ contains a non-zero divisor of $R$).
Then the following hold:
\begin{itemize}
\item[\rm (1)] If $r(R)\ge \pd(R)$ and $R$ is level,
then it is Teter and $r(R)=\pd(R)$;
\item[\rm (2)] If $r_{\min}(R)\ge \pd(R)$,
then it is level and $r(R)=\pd(R)$.
\end{itemize}
\begin{proof}
(1) follows from the implication $(1)\Rightarrow(3)$ in \autoref{thm:stdgradedTeter}.
(2) follows from the implication $(2)\Rightarrow(3)$ in \autoref{thm:stdgradedTeter}.
\end{proof}
\end{proposition}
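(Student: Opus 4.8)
The plan is to deduce both parts directly from the implication chain $(1)\Rightarrow(2)\Rightarrow(3)\Rightarrow(4)$ of \autoref{thm:stdgradedTeter}. First I would record that \autoref{thm:stdgradedTeter} applies: since $R$ is not Gorenstein, $\tr_R(\omega_R)\subsetneq R$ by \cite[Lemma~2.1]{herzog2019trace}, while the standing assumption places a non-zerodivisor in $[\tr_R(\omega_R)]_{\indeg(\mm_R)}\subseteq\mm_R$; hence $\depth R\ge 1$ and $\dim R=\depth R>0$, so the theorem is available.

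For part (1): the three available hypotheses — $R$ is level, $r(R)\ge\pd(R)$, and $[\tr(\omega_R)]_{\indeg(\mm)}$ contains a non-zerodivisor — are precisely condition~$(1)$ of \autoref{thm:stdgradedTeter}. Thus the implication $(1)\Rightarrow(3)$ gives that $R$ is a level Teter ring with $r(R)=\pd(R)$, which is exactly the assertion.

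For part (2): the plan is to verify condition~$(2)$ of \autoref{thm:stdgradedTeter} and invoke $(2)\Rightarrow(3)$. Two of its three clauses — $r_0(R)\ge\pd(R)$ and a non-zerodivisor in $[\tr(\omega_R)]_{\indeg(\mm)}$ — are given, so it remains to produce a torsion-free element of $\omega_R$ in degree $-a_R$. Here I would first note that the non-zerodivisor in $\tr_R(\omega_R)$ forces $R$ to be generically Gorenstein, since by \autoref{rem:gradedaoyamagoto} the canonical trace cuts out the non-Gorenstein locus and an ideal meeting the non-zerodivisors lies in no minimal prime. Then, realizing $\omega_R\cong J$ as a graded fractional ideal with $\tr_R(\omega_R)=JJ^{-1}$ and, after enlarging the ground field to an infinite one, writing the given non-zerodivisor as $z=\sum_i f_ig_i$ with homogeneous $f_i\in J$, $g_i\in J^{-1}$ of complementary degrees, I would argue prime-by-prime — as in the union-of-proper-subspaces mechanism of \autoref{rem:kakubeki?} — that $[J]_{\indeg_R(J)}$ lies in no $\mathfrak p J_{\mathfrak p}$ with $\mathfrak p\in\Ass R$, and hence contains a torsion-free element. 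Once condition~$(2)$ is in place, $(2)\Rightarrow(3)$ yields that $R$ is a level Teter ring with $r(R)=\pd(R)$, so in particular it is level and $r(R)=\pd(R)$.

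I expect this last step to be the main obstacle: \autoref{rem:kakubeki?} as stated assumes $\omega_R$ is generated in degree $-a_R$ (levelness), which here is part of the conclusion rather than a hypothesis, so the torsion-free element must instead be squeezed out of the non-zerodivisor $z$ directly. In the principal special cases the difficulty disappears — for a domain every nonzero element of $[\omega_R]_{-a_R}$ is torsion-free, so condition~$(2)$ of \autoref{thm:stdgradedTeter} holds at once — which is why the terse reduction to $(2)\Rightarrow(3)$ already suffices there.
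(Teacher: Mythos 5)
Part (1) of your argument is exactly the paper's: the hypotheses are verbatim condition (1) of \autoref{thm:stdgradedTeter}, and your preliminary observation that a non-zerodivisor in $\tr_R(\omega_R)\subseteq\mm_R$ forces $\dim R=\depth R>0$ is precisely how the theorem's own proof secures its standing assumption. That part is correct and needs no further comment.

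For part (2) the paper's proof is the same one-line citation of $(2)\Rightarrow(3)$ that you start from, and you have correctly identified the crux: condition (2) of \autoref{thm:stdgradedTeter} carries the extra clause that $[\omega_R]_{-a_R}$ contain a torsion-free element, which is not among the proposition's hypotheses, and \autoref{rem:kakubeki?} cannot supply it because levelness is here the conclusion rather than a hypothesis. The paper passes over this in silence; you try to repair it, which is the right instinct, but your sketched repair does not close the gap. The union-of-proper-subspaces mechanism of \autoref{rem:kakubeki?} needs the submodule $N$ generated by $[\omega_R]_{-a_R}$ to be all of $\omega_R$ in order to deduce $\omega_R\subseteq\mathfrak p F$ and reach a contradiction at $\mathfrak p$; without levelness one cannot conclude that $[\omega_R]_{-a_R}$ hits $(\omega_R)_{\mathfrak p}\setminus\mathfrak p(\omega_R)_{\mathfrak p}$ for every $\mathfrak p\in\Ass R$. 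Concretely, every homogeneous $g\in\omega_R^{-1}$ of degree $<\indeg(\mm)+a_R$ annihilates $[\omega_R]_{-a_R}$ (otherwise $\tr(\omega_R)$ would contain a unit), so in the decomposition $z=\sum_i f_ig_i$ the terms with $\deg f_i>-a_R$ are exactly the ones that can carry the unit in $R_{\mathfrak p}$; chasing this through, one only obtains the dichotomy that $N_{\mathfrak p}$ equals $(\omega_R)_{\mathfrak p}$ or is zero, and the bad case $N_{\mathfrak p}=0$ (all of $[\omega_R]_{-a_R}$ torsion along the component at $\mathfrak p$) is not visibly excluded by $r_0(R)\ge\pd(R)$ or by the non-zerodivisor in the trace. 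So as written your proof of (2) has a genuine gap at this step --- the same gap the paper's own terse proof leaves open --- and it would need either an actual argument ruling out that case or an added hypothesis (such as the torsion-free clause itself, or restricting to domains, where, as you note, the difficulty evaporates).
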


\begin{remark}
If, in \autoref{cor:nGfunnyTeter}, we drop only the assumption that $R_{\indeg(\mm_R)}$ contains a non-zerodivisor, the assertion analogous to (1) does not hold. Indeed, the nearly Gorenstein Artin ring $R$ in \autoref{ex:unfortunately} satisfies $r(R)=\pd(R)=4$ and is level, yet it is not Teter.
\end{remark}

As a consequence, we obtain the following corollary concerning the type of nearly Gorenstein rings.

\begin{corollary}[{\autoref{cor:omgCMtypeX}}]\label{cor:omgCMtype}
Assume $R$ is non-Gorenstein nearly Gorenstein, and that
$R_{\indeg(\mm_R)}$ contains a non-zero divisor of $R$~(e.g., when $R$ is a domain or a semi-standard graded ring with $\dim(R) > 0$).
Then the following hold:
\begin{itemize}
\item[\rm (1)]
If $R$ is level,
then $r(R) \le \pd(R)$;
\item[\rm (2)]
If $r_{\min}(R) \ge \pd(R)$,
then $R$ is level and $r(R)=\pd(R)$;
\item[\rm (3)]
If $R$ is standard graded, $\pd(R)=3$, $\dim(R) \ge 2$ and $r_{\min}(R) \neq 2$, then $R$ is level and $r(R)=3$;
%If $R$ is perfect, then it is level and $r(R)=\pd(R)$;
%\textcolor{red}{射影次元4以上でこういう例を知らないので,
%無意味な命題かもしれない...}
%\item[\rm (4)]
%If
%$R$ is a standard graded simplicial affine semigroup ring and $\pd(R)=3$,
%then $R$ is level and $r(R)=3$.
\end{itemize}
\begin{proof}
(1) and (2): Assume that $r(R) > \pd(R)$. Then, by \autoref{cor:nGfunnyTeter}~(1), we obtain $r(R) = \pd(R)$, a contradiction. Hence $r(R) \le \pd(R)$.
(2) follows from \autoref{cor:nGfunnyTeter}~(2).

(3):
Suppose $r_{\min}(R)=1$. Then $R$ is pseudo-Gorenstein. Hence, by \cite[Corollary 4.7(1)]{miyashita2025pseudo}, $R$ is Gorenstein, a contradiction. Therefore $r_{\min}(R)\ge 3$, and in this case (2) yields $r_{\min}(R)=r(R)=3$.
%(4):$r_{\min}(R) \neq 2$のときは(3)から従う.$r_{\min}(R)=2$のとき,ある$\ZZ^n$-graded moduleとしての$\omega_R$の斉次元$f_1,f_2,g_1,\cdots,g_r$ such that $\deg(f_1)=\deg(f_2)<\deg(g_i)$が存在して,これが$\omega_R$の$R$加群としての極小生成系になる.for any $1 \le i \le r$とかける.このときある$\ZZ^d$$\tr_R(\omega_R)\supseteq \mm_R$$f_1g_1+f_2g_2=\theta_1$$f_1g_1+f_2g_2=\theta_2$
\end{proof}
\end{corollary}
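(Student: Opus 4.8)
The plan is to reduce all three parts to \autoref{cor:nGfunnyTeter}. The first step is to observe that the near-Gorenstein hypothesis upgrades the assumption on $R_{\indeg(\mm_R)}$ to an assumption on the canonical trace: since $\tr_R(\omega_R)\supseteq\mm_R$, we have $[\tr_R(\omega_R)]_{\indeg(\mm_R)}\supseteq[\mm_R]_{\indeg(\mm_R)}=R_{\indeg(\mm_R)}$, which contains a non-zerodivisor by hypothesis. Hence $R$ meets the standing hypothesis of \autoref{cor:nGfunnyTeter}. The parenthetical ``e.g.'' cases are then immediate: a domain, or a semi-standard graded ring with $\dim R>0$ (where $R_{\indeg(\mm_R)}=R_1$), visibly has a non-zerodivisor in degree $\indeg(\mm_R)$.

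For (1), I would argue by contradiction: assume $R$ is level and $r(R)>\pd(R)$. In particular $r(R)\ge\pd(R)$, so \autoref{cor:nGfunnyTeter}(1) applies and yields $r(R)=\pd(R)$, contradicting $r(R)>\pd(R)$. Therefore $r(R)\le\pd(R)$. Part (2) is a direct application of \autoref{cor:nGfunnyTeter}(2): the hypothesis $r_0(R)\ge\pd(R)$ is exactly what is needed there to conclude that $R$ is level with $r(R)=\pd(R)$.

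For (3), where $\pd(R)=3$, the task is to pin down $r_0(R)$. Since always $r_0(R)\ge1$ and $r_0(R)\ne2$ by hypothesis, it suffices to rule out $r_0(R)=1$; then $r_0(R)\ge3=\pd(R)$ and part (2) finishes, giving $R$ level with $r(R)=3$. If $r_0(R)=1$, then $R$ is pseudo-Gorenstein, and I would invoke the pseudo-Gorenstein rigidity result \cite[Corollary 4.7(1)]{miyashita2025pseudo}, whose hypotheses (standard graded, nearly Gorenstein, $\dim R\ge2$) are precisely those assumed here, to conclude that $R$ is Gorenstein, contradicting the standing assumption.

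The genuine content of the statement lies entirely in \autoref{cor:nGfunnyTeter}, hence ultimately in \autoref{thm:stdgradedTeter}; once those are in hand, this corollary is only a short chain of implications together with one external input. The only place that requires any care is (3), where one must check that the hypotheses of the pseudo-Gorenstein result match the ones assumed—which they do by construction—so there is no real obstacle to overcome.
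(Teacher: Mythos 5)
Your proposal is correct and follows essentially the same route as the paper: each part is reduced to \autoref{cor:nGfunnyTeter} (with (1) by the same contradiction argument, (2) directly, and (3) by excluding $r_0(R)=1$ via the pseudo-Gorenstein rigidity result and then invoking (2)). The only addition is your explicit check that near-Gorensteinness upgrades the non-zerodivisor hypothesis to one on the canonical trace, which the paper leaves implicit in the parenthetical of \autoref{cor:nGfunnyTeter}.
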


\begin{corollary}\label{cor:fine2}
If $R$ is a nearly Gorenstein and level domain, then $r(R) \le \pd(R)$.
\end{corollary}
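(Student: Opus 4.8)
The plan is to obtain \autoref{cor:fine2} as a direct consequence of \autoref{cor:omgCMtype}(1); the work amounts to checking that a nearly Gorenstein level domain meets the hypotheses of that corollary, which is routine.

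First I would record that for a domain $R$ the non-zerodivisor condition in \autoref{cor:omgCMtype} is automatic: every nonzero homogeneous element of $R$ is a non-zerodivisor. One still needs $[R]_{\indeg(\mm_R)}\neq 0$, i.e.\ $\mm_R\neq 0$; this holds because a nearly Gorenstein ring that is non-Gorenstein is in particular not a field (indeed not regular), so $\mm_R\neq 0$, the integer $\indeg(\mm_R)$ is well defined, and $[R]_{\indeg(\mm_R)}$ is a nonzero $R_0$-subspace of $R$ all of whose nonzero elements are non-zerodivisors. Hence $R_{\indeg(\mm_R)}$ contains a non-zerodivisor of $R$.

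With the hypotheses in place, $R$ is a non-Gorenstein nearly Gorenstein ring such that $R_{\indeg(\mm_R)}$ contains a non-zerodivisor, and by assumption $R$ is level; so \autoref{cor:omgCMtype}(1) applies verbatim and yields $r(R)\le\pd(R)$. (In the degenerate Gorenstein case one has $r(R)=1\le\pd(R)$ as soon as $R$ is not regular, so the inequality persists; this case is in any event outside the standing non-Gorenstein hypothesis.) I do not expect any genuine obstacle here: the entire content is already encoded in \autoref{cor:omgCMtype}, which in turn rests on the chain of implications in \autoref{thm:stdgradedTeter}, and nothing new needs to be proved beyond the bookkeeping of hypotheses for a domain.
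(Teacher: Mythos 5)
Your proposal is correct and matches the paper's proof, which likewise deduces the claim directly from \autoref{cor:omgCMtype}(1) using the fact that in a domain every nonzero homogeneous element is a non-zerodivisor. Your extra bookkeeping (that $\mm_R\neq 0$ and the remark on the Gorenstein/regular degenerate case) is consistent with the paper's standing non-Gorenstein hypothesis and adds nothing that changes the argument.
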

\begin{proof}
This follows from \autoref{cor:omgCMtypeX}~(1).
%Assume that \(r(R)>\pd(R)\). Note that \(r(R)=r_{\min}(R)\) because \(R\) is level. Since \(r_{\min}(R)>\pd(R)\), applying \autoref{cor:fine1} yields \(r(R)=\pd(R)\), a contradiction.
\end{proof}

\begin{comment}
\begin{remark}
There exists a Cohen--Macaulay nearly Gorenstein and level standard graded affine semigroup ring with 
$\pd(R)=3$ and $\dim(R)=r_{\min}(R)=r(R)=2$.
実際,
トーリック環
$$R=\QQ[u, su, stu, s^2tu, t^2u, st^2u]$$
がそういう例になっている.
\end{remark}
\end{comment}

\begin{remark}\label{rem:lolol}
\autoref{cor:omgCMtype}~(3) fails when $\dim(R)<2$.
More precisely, when $\dim(R)=0$ there exists a standard graded nearly Gorenstein ring $R$ with
$\pd(R)=3$ and $r_{\min}(R)=1$, yet $r(R)=2$; in particular $R$ is not level.
Indeed, set
\[
A=\QQ[x,y,z]/(xz^2 - y^3,\ x^3 + xy^2 - y^2 z,\ x^2 y + y^3 - z^3),
\]
where $\deg(x)=\deg(y)=\deg(z)=1$.
Then $A$ is a non-level nearly Gorenstein standard graded domain with $\pd(A)=r(A)=2$ (see \cite[Example 3.1]{miyashita2024levelness}).
Hence $x^2+z^2\in \mm_A^2$ is a non-zerodivisor on $A$, and by \cite[Proposition 2.3\,(b)]{herzog2019trace} the ring
$R=A/(x^2+z^2)A$
is a non-level nearly Gorenstein standard graded ring with $\pd(R)=3$ and $r(R)=2$.
\end{remark}

\section{The Teter property for special constructions of algebras}\label{sect5}
In this section, as an application of \autoref{thm:stdgradedTeter}, we mainly discuss the Teter property of fiber products, tensor products, Segre products, and certain Veronese subalgebras of standard graded rings.

Throughout this section, we retain \autoref{setup1} and use the notation of \autoref{section2}.
%Throughout this section, we further assume that $(R,\mathfrak m)$ is a positively graded Cohen--Macaulay ring.

\subsection{The Teter property of fiber products of standard graded rings}
The aim of this subsection is to study the Teter property of fiber products of Cohen–Macaulay standard graded rings.
%The purpose of this subsection is to summarize some basic properties of fiber products of graded rings. 
Let $A$, $B$ be positively graded Noetherian rings
and let $\kk$ be a field.
In what follows, we suppose the following.

\begin{setup}\label{conditiona}
 \( \kk = A_0 = B_0 \) and the maps \( f \) and \( g \) are the canonical graded surjections 
\begin{center}
$f:A\to A/A_{>0}\cong \kk$ \quad and \quad $g: B\to B/B_{>0}\cong \kk$.
\end{center}
\end{setup}

%Let $f:A \to \kk$ and $g: B \to \kk$ be graded homomorphisms of graded rings.
\begin{definition}\label{def:fiberrrrrrrrrr}
Assume \autoref{conditiona}.
The subring of $A\times B$ 
$$
R := A \times_T B = \{(a, b) \in A \times B : f(a) = g(b)\}
$$
is called {\it the fiber product of $A$ and $B$ over $\kk$}.
\end{definition}

Then, one can endow $R$ with a natural graded structure by $R_n:=\{(a,b)\in A_n\times B_n : f(a)=g(b)\}$. 
By definition of the fiber product, we get a graded exact sequence
\begin{align}\label{eq0}
0 \longrightarrow R \overset{\iota}{\longrightarrow} A \oplus B \overset{\phi}{\longrightarrow} \kk
{\longrightarrow} 0
\end{align}
of $R$-modules, where $\phi = \left(\begin{smallmatrix}
f \\
-g
\end{smallmatrix}\right)$.
The map $\phi$ is surjective because $f$ or $g$ is surjective.

%In later chapters, we will mainly work under the further assumption that $T$ is a field, however, we state the propositions here in as general a form as possible.

\begin{remark}[{see \cite{lescot2006serie} or \cite[Remark 3.1]{christensen2010growth}}]\label{rem:12}
It is known that the following equalities hold:
\[
  \dim(R)=\max\{\dim(S),\dim(T)\},\qquad
  \depth(R)=\min\{\depth(S), \depth(T), 1\}.
\]
In particular, $R$ is Cohen--Macaulay if and only if
$S$ and $T$ are Cohen--Macaulay and $\dim(S)=\dim(T)\le 1$.
\end{remark}

\begin{remark}[{see \cite[Definition 3.9, Remark 3.10]{kumashiro2025canonical}}]
For a Cohen--Macaulay graded ring $(S,\MM_S)$ possessing the graded canonical module $\omega_S$,
%with a canonical module $\omega_S$,
we set
\[
\tr_S^\dagger(\omega_S) :=
\begin{cases}
\tr_S(\omega_S) & \text{if } S \text{ is not Gorenstein}, \\
\mm_S & \text{if } S \text{ is Gorenstein.}
\end{cases}
\]
\end{remark}

The following may be well known to experts, but we include a proof for the reader's convenience.

\begin{lemma}\label{lem:fiberrrrrr}
Assume that $A$ and $B$ are generically Gorenstein Cohen--Macaulay graded rings with $\dim(A)=\dim(B)=1$.
Furthermore, assume that at least one of $A$ and $B$ is not regular.
Then the following hold:
\begin{enumerate}[\rm (1)]
\item
If neither $A$ nor $B$ is regular, then $r(R)=r(A)+r(B)+1$.
If $A$ is regular, then $r(R)=r(B)+1$;
\item
If neither $A$ nor $B$ is regular, then
$\pd(R)=\pd(A)+\pd(B)+1$.
If $A$ is regular, then $\pd(R)=\pd(B)+1$;
%\item
%There exists $g \in [\omega_R]_0$
%such that
%$\omega_R \cong \omega_A \oplus \omega_B+ Rg$ as a graded $R$-module;
\item
$R$ is level if and only if one of the following holds:
\begin{enumerate}[\rm (i)]
\item $A$ and $B$ are both level with $a_A=a_B=0$;
\item $A$ is regular and $B$ is level with $a_B=0$;
\item $B$ is regular and $A$ is level with $a_A=0$.
\end{enumerate}
\item
The condition that $[\tr_R(\omega_R)]_{\indeg(\mm_R)}$ contains a non-zerodivisor
is equivalent to the condition that
$\indeg(\mm_A)=\indeg(\mm_B)$ and both
$[\tr_A(\omega_A)]_{\indeg(\mm_A)}$ and $[\tr_B(\omega_B)]_{\indeg(\mm_B)}$
contain a non-zerodivisor.
\end{enumerate}
\end{lemma}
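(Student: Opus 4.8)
The plan is to reduce all four parts to one explicit short exact sequence for $\omega_R$, after settling (2) directly. For (2): inside $A\oplus B$ one has $\mathfrak m_R=\mathfrak m_A\oplus\mathfrak m_B$ and $\mathfrak m_R^2=\mathfrak m_A^2\oplus\mathfrak m_B^2$ (a product $(a,0)(a',0)$ with $a,a'\in\mathfrak m_A$ already lies in $\mathfrak m_R^2$, and symmetrically), so $\embdim(R)=\embdim(A)+\embdim(B)$; since $R,A,B$ are Cohen--Macaulay of dimension $1$ by \autoref{rem:12}, we have $\pd=\embdim-1$ in each case, and both formulas in (2) follow --- in the ``$A$ regular'' case via $\embdim(A)=1$.

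For the canonical module, run the long exact local cohomology sequence (with respect to $\mathfrak m_R$) of the defining sequence $0\to R\to A\oplus B\to\kk\to 0$. Since $R,A,B$ have positive depth and $\kk$ has dimension $0$, it collapses to $0\to\kk\to H^1_{\mathfrak m_R}(R)\to H^1_{\mathfrak m_R}(A\oplus B)\to 0$, where the $\kk=H^0_{\mathfrak m_R}(\kk)$ sits in degree $0$; since $H^1_{\mathfrak m_R}(A\oplus B)\cong H^1_{\mathfrak m_A}(A)\oplus H^1_{\mathfrak m_B}(B)$, graded Matlis duality gives
$$0\longrightarrow \omega_A\oplus\omega_B \overset{\iota}{\longrightarrow} \omega_R \longrightarrow \kk(0)\longrightarrow 0 .$$
(Equivalently, apply ${}^*\Hom_R(-,\omega_R)$ to $0\to R\to A\oplus B\to\kk\to 0$ and use that $A\oplus B$ is module-finite and maximal Cohen--Macaulay over $R$.)

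Fix a homogeneous $z\in[\omega_R]_0$ lifting the generator of $\kk$; then $\omega_R=Rz+\iota(\omega_A\oplus\omega_B)$ and $\mathfrak m_R\omega_R=\mathfrak m_Rz+(\mathfrak m_A\omega_A\oplus\mathfrak m_B\omega_B)$, and $z$ is a minimal generator of $\omega_R$ (it maps to a nonzero element of the cokernel $\kk$ of $\iota$, which $\mathfrak m_R$ annihilates). The heart of (1) and (3) is the dichotomy: a minimal generator of $\omega_A$ stays a minimal generator of $\omega_R$ unless $A$ is regular, in which case the unique generator of $\omega_A$ falls into $\mathfrak m_R\omega_R$ --- and symmetrically for $B$. (The telling instance is $\kk[x,y]/(xy)=\kk[t]\times_\kk\kk[t]$, where $\omega_R=R$ and both generators of $\omega_A\oplus\omega_B$ collapse onto $\mathfrak m_R=\mathfrak m_R\cdot 1$.) Granting this, since at least one factor is non-regular, counting minimal generators gives $r(R)=1+r(A)+r(B)$ when neither is regular and $r(R)=1+r(B)$ when $A$ is regular, which is (1). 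For (3): the minimal generators of $\omega_R$ are $z$ (degree $0$) together with those of the non-regular factors (in degrees $-a_A$, resp.\ $-a_B$), so $\omega_R$ is generated in a single degree exactly when every non-regular factor is level with $a$-invariant $0$; splitting into the cases ``neither regular'', ``$A$ regular'', ``$B$ regular'' yields precisely (i)--(iii).

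For (4), use the description of the canonical trace of a fiber product, $\tr_R(\omega_R)=\tr_A^\dagger(\omega_A)\oplus\tr_B^\dagger(\omega_B)$, an ideal of $R$ contained in $\mathfrak m_R$ (this is the point of the $\dagger$-normalization; it can be read off from the sequence above --- every graded map $\omega_R\to R$ kills $z$ since $R$ is not Gorenstein by (1), and a graded map $\omega_A\to R$ has image in the common ideal $\mathfrak m_A\subseteq R$, and likewise for $B$ --- or cited from \cite{kumashiro2025canonical}). Given this, (4) is elementary: if $\indeg(\mathfrak m_A)\neq\indeg(\mathfrak m_B)$, every homogeneous element of $\tr_R(\omega_R)$ of degree $\indeg(\mathfrak m_R)$ has the form $(x,0)$ or $(0,y)$, hence is a zerodivisor in $R$; while if $\indeg(\mathfrak m_A)=\indeg(\mathfrak m_B)=:d$, then $(x,y)\in R$ is a non-zerodivisor iff $x$ and $y$ are, so $[\tr_R(\omega_R)]_d=[\tr_A^\dagger(\omega_A)]_d\oplus[\tr_B^\dagger(\omega_B)]_d$ contains a non-zerodivisor iff both $[\tr_A^\dagger(\omega_A)]_d$ and $[\tr_B^\dagger(\omega_B)]_d$ do. I expect the main obstacle to be the generator-counting dichotomy in the third paragraph: making it rigorous needs a concrete grip on the embedding $\iota$ (e.g.\ through the isomorphism $R\cong B[u]/(\mathfrak m_B u)$ available when $A$ is regular), and it is exactly there that the hypothesis ``at least one of $A,B$ is not regular'' genuinely enters.
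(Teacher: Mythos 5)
Your overall architecture matches the paper's: part (2) via $\embdim(R)=\embdim(A)+\embdim(B)$ together with Auslander--Buchsbaum, parts (1) and (3) via the graded short exact sequence $0\to\omega_A\oplus\omega_B\xrightarrow{\iota}\omega_R\to\kk\to 0$ (which the paper cites from \cite[Proposition 2.18~(2)]{kumashiro2025canonical} rather than rederiving from local cohomology, but your derivation is fine), and part (4) via $\tr_R(\omega_R)=\tr_A^\dagger(\omega_A)R\oplus\tr_B^\dagger(\omega_B)R$ followed by the elementary observation that $(a,b)\in R$ is a non-zerodivisor if and only if $a$ and $b$ are non-zerodivisors of the same degree. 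Parts (2) and (4) are correct as written.

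The genuine gap is exactly the one you flag yourself: the ``generator-counting dichotomy'' of your third paragraph, on which both (1) and the minimality claims in (3) rest, is asserted (``Granting this\dots'') but never proved. From the exact sequence one only gets that $\{\iota(f_i),\iota(g_j),x\}$ generates $\omega_R$ and that $x$ is a minimal generator; it is not automatic that the $\iota(f_i)$ stay outside $\mathfrak m_R\omega_R$ when $A$ is non-regular, nor that the single generator $\iota(f_1)$ falls into $\mathfrak m_R\omega_R$ when $A$ is regular --- equivalently, one must control the connecting map $\Tor_1^R(\kk,\kk)\to(\omega_A\oplus\omega_B)\otimes_R\kk$. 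The paper does not extract this from the sequence either: for (1) it localizes at $\mathfrak m_R$, identifies $R_{\mathfrak m_R}\cong A_{\mathfrak m_A}\times_\kk B_{\mathfrak m_B}$, and invokes \cite[Fact 2.6]{nasseh2019applications} for the type of a fiber product of local rings; then in (3) it proves only the non-redundancy of $x$ and of the $\iota(g_j)$ by chasing $\pi$ and the injectivity of $\iota$, and uses the already-established count $r(R)=m+1$ to conclude that these $m+1$ elements form a minimal generating set, so that the redundancy of $\iota(f_1)$ follows without a direct argument. To close your proof you must either import that citation or supply the dichotomy directly (e.g.\ via the presentation of $R$ in the regular case that you mention); as it stands, (1) is unproved and (3) inherits the gap.
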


\begin{proof}
(1):
It follows from the proof of \cite[Remark 3.18]{kumashiro2025canonical} that
$R_{\mm_R} \cong A_{\mm_A} \times_\kk B_{\mm_B}$.
Moreover, by the proof of \cite[Fact 2.6]{nasseh2019applications},
if neither $A$ nor $B$ is regular, then
$r(R_{\mm_R})=r(A_{\mm_A})+r(B_{\mm_B})+1$,
whereas if $A$ is regular, then
$r(R_{\mm_R})=r(B_{\mm_B})+1$.
Since $r(R_{\mm_R})=r(R)$, $r(A_{\mm_A})=r(A)$, and
$r(B_{\mm_B})=r(B)$, the desired assertion follows.

(2):
If neither $A$ nor $B$ is regular, then applying the Auslander--Buchsbaum formula
to $R$, $A$, and $B$ respectively, we obtain
$$\pd(R)=\embdim(A)+\embdim(B)-1=(\pd(A)+1)+(\pd(B)+1)-1=\pd(A)+\pd(B)+1.$$
Similarly, when $A$ is regular we obtain
$\pd(R)=\pd(B)+1$.

(3):
By \cite[Proposition 2.18~(2)]{kumashiro2025canonical} there is a graded exact sequence
of degree $0$:
\[
0 \rightarrow
\omega_{A} \oplus \omega_{B} \xrightarrow{\iota} \omega_R
\xrightarrow{\pi} \kk \rightarrow 0.
\]
Let $f_1,\dots,f_n$ and $g_1,\dots,g_m$ be minimal generators of $\omega_A$ and $\omega_B$
as $R$-modules, respectively, and choose a homogeneous element
$x \in [\omega_R]_0$ with $\pi(x)=1$.
Then the above short exact sequence shows that
$\{\iota(f_1),\dots,\iota(f_n),\iota(g_1),\dots,\iota(g_m),x\}$ is a generating set of
$\omega_R$ as an $R$-module.
If neither $A$ nor $B$ is regular, then by (1) this is a minimal generating set of $\omega_R$. Hence $R$ is level if and only if {\rm(i)} holds.

Now assume that $A$ is regular.
We first show that $x$ cannot be removed from a minimal generating set.
If $x$ were contained in the submodule generated by
$\{\iota(f_1),\iota(g_1),\dots,\iota(g_m)\}$, then there would exist
$a,b_1,\dots,b_m \in R$ such that
$x = a\,\iota(f_1) + \sum_{j=1}^m b_j\,\iota(g_j)$.
Applying $\pi$ to both sides, we obtain
$1 = \pi(x)
  = a\,\pi(\iota(f_1)) + \sum_{j=1}^m b_j\,\pi(\iota(g_j))
  = 0$,
which is a contradiction (here we use $\pi\circ\iota=0$).
Hence $x$ must appear in every minimal generating set of $\omega_R$.

Next we show that none of the elements $\iota(g_j)$ can be removed from a minimal generating set.
Without loss of generality, we may assume $j=1$.
Suppose that
%$\iota(g_1)$ lies in the submodule generated by the other generators and $x$.
%That is,
there exist $a,b_2,\dots,b_m,c \in R$ such that
$\iota(g_1)
= a\,\iota(f_1) + \sum_{j=2}^m b_j\,\iota(g_j) + c\,x$.
Applying $\pi$ to both sides, we obtain
$0 = \pi(\iota(g_1))
  = a\,\pi(\iota(f_1)) + \sum_{j=2}^m b_j\,\pi(\iota(g_j)) + c\,\pi(x)
  = c$,
and hence $c=0$.
Thus
$\iota(g_1) = a\,\iota(f_1) + \sum_{j=2}^m b_j\,\iota(g_j)$.
Since $\iota$ is injective, this can be rewritten in $\omega_A \oplus \omega_B$ as
$(0,g_1)
= a\,(f_1,0) + \sum_{j=2}^m b_j\,(0,g_j)$.
Using the identification $R_{\mathfrak m_R} \cong A_{\mathfrak m_A} \times_k B_{\mathfrak m_B}$,
we can write $a=(a_A,a_B)$ and $b_j=(b_{j,A},b_{j,B})$.
Then the right-hand side becomes
$(a_A f_1,\ \sum_{j=2}^m b_{j,B} g_j)$,
and comparing the second components, we obtain
$g_1 = \sum_{j=2}^m b_{j,B} g_j$.
This contradicts the assumption that $\{g_1,\dots,g_m\}$ is a minimal generating set of $\omega_B$.
Thus, for every $j$, the element $\iota(g_j)$ cannot be removed from a minimal generating set.

Therefore, in the generating set
$\{\iota(f_1),\iota(g_1),\dots,\iota(g_m),x\}$ of the $R$-module $\omega_R$,
neither $x$ nor any $\iota(g_j)$ can be removed from a minimal generating set.
By (1) we have $r(R)=m+1$, and hence
$\{\iota(g_1),\dots,\iota(g_m),x\}$
is a minimal generating set of $\omega_R$ as an $R$-module.
In particular, $R$ is level if and only if {\rm (ii)} holds.
If $B$ is regular, the argument is analogous: then $m=1$ and
$\{\iota(f_1),\dots,\iota(f_n),x\}$ is a minimal generating set of $\omega_R$ as an $R$-module,
and $R$ is level if and only if {\rm (iii)} holds.
%Thus
%$\omega_R \cong \omega_A \oplus \omega_B+ Rg$ as a graded $R$-module.

(4):
By \cite[Remark 3.18]{kumashiro2025canonical} we have
$\operatorname{tr}_R(\omega_R) = \operatorname{tr}_A^{\dagger}(\omega_A)R \oplus \operatorname{tr}_B^{\dagger}(\omega_B)R$.
Hence, for a homogeneous element $x=(a,b) \in [\tr_R(\omega_R)]_{\indeg(\mm_R)}$,
the condition that $x$ is a non-zerodivisor of $R$ is equivalent to the condition that
$\indeg(\mm_A)=\indeg(\mm_B)=\indeg(\mm_R)$ and that
$a \in [\tr_A(\omega_A)]_{\indeg(\mm_A)}$ is a non-zerodivisor of $A$ and
$b \in [\tr_B(\omega_B)]_{\indeg(\mm_B)}$ is a non-zerodivisor of $B$.
This proves the assertion.
\end{proof}

\begin{remark}\label{remark:abcdefg}
Assume that $R$ is 1-dimensional Cohen--Macaulay generically Gorenstein standard graded ring.
If $R$ has minimal multiplicity,
then it is Teter.
\begin{proof}
We may assume that $R_0$ is infinite.
In this case, in the proof of \cite[Proposition 5.2]{puthenpurakal2025higher}, the condition (3) in \autoref{rem:3.2}\,(c) is verified.
By \autoref{rem:3.2}\,(c), this is equivalent to saying that $R$ is Teter.
\end{proof}
\end{remark}

\begin{corollary}[{see \autoref{thm:examplesX}}]\label{thm:Fiber2}
Assume that $A$ and $B$ are Cohen--Macaulay generically Gorenstein standard graded rings with $\dim(A)=\dim(B)=1$, not both of which are regular.
Then $R$ is Teter
if and only if both of $A$ and $B$ have minimal multiplicity.
\begin{proof}
Assume that $R$ is Teter.
By \autoref{thm:stdgradedTeterX} it follows that $R$ is level.
By \autoref{lem:fiberrrrrr}~(3), we have $a_A=a_B=0$.
Since $A$ and $B$ are one-dimensional standard graded rings, this means that both $A$ and $B$ have minimal multiplicity.

Next assume that both of $A$ and $B$ have minimal multiplicity.
We may assume that $A_0=B_0=\kk$ is an infinite field.
Under this assumption, together with \autoref{remark:abcdefg}, the rings $A$ and $B$ are each either Gorenstein or Teter, and moreover we have $a_A=a_B=0$ and both $A$ and $B$ are level (see \cite[Section 4]{eisenbud1984linear}).
In the Gorenstein case, the hypothesis further implies that they are hypersurfaces.
Combining this with \autoref{thm:stdgradedTeter}, we see that in either case, hypersurface or Teter, the equalities $r(A)=\pd(A)$ and $r(B)=\pd(B)$ hold, and both $[\tr_A(\omega_A)]_1$ and $[\tr_B(\omega_B)]_1$ contain a non-zerodivisor.
Therefore, by \autoref{lem:fiberrrrrr}, the ring $R$ is Teter.
\end{proof}
\end{corollary}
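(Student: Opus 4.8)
The plan is to reduce the statement to the one-dimensional case of the Teterness criterion, \autoref{cor:stdgradedTeter}, and then feed in the structural description of fiber products from \autoref{lem:fiberrrrrr}. First I would record that $R=A\times_\kk B$ is again a one-dimensional Cohen--Macaulay standard graded ring: Cohen--Macaulayness and $\dim R=1$ come from \autoref{rem:12} (using $\dim A=\dim B=1\le 1$), and standardness is clear since $R_n=A_n\times B_n$ for all $n\ge 1$; moreover $R$ is \emph{not} Gorenstein, because \autoref{lem:fiberrrrrr}(1) and the hypothesis that at least one factor is non-regular give $r(R)\ge 2$. Thus \autoref{cor:stdgradedTeter} applies: $R$ is Teter if and only if $R$ is level, $r(R)=\pd(R)$, and $[\tr_R(\omega_R)]_1$ contains a non-zerodivisor.

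For $(1)\Rightarrow(2)$: if $R$ is Teter then, being standard graded with $\dim(R)>0$, it is level by \autoref{cor:eeyan} (equivalently $(4)\Rightarrow(1)$ of \autoref{thm:stdgradedTeter}), so one of the alternatives (i), (ii), (iii) of \autoref{lem:fiberrrrrr}(3) holds. In each of them both $A$ and $B$ are level and each factor is either regular or has vanishing $a$-invariant; since a one-dimensional standard graded Cohen--Macaulay ring has minimal multiplicity precisely when it is regular or has $a$-invariant $0$ (an elementary $h$-vector computation), this is exactly $(2)$.

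For $(2)\Rightarrow(1)$: after harmlessly passing to an infinite residue field, I would verify the three conditions of \autoref{cor:stdgradedTeter} for $R$ in turn. The key input is that a one-dimensional standard graded Cohen--Macaulay generically Gorenstein ring with minimal multiplicity is level with $a$-invariant $0$ (unless it equals $\kk[x]$), satisfies $r=r_0=\codim$, and is either a hypersurface or, by \autoref{remark:abcdefg}, Teter; in the non-regular cases this yields $r(A)=\pd(A)$ and that $[\tr_A(\omega_A)]_1$ contains a non-zerodivisor (the trace is the whole ring for a hypersurface, where $A_1$ contains a non-zerodivisor since $A$ is Cohen--Macaulay of positive dimension, and the non-zerodivisor is part of the criterion \autoref{thm:stdgradedTeter} for a Teter ring), and likewise for $B$; when a factor equals $\kk[x]$ one has $\indeg(\mm)=1$ and $R_1$ itself contains a non-zerodivisor. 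Plugging this into \autoref{lem:fiberrrrrr}: part (3) gives that $R$ is level, parts (1)--(2) give $r(R)=\pd(R)$ (directly, including the branch where a factor is regular), and part (4) gives that $[\tr_R(\omega_R)]_1$ contains a non-zerodivisor since $\indeg(\mm_A)=\indeg(\mm_B)=1$. By \autoref{cor:stdgradedTeter}, $R$ is Teter.

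The main difficulty I expect is bookkeeping rather than substance: the case where one of $A$, $B$ is regular, hence isomorphic to $\kk[x]$, is a genuine exception to the convenient slogans ``$a_A=a_B=0$'' and ``$r(A)=\pd(A)$'' (there $a=-1$ and $r=1\ne 0=\pd$), so one must route around it via the regular-factor branches of \autoref{lem:fiberrrrrr}(1)--(3). The only other point needing care is confirming that $R$ is genuinely non-Gorenstein before invoking \autoref{cor:stdgradedTeter} --- which is precisely where ``at least one of $A$ and $B$ is not regular'' enters.
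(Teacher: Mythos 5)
Your proposal is correct and follows essentially the same route as the paper: both directions reduce to the standard graded Teter criterion (\autoref{cor:stdgradedTeter}, equivalently \autoref{thm:stdgradedTeter}) combined with the structural facts about fiber products in \autoref{lem:fiberrrrrr} and the minimal multiplicity input from \autoref{remark:abcdefg}. Your explicit routing around the regular factor $\kk[x]$ (where $a=-1$ and $r\ne\pd$) is in fact slightly more careful than the paper's own wording of the $(1)\Rightarrow(2)$ step, but the substance is identical.
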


\subsection{The Teter property of Veronese subalgebras}
The purpose of this subsection is to investigate the Teter property of suitably low-dimensional Veronese subalgebras.
The next statement is well known to experts, but we include a proof for the reader's convenience.

\begin{remark}\label{rem:trivial1}
Assume $R$
%=\bigoplus_{n\ge0} R_n$
is standard graded and $1 \le \dim(R)\le2$.
If $R$ has minimal multiplicity
(i.e. $e(R)=\embdim(R)-\dim(R)+1$),
then
%the Veronese subalgebra
$R^{(k)}$
%:=\bigoplus_{n\ge0} R_{nk}$
also has minimal multiplicity and $e(R^{(k)})=k^{\dim(R)-1}e(R)$ for all $k \in \ZZ_{>0}$.
In particular, $R^{(k)}$ is Gorenstein if and only if $k^{\dim(R)-1}e(R) \le 2$.
%\textcolor{red}{$R^{(k)}$ is Gorenstein if and only if $k^{d-1}e(R) \le 2$}
%$e\!\bigl(R^{(q)}\bigr)=\mu\!\bigl(\mm^{(q)}\bigr)-d+1,\qquad \mm^{(q)}:=\bigoplus_{n\ge1} R_{nq}$.
\end{remark}
\begin{proof}
%We may assume that
Set $d=\dim(R)\le2$.
If $R$ has minimal multiplicity and $d\le2$, then its Hilbert series is
\[
\HS_R(t)=\frac{1+ct}{(1-t)^d}
\]
for some $c \in \NN$.
Consequently, we have
$e(R)=1+c$
and
%the Hilbert function is explicitly
\[
\dim_\kk R_n=
\begin{cases}
1+c & (d=1,\ n\ge1),\\[2pt]
(c+1)\,n+1 & (d=2,\ n\ge0).
\end{cases}
\]
Moreover, for every $k\ge1$ we have:
%\begin{align*}
$\mu\!\bigl(\mm^{(k)}\bigr)
= \dim_\kk \bigl[R^{(k)}\bigr]_1
 = \dim_\kk R_k$
and
$e\!\bigl(R^{(k)}\bigr)
= k^{d-1}e(R)$.
\begin{itemize}
\item When $d=2$,
since $\dim_\kk R_k=(c+1)k+1$ and $e(R)=c+1$,
we have
\[
e\bigl(R^{(k)}\bigr)=ke(R)=k(c+1)
%=\bigl((c+1)k+1\bigr)-2+1
=\mu\!\bigl(\mm^{(k)}\bigr)-d+1.
\]
\item When $d=1$,
since $\dim_\kk R_k=1+c=e(R)$ for all $k\ge1$,
we have
\[
e\bigl(R^{(k)}\bigr)=e(R)=1+c
=\mu \bigl(\mm^{(k)}\bigr)-d+1.
\]
%\item When $d=0$,
%note that $R=\kk\oplus R_1$ when $R$ has minimal multiplicity.
%Thus $R^{(k)}=\kk$ has minimal multiplicity for $k \ge 2$; for $k=1$ the claim is tautological.
\end{itemize}
Combining the three cases yields the assertion.
Noting that $R$ is level and that $k^{d-1}e(R)=e(R^{(k)})=1+h_1$,
where $h(R^{(k)})=(1,h_1)$ is the $h$-vector of $R^{(k)}$ (here we also allow $h_1=0$).
Thus we see that
$R^{(k)}$ is Gorenstein if and only if $h_1\le 1$, equivalently $k^{d-1}e(R)\le 2$.
\end{proof}

\begin{theorem}\label{thm:Veronese1}
Assume that $R$ is standard graded and $\dim(R) \le 2$.
If $R$ has minimal multiplicity and that it is either Teter or Gorenstein,
then %its $k$-th Veronese subalgebra
$R^{(k)}$ is again Teter or Gorenstein for every $k \ge 1$.
In particular, if either $\dim(R)=1$ with $e(R)\ge 3$ or $\dim(R)=2$, then $R^{(k)}$ is Teter for every $k\ge 2$.
\end{theorem}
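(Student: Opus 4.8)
The plan is to reduce the statement to \autoref{cor:mmOMG} together with \autoref{lem:Yeah}. First I would dispose of the trivial ranges: for $k=1$ the conclusion is the hypothesis, while if $\dim R=0$ then minimal multiplicity forces $R\cong\kk[x_1,\ldots,x_n]/\mm^2$, so $R^{(k)}=\kk$ is regular for every $k\ge2$. Thus assume $k\ge2$ and $1\le\dim R\le2$. By \autoref{rem:trivial1}, $R^{(k)}$ is again standard graded of minimal multiplicity, and it is Gorenstein exactly when $k^{\dim R-1}e(R)\le2$; in that case we are done, so from now on assume $R^{(k)}$ is not Gorenstein. By \autoref{cor:mmOMG} it then suffices to produce a non-zerodivisor of $R^{(k)}$ lying in $[\tr_{R^{(k)}}(\omega_{R^{(k)}})]_1$.

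To feed \autoref{lem:Yeah}, I would record two facts. First, since $R$ is Teter---hence generically Gorenstein for $\dim R>0$ by \autoref{rem:3.2}\,(c)---or Gorenstein, in either case $R$ is generically Gorenstein, so $\omega_R$ may be realized as a graded fractional ideal of $R$ containing a non-zerodivisor. Second, the $k$-th Veronese submodule $(\omega_R)^{(k)}$ is, up to a degree shift, a canonical module of $R^{(k)}$; as the trace ideal is insensitive to isomorphism and to degree shifts, this gives $\tr_{R^{(k)}}(\omega_{R^{(k)}})=\tr_{R^{(k)}}\bigl((\omega_R)^{(k)}\bigr)$.

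Next I would exhibit a linear non-zerodivisor $y\in R_1$ with $yR\subseteq\tr_R(\omega_R)$. If $R$ is Teter, \autoref{cor:mmOMG} hands us such a $y$ directly, namely a non-zerodivisor lying in $[\tr_R(\omega_R)]_1\subseteq R_1$. If $R$ is Gorenstein, then $\tr_R(\omega_R)=R$, and since $R$ has minimal multiplicity its $h$-vector is $(1,c)$ with $c\le1$ by symmetry, so $R$ is a polynomial ring or a hypersurface ring defined by a quadric; either kind has a linear non-zerodivisor. Now I apply \autoref{lem:Yeah} with $J=\omega_R$, with $I:=yR$ (generated by the single element $y\in R_1$, which also witnesses the lemma's requirement of a homogeneous non-zerodivisor in $R_1$), and with $\fkn=(R_1)R$, which equals $\mm$ because $R$ is standard graded: from $\tr_R(\omega_R)\supseteq yR$ the lemma yields $\tr_{R^{(k)}}\bigl((\omega_R)^{(k)}\bigr)\supseteq(\fkn^{k-1}yR)^{(k)}$. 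Reading off the degree-$1$ component of $R^{(k)}$---that is, the degree-$k$ component of $R$---gives $\bigl[(\fkn^{k-1}yR)^{(k)}\bigr]_1=y\,[\mm^{k-1}]_{k-1}=yR_{k-1}$, which contains $y^{k}$. As a power of a non-zerodivisor of $R$, the element $y^k$ is a non-zerodivisor of $R$, hence of the subring $R^{(k)}$; so $[\tr_{R^{(k)}}(\omega_{R^{(k)}})]_1$ contains a non-zerodivisor and $R^{(k)}$ is Teter by \autoref{cor:mmOMG}.

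Finally, the ``in particular'' follows by comparing multiplicities via \autoref{rem:trivial1}: one has $e(R^{(k)})=k^{\dim R-1}e(R)$, and in the stated ranges this exceeds $2$ for every $k\ge2$. Indeed, a positive-dimensional standard graded Gorenstein ring of minimal multiplicity has multiplicity at most $2$, which is why $e(R)\ge3$ is imposed when $\dim R=1$; when $\dim R=2$ the only case with $e(R^{(k)})\le2$ is $R\cong\kk[x,y]$ with $k=2$, where $R^{(2)}$ is the cone over a conic and hence Gorenstein. So in these ranges $R^{(k)}$ is not Gorenstein, and is therefore Teter by the first part. I expect the main obstacle to be the passage through \autoref{lem:Yeah}: one must correctly identify $\omega_{R^{(k)}}$ with $(\omega_R)^{(k)}$ so that the lemma governs the trace of the genuine canonical module, and then verify that the degree-$1$ piece of $(\fkn^{k-1}yR)^{(k)}$ actually meets the non-zerodivisors of $R^{(k)}$---which is exactly where the hypotheses that $R$ is standard graded and $\dim R\le2$ (so that \autoref{rem:trivial1} applies) are used.
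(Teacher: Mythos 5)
Your argument is essentially the paper's own proof: dispose of $\dim R=0$ and $k=1$ directly, produce a linear non-zerodivisor in $\tr_R(\omega_R)$ (the paper gets it from \autoref{thm:stdgradedTeter}, you from \autoref{cor:mmOMG} plus a separate check in the Gorenstein case), push it into $[\tr_{R^{(k)}}(\omega_{R^{(k)}})]_1$ via \autoref{lem:Yeah}, and conclude with \autoref{cor:mmOMG} and \autoref{rem:trivial1}. One caveat: the exceptional case you yourself identify ($R\cong\kk[x,y]$, $k=2$, where $R^{(2)}$ is the Gorenstein quadric cone) contradicts your closing claim that ``in these ranges $R^{(k)}$ is not Gorenstein''---the paper's proof has the same slip, asserting $k^{\dim R-1}e(R)\ge 3$ there---so the ``in particular'' clause really requires excluding the regular ring when $\dim R=2$ and $k=2$.
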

\begin{proof}
When $\dim(R)=0$,
note that $R=\kk\oplus R_1$ because $R$ has minimal multiplicity.
Thus $R^{(k)}=\kk$ is Gorenstein for $k \ge 2$; for $k=1$ the claim is tautological.
Thus we may assume $\dim(R)=1$ or $\dim(R)=2$.

Since $R$ is either Teter or Gorenstein, it is generically Gorenstein, so $\omega_R$ can be identified with a fractional ideal of $R$.
By \autoref{thm:stdgradedTeter} there exists a non-zerodivisor $f\in[\tr(\omega_R)]_1$.
By \cite[Theorem~3.4]{miyashita2024linear} we have
$\tr_{R^{(k)}}(\omega_R^{(k)}) \supset (\mm^{k-1}(f))^{(k)}$.
In particular, for the degree-one non-zerodivisor $f^k$ of $R^{(k)}$ we obtain
$f^k \in [\tr_{R^{(k)}}(\omega_R^{(k)})]_1$.
Hence, by \autoref{cor:mmOMG} and \autoref{rem:trivial1},  $R^{(k)}$ is either Gorenstein or Teter.
In particular, if either $\dim(R)=1$ with $e(R)\ge 3$ or $\dim(R)=2$, then for every $k\ge 2$ we have $k^{\dim(R)-1}e(R)\ge 3$, and hence $R^{(k)}$ is not Gorenstein by \autoref{rem:trivial1}. Therefore $R^{(k)}$ is Teter.
\end{proof}

% Notes:
% - Step 0 is standard: Veronese preserves Cohen–Macaulayness (via local cohomology).
% - Step 1 is the d\le2 characterization of minimal multiplicity: h-polynomial is linear.
% - Step 2: (R^{(q)})_1=R_q and e(R^{(q)})=q^{d-1}e(R) from the leading term of the Hilbert polynomial.

\begin{corollary}[{see \autoref{thm:examplesX}}]\label{thm:Veronese2}
Assume $R$ is a non-Gorenstein generically Gorenstein standard graded ring with minimal multiplicity.
If either $\dim(R)=1$ or $\tr_R(\omega_R)=\mm_R$ with $\dim(R)=2$, then $R^{(k)}$ is Teter for every $k\ge 2$.
\end{corollary}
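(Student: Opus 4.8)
The plan is to obtain this statement as a quick consequence of \autoref{thm:Veronese1}, whose closing assertion already delivers the conclusion once one knows that $R$ has minimal multiplicity and is Teter, together with $e(R)\ge 3$ in the one-dimensional case. Since $R$ is assumed non-Gorenstein, the alternative ``Teter or Gorenstein'' appearing in that theorem reduces to ``Teter,'' so the whole argument comes down to two routine checks: that $e(R)\ge 3$, and that $R$ itself is Teter.

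First I would reduce to the case in which $R_0$ is an infinite field by the faithfully flat base change $R\to R\otimes_{R_0}R_0(t)$: this preserves Cohen--Macaulayness, generic Gorensteinness, minimal multiplicity, standard-gradedness, the equality $\tr_R(\omega_R)=\mm_R$ (the canonical trace commutes with flat base change), and the Veronese operation, and---via the numerical and non-zerodivisor conditions of \autoref{cor:stdgradedTeter}---also Teterness, so nothing is lost. Next, since $R$ is non-Gorenstein with minimal multiplicity, its $h$-vector is $(1,\codim(R))$, so $R$ is level with $r(R)=r_0(R)=\codim(R)=e(R)-1$; as $R$ is non-Gorenstein, $r(R)\ge 2$, hence $e(R)=\codim(R)+1\ge 3$. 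This settles the numerical hypothesis in both dimensions.

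It remains to see that $R$ is Teter. If $\dim R=1$, this is exactly \autoref{remark:abcdefg}. If $\dim R=2$ and $\tr_R(\omega_R)=\mm_R$, then $[\tr_R(\omega_R)]_1=R_1$, and since $R$ is Cohen--Macaulay with $\depth R=2>0$ over the (now infinite) field $R_0$, homogeneous prime avoidance shows $R_1\not\subseteq\bigcup_{\fkp\in\Ass R}\fkp$: otherwise $R_1$, being covered by the finitely many $R_0$-subspaces $\fkp\cap R_1$, would lie in a single $\fkp$, forcing $\mm_R=(R_1)\subseteq\fkp\in\Ass R$ and contradicting $\depth R>0$. Hence $[\tr_R(\omega_R)]_1$ contains a non-zerodivisor, and \autoref{cor:mmOMG} shows $R$ is Teter. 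In either case, applying the last sentence of \autoref{thm:Veronese1} (whose two-dimensional instance uses $e(R)\ge 2$ to exclude $R^{(k)}$ from being Gorenstein) yields that $R^{(k)}$ is Teter for every $k\ge 2$.

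There is no genuine obstacle here: the substance sits already in \autoref{thm:Veronese1}, and behind it in \autoref{thm:stdgradedTeter}, \autoref{cor:mmOMG}, and \autoref{rem:trivial1}. The only points deserving a moment's attention are the reduction to an infinite residue field and the homogeneous prime-avoidance step that extracts a degree-one non-zerodivisor from $\tr_R(\omega_R)=\mm_R$ in the two-dimensional case---both entirely standard.
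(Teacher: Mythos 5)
Your argument is correct and follows essentially the same route as the paper: establish that $R$ itself is Teter via \autoref{remark:abcdefg} in dimension one and via \autoref{cor:mmOMG} in dimension two, then invoke \autoref{thm:Veronese1}. The extra details you supply (the reduction to an infinite field, the check that $e(R)\ge 3$, and the prime-avoidance step producing a degree-one non-zerodivisor in $\tr_R(\omega_R)=\mm_R$) are all sound and merely make explicit what the paper's two-line proof leaves implicit.
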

\begin{proof}
By \autoref{cor:mmOMG} and \autoref{remark:abcdefg}, $R$ is Teter under our assumptions. Hence, by \autoref{thm:Veronese1}, $R^{(k)}$ is Teter for every $k\ge 2$.
\end{proof}

\section{Teter numerical semigroup rings}\label{sect6}
In the \autoref{sect4} and \autoref{sect5}, we primarily investigated the Teter property of standard graded rings.
In this section, we investigate the Teter property of numerical semigroup rings, which serve as typical examples of rings that are not semi-standard graded. Our goal is to completely characterize the Teter property of numerical semigroup rings in terms of numerical semigroups. Furthermore, using this characterization, we discuss the relationships with almost Gorenstein and nearly Gorenstein rings and some examples.

Throughout this section, we retain \autoref{setup1} and use the notation of
Section 2 (see \autoref{subsec:nume}).
%We also assume that $R_H$ is not Gorenstein.
We also assume that $R_H$ is not Gorenstein.
Applying the definition of Teter to a numerical semigroup ring, it can be rephrased as follows.
\begin{remark}\label{numerical_rem1}
    Let $H$ be a numerical semigroup.
    The following are equivalent:
    \begin{enumerate}[\rm (1)]
        \item $R_H$ is Teter;
        \item $\omega_{R_H}\cong I$ and $\embdim(R_H/I)=\mu((\bar{t^{a_1}},\ldots,\bar{t^{a_n}})+I)\leq 1$ for some ideal $I\subsetneq R_H$;
        \item $t^{\gamma}\omega_{R_H}\subsetneq {R_H}$ and for at least $n-1$ indices $i\in [n]$, $t^{a_i}\in t^{\gamma}\omega_{R_H}$ for some $\gamma\in\mathbb{Z}_{>0}$.
    \end{enumerate}
\end{remark}
\begin{proof}
    %The equivalence of
    $\rm(1) \Leftrightarrow \rm(2)$ follows from \autoref{rem:3.2}~(c).
    %The equivalence between (1) and (2) follows from the remark.
    We show that (2) $\Rightarrow$ (3).
    Since there exists an isomorphism $\omega_{R_H} \cong I$ as graded $R_H$-modules and $\omega_{R_H}$ is a fractional ideal, there exists $\gamma \in \mathbb{Z}$ such that $t^{\gamma}\omega_{R_H} = I$.
    Here, since $\omega_{R_H} = \sum_{\alpha \in \PF(H)} t^{-\alpha}R_H$, we can choose $\gamma > 0$.
    Let $V = \mathfrak{m}/\mathfrak{m}^2 (= \sum_{i=1}^{n} k\overline{t^{a_i}})$ and $W = (I + \mathfrak{m}^2)/\mathfrak{m}^2$.
    Then the condition $\codim(R_H/I) \leq 1$ implies $\dim_k(V/W) = \dim_k(\mathfrak{m}/(I + \mathfrak{m}^2)) \leq 1$, which is equivalent to saying that the dimension of $W$ as a subspace of $V$ is at least $n-1$.
    Noting that $I$ is a monomial ideal, we see that $I$ contains at least $n-1$ elements of the form $t^{a_i}$.
    (3) $\Rightarrow$ (2). is clear.
\end{proof}

Regarding the condition $(3)$ of \autoref{numerical_rem1}, we define the following:
\begin{definition}
    For a numerical semigroup $H$, we define the following sets:
    \begin{enumerate}[\rm (1)]
        \item $T_1(H):=\{\gamma\in\mathbb{Z}_{>0}\,|\,t^{\gamma}\omega_{R_H}\subsetneq {R_H}\}$;
        \item $T_2(H):=\{\gamma\in\mathbb{Z}_{>0}\,|\,\text{for at least } n-1 \text{ indices } i\in [n], t^{a_i}\in t^{\gamma}\omega_{R_H}\}$;
        \item $T(H):=T_1(H)\cap T_2(H)$;
        \item Let $\gamma_{H}:=\min(T(H))\geq\frob(H)+a_1$ and $\delta_{H}:=\gamma_{H}-(\frob(H)+a_1)\geq0$.
    \end{enumerate}
    In other words, $R_H$ is Teter if and only if $T(H)\not=\emptyset$.
\end{definition}

\begin{proposition}\label{numerical_prop1}
    The following hold:
    \begin{enumerate}[\rm (1)]
        \item If $\gamma\in T_1(H)$, then $\frob(H)+a_1\leq \gamma$;
        \item If $R_H$ is Teter, then 
        $\delta_H>0$ if and only if $t^{a_1}\not\in t^{\gamma_H}\omega_{R_H}.$
    \end{enumerate}
\end{proposition}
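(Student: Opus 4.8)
I would reduce both parts to two bookkeeping facts about monomial supports over $R_H\subset\kk[t,t^{-1}]$. From $\omega_{R_H}=\sum_{h\in\NN\setminus H}\kk t^{-h}+\sum_{h>0}\kk t^{h}$ one reads off, for every $k\in\ZZ$, that $t^{k}\in\omega_{R_H}$ if and only if $-k\notin H$. From $\omega_{R_H}=\sum_{\alpha\in\PF(H)}R_Ht^{-\alpha}$ together with $t^{m}\in R_H\iff m\in H$, one gets that $t^{\gamma}\omega_{R_H}\subseteq R_H$ if and only if $\gamma-\alpha\in H$ for all $\alpha\in\PF(H)$; in particular, for such $\gamma$ one has $\gamma-\frob(H)\in H$ and $t^{\gamma}\in\omega_{R_H}^{-1}$. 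Throughout I use the standing hypothesis that $R_H$ is not Gorenstein, i.e.\ $\tr_{R_H}(\omega_{R_H})=\omega_{R_H}\omega_{R_H}^{-1}\subsetneq R_H$, and the elementary remark that an integer $m\ge\frob(H)$ lies outside $H$ precisely when $m=\frob(H)$.

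\textbf{Part (1).} Let $\gamma\in T_1(H)$. Pairing the generator $t^{-\frob(H)}\in\omega_{R_H}$ with $t^{\gamma}\in\omega_{R_H}^{-1}$ gives $t^{\gamma-\frob(H)}=t^{-\frob(H)}\cdot t^{\gamma}\in\omega_{R_H}\omega_{R_H}^{-1}=\tr_{R_H}(\omega_{R_H})$, and also $\gamma-\frob(H)\in H$. If $\gamma=\frob(H)$, the first of these reads $1\in\tr_{R_H}(\omega_{R_H})$, so $\tr_{R_H}(\omega_{R_H})=R_H$ and $R_H$ would be Gorenstein --- excluded. Hence $\gamma-\frob(H)$ is a nonzero element of $H$, so $\gamma-\frob(H)\ge a_1$, i.e.\ $\gamma\ge\frob(H)+a_1$. (This is exactly the inequality $\delta_H\ge 0$ asserted in the definition of $\delta_H$.)

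\textbf{Part (2).} Suppose $R_H$ is Teter, so that $T(H)\ne\emptyset$ and $\gamma_H=\min T(H)$ is defined; since $\gamma_H\in T_1(H)$, part (1) gives $\gamma_H-a_1\ge\frob(H)$, equivalently $\delta_H\ge 0$. By the first bookkeeping fact, $t^{a_1}\in t^{\gamma_H}\omega_{R_H}\iff t^{a_1-\gamma_H}\in\omega_{R_H}\iff \gamma_H-a_1\notin H$. Applying the elementary remark to $m=\gamma_H-a_1\ge\frob(H)$, this last condition is equivalent to $\gamma_H-a_1=\frob(H)$, i.e.\ to $\delta_H=0$. Negating and using $\delta_H\ge 0$, we conclude $t^{a_1}\notin t^{\gamma_H}\omega_{R_H}\iff\delta_H>0$, as claimed.

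\textbf{Main obstacle.} There is essentially no obstacle once the monomial descriptions of $\omega_{R_H}$ and $\omega_{R_H}^{-1}$ are recorded; the proof is pure bookkeeping. The single step that actually invokes a hypothesis is the exclusion of $\gamma=\frob(H)$ in part (1), where non-Gorensteinness enters via $\tr_{R_H}(\omega_{R_H})\ne R_H$. I would also point out that part (2) uses nothing about $T_2(H)$ beyond the fact that $\gamma_H$ is well defined: only the bound $\gamma_H\ge\frob(H)+a_1$ from part (1) is needed.
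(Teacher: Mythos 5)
Your proof is correct and follows essentially the same route as the paper: reduce everything to the monomial descriptions of $\omega_{R_H}$ and $t^{\gamma}\omega_{R_H}$, observe that $\gamma\in T_1(H)$ forces $\gamma-\frob(H)\in H$, rule out $\gamma=\frob(H)$ by non-Gorensteinness, and compare $\gamma_H-\frob(H)$ with $a_1$ for part (2). The only cosmetic difference is in excluding $\gamma=\frob(H)$: you note that $t^{0}\in\omega_{R_H}\omega_{R_H}^{-1}$ would make the canonical trace the unit ideal, whereas the paper picks a second pseudo-Frobenius number $\alpha\neq\frob(H)$ and derives $\frob(H)\in H$; both are one-line uses of the non-Gorenstein hypothesis.
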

\begin{proof}
(1): Assume $\gamma\in T_1(H)$ and $\frob(H)+a_1> \gamma$.
        By definition, $\gamma-\frob(H)\in H$. Since $\gamma-\frob(H)<a_1$, we must have $\gamma=\frob(H)$.
        Since $R_H$ is not Gorenstein, there exists a pseudo-Frobenius number $\alpha$ different from $\frob(H)$. However, this implies $\gamma-\alpha\in H$, which is a contradiction.
        
(2): If $\delta_H>0$, we have $\gamma_H-\frob(H)=\frob(H)+a_1+\delta_H-\frob(H)>a_1$, which implies $t^{a_1}\not\in t^{\gamma_H}\omega_{R_H}$.
        Conversely, if $\delta_H=0$, we have $\gamma_H-\frob(H)=
        %\frob(H)+a_1-\frob(H)
        a_1$, which implies $t^{a_1}\in t^{\gamma_H}\omega_{R_H}$.
\end{proof}

To prove the theorem, we show the following lemma.

\begin{lemma}\label{numerical_lemma1}
    Assume $\gamma\in T_1(H)$ for a numerical semigroup $H$.
    Then, $t^{a_i} \in t^{\gamma}\omega_{R_H}$ for some $i\in [n]$ if and only if there exists a unique $i_0\in [\type(H)]$ such that $a_i=\gamma-\alpha_{i_0}$.
\end{lemma}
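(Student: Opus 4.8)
The plan is to convert the membership $t^{a_i}\in t^{\gamma}\omega_{R_H}$ into an arithmetic condition on $H$, using the description of $\omega_{R_H}$ as a fractional ideal recorded in \autoref{subsec:nume}: there, $t^{m}\in\omega_{R_H}$ exactly when $m>0$ or $-m\in\NN\setminus H$; equivalently $t^{m}\notin\omega_{R_H}$ iff $m\le 0$ and $-m\in H$. The first step is to observe that $a_i\le\gamma$ for every minimal generator. Since $R_H$ is not Gorenstein we have $n\ge 2$, and if $a_n>\frob(H)+a_1$ then $a_n-a_1$ would exceed $\frob(H)$, hence lie in $H$, making $a_n=a_1+(a_n-a_1)$ a nontrivial decomposition in $H$ and contradicting minimality of $a_n$; combined with \autoref{numerical_prop1}(1), which gives $\gamma\ge\frob(H)+a_1$, this yields $a_i\le a_n\le\frob(H)+a_1\le\gamma$ for all $i$. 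Consequently $a_i-\gamma\le 0$, and the criterion above collapses to
\[
t^{a_i}\in t^{\gamma}\omega_{R_H}\iff t^{a_i-\gamma}\in\omega_{R_H}\iff \gamma-a_i\notin H .
\]

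The second and main step is the equivalence $\gamma-a_i\notin H\iff\gamma-a_i\in\PF(H)$. One direction is immediate from $\PF(H)\subseteq\NN\setminus H$. For the other, put $m:=\gamma-a_i\ (\ge 0)$ and assume $m\notin H$. Recalling that every element of $\ZZ\setminus H$ lies below a pseudo-Frobenius number in the order $x\preceq y\iff y-x\in H$ (equivalently, there is $\alpha\in\PF(H)$ with $\alpha-m\in H$), I would write
\[
a_i=(\gamma-\alpha)+(\alpha-m).
\]
Here $\gamma-\alpha\in H$ because $\gamma\in T_1(H)$ forces $t^{\gamma}\omega_{R_H}\subseteq R_H$, hence $\gamma-\alpha\in H$ for every $\alpha\in\PF(H)$; and $\alpha-m\in H$ by the choice of $\alpha$. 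Thus $a_i$ is a sum of two elements of $H$, and since $a_i$ is a minimal generator one of the two summands must vanish. If $\gamma-\alpha=0$ then $\gamma=\alpha\le\frob(H)$, contradicting $\gamma\ge\frob(H)+a_1>\frob(H)$; hence $\alpha-m=0$, that is, $\gamma-a_i=m=\alpha\in\PF(H)$.

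Combining the two steps gives $t^{a_i}\in t^{\gamma}\omega_{R_H}\iff\gamma-a_i\in\PF(H)=\{\alpha_1,\dots,\alpha_{\type(H)}\}$, which says precisely that $a_i=\gamma-\alpha_{i_0}$ for some $i_0\in[\type(H)]$; uniqueness of $i_0$ is automatic since $\alpha_1<\dots<\alpha_{\type(H)}$ are pairwise distinct. The only genuinely non-formal ingredient is the second step: using the minimal-generator property together with $\gamma\in T_1(H)$ to force the representation $a_i=(\gamma-\alpha)+(\alpha-m)$ to be trivial, and then invoking $\gamma>\frob(H)$ (from \autoref{numerical_prop1}(1)) to discard the degenerate alternative $\gamma=\alpha$. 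Everything else is bookkeeping with the fractional-ideal description of $\omega_{R_H}$, where the one point requiring care is ensuring $a_i\le\gamma$ so that the "$m>0$" branch in the description of $\omega_{R_H}$ is never triggered.
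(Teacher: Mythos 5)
Your proof is correct and rests on the same key mechanism as the paper's: expressing $a_i$ as a sum of two elements of $H$, one of which is $\gamma-\alpha\in H\setminus\{0\}$ (using $\gamma\in T_1(H)$ together with $\gamma>\frob(H)$ from \autoref{numerical_prop1}(1)), and then invoking minimality of the generator $a_i$ to force the other summand to vanish. The only difference is in the bookkeeping: the paper unpacks $t^{a_i}\in t^{\gamma}\omega_{R_H}=\sum_{\alpha\in\PF(H)}t^{\gamma-\alpha}R_H$ directly to get $a_i=(\gamma-\alpha_{i_0})+h$ with $h\in H$, whereas you detour through the gap description of $\omega_{R_H}$ (which obliges you to also verify $a_i\le\gamma$) and the standard fact that every gap is dominated by a pseudo-Frobenius number, arriving at the same decomposition.
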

\begin{proof}
    The direction $\Leftarrow$ is clear.
    We show $\Rightarrow$.
    Assume $t^{a_i}\in t^{\gamma}\omega_{R_H}$ for $i\in [n]$.
    Then there exists $i_0\in [\type(H)]$ such that $t^{a_i}\in t^{\gamma-\alpha_{i_0}}R_H$.
    Since $t^{\gamma-\alpha_{i_0}}R_H=\sum_{h\in H}t^{\gamma-\alpha_{i_0}+h}$, there exists $h\in H$ such that $a_i=\gamma-\alpha_{i_0}+h$.
    Since $\gamma\in T_1(H)$, we must have $\gamma-\alpha_i\in H$ for any $i\in [\type(H)]$.
    If $\gamma-\alpha_{i_0}=0$, it contradicts the fact that $\frob(H)<\gamma$ from \autoref{numerical_prop1}~(1), so $\gamma-\alpha_{i_0}\in H\setminus\{0\}$.
    Therefore, by the minimality of the generator $a_i$, we obtain $h=0$, and thus $a_i=\gamma-\alpha_{i_0}$.
    Moreover, if there exists $\alpha_{i_1}$ and $\alpha_{i_2}$ which satisfies $a_i=\gamma-\alpha_{i_1}=\gamma-\alpha_{i_2}$, then $\alpha_{i_1}=\alpha_{i_2}$.
    Thus such $i_0$ is unique.
\end{proof}

\begin{theorem}\label{thm:numeOZAKI}
    Let $H$ be a numerical semigroup. Then the following are equivalent:
    \begin{enumerate}[\rm (1)]
        \item $R_H$ is Teter;
        \item $\PF(H)=\{\alpha_1,\ldots,\alpha_{n-1}\}$ where  $\alpha_1<\ldots<\alpha_{n-1}=\frob(H)$, and the following holds:\\
        There exists $s\in [n]$ such that for all $j$ with $2\leq j\leq n$,
        \begin{equation*}
        \frob(H)+a_1+\delta =
            \left\{ \,
                \begin{array}{cc}
                a_j+\alpha_{n-j} & (j<s); \\
                a_j+\alpha_{n+1-j} & (j>s),
                \end{array}
            \right.
        \end{equation*}
        where
        \begin{equation*}
        \delta =
            \left\{ \,
                \begin{array}{cc}
                a_2-a_1 & (s=1); \\
                0 & (s\not=1).
                \end{array}
            \right.
        \end{equation*}
        % \item[(3) (TBD)] $\exists i\in [n] \,\text{s.t.}\,R_H/(t^{a_i})$ is a Teter ring.
    \end{enumerate}
    In particular, when $R_H$ is Teter, the only possible values for $\delta_H$ are $0$ or $a_2-a_1$.
\end{theorem}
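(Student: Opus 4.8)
The plan is to work directly with the sets introduced above: by \autoref{numerical_rem1} and the definition of $T(H)$, the ring $R_H$ is Teter if and only if $T(H)=T_1(H)\cap T_2(H)\neq\emptyset$, and \autoref{numerical_lemma1} is the bridge between the two halves of $T(H)$ — for $\gamma\in T_1(H)$ it says that $t^{a_i}\in t^{\gamma}\omega_{R_H}$ holds exactly when $a_i=\gamma-\alpha_{i_0}$ for a (necessarily unique) $\alpha_{i_0}\in\PF(H)$. So both implications reduce to analysing, for $\gamma\in T(H)$, which generators $a_i$ can be written as $\gamma-\alpha_{i_0}$.

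For $(1)\Rightarrow(2)$, assume $R_H$ is Teter and fix $\gamma\in T(H)$. Since $R_H$ is Teter and $\dim R_H=1>0$, \autoref{cor:gradedtype} gives $\type(H)=r(R_H)=\pd(R_H)=\embdim(R_H)-\dim(R_H)=n-1$, which is the asserted shape of $\PF(H)$. Now $\gamma\in T_2(H)$ provides at least $n-1$ indices $i$ with $t^{a_i}\in t^{\gamma}\omega_{R_H}$; \autoref{numerical_lemma1} assigns to each such $i$ a unique $\sigma(i)\in[\type(H)]=[n-1]$ with $a_i=\gamma-\alpha_{\sigma(i)}$, and $\sigma$ is injective since distinct $a_i$ are distinct. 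Counting forces $\sigma$ to be a bijection from an $(n-1)$-element set $S'\subseteq[n]$ onto $[n-1]$, so there is a unique ``missing'' index $s\in[n]\setminus S'$. Because $a_i=\gamma-\alpha_{\sigma(i)}$ with $(a_i)$ and $(\alpha_k)$ both strictly increasing, $\sigma$ is order-reversing; listing $S'=[n]\setminus\{s\}$ in increasing order and matching it against $n-1,n-2,\dots,1$ yields exactly $\gamma=a_j+\alpha_{n-j}$ for $j<s$ and $\gamma=a_j+\alpha_{n+1-j}$ for $j>s$. Evaluating at the smallest index of $S'$ — namely $j=1$ when $s\neq1$, which forces $\gamma=\frob(H)+a_1$, and $j=2$ when $s=1$, which forces $\gamma=\frob(H)+a_2$ — identifies $\gamma$ with $\frob(H)+a_1+\delta$ for the stated $\delta$, proving $(2)$. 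Since this applies to \emph{every} $\gamma\in T(H)$, we get $T(H)\subseteq\{\frob(H)+a_1,\frob(H)+a_2\}$; hence $\gamma_H=\min T(H)$ lies in this two-element set, and $\delta_H=\gamma_H-\frob(H)-a_1\in\{0,a_2-a_1\}$, which is the final ``in particular'' assertion.

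For $(2)\Rightarrow(1)$, set $\gamma:=\frob(H)+a_1+\delta$ and verify $\gamma\in T(H)$. Combining the displayed relations with $\gamma-\alpha_{n-1}=\gamma-\frob(H)\in\{a_1,a_2\}$, one checks that $\gamma-\alpha_k\in\{a_1,\dots,a_n\}\subseteq H$ for every $k\in[n-1]$, hence $t^{\gamma}\omega_{R_H}\subseteq R_H$; the inclusion is proper since $R_H$ is not Gorenstein (so $\omega_{R_H}$ is not principal), giving $\gamma\in T_1(H)$. The same relations give $t^{a_j}=t^{\gamma}t^{-\alpha_k}\in t^{\gamma}\omega_{R_H}$ for each $j\in[n]\setminus\{s\}$, i.e.\ for $n-1$ values of $j$, so $\gamma\in T_2(H)$. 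Thus $T(H)\neq\emptyset$ and $R_H$ is Teter.

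The step I expect to be the main obstacle is the index bookkeeping in $(1)\Rightarrow(2)$: one must check that the order-reversing bijection $\sigma$ really produces the shift $n-j$ to the left of the missing index $s$ but $n+1-j$ to its right, and that the two regimes are compatible; the point where they glue — $j=1$ if $s\neq1$, $j=2$ if $s=1$ — is exactly what separates $\delta=0$ from $\delta=a_2-a_1$, so the case split $s=1$ versus $s\neq1$ must be carried through the whole argument. Everything else (the type computation via \autoref{cor:gradedtype}, and the membership checks $\gamma\in T_1(H)$, $\gamma\in T_2(H)$ in the converse) is routine once \autoref{numerical_lemma1} is in hand.
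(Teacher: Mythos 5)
Your proposal is correct and follows essentially the same route as the paper: both reduce Teterness to $T(H)\neq\emptyset$ via \autoref{numerical_rem1}, use \autoref{cor:gradedtype} to pin down $\type(H)=n-1$, invoke \autoref{numerical_lemma1} to match generators $a_i$ with pseudo-Frobenius numbers, and verify the converse by exhibiting $t^{\frob(H)+a_1}\omega_{R_H}$ (resp.\ $t^{\frob(H)+a_2}\omega_{R_H}$) as the witnessing ideal. Your write-up of the order-reversing bijection and the resulting index shifts is in fact more explicit than the paper's, but the underlying argument is identical.
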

\begin{proof}
(1)$\Rightarrow$ (2):
From \autoref{cor:gradedtype}, $\type(H)=\emb(H)-1$.
        Therefore, $\PF(H)$ consists of $n-1$ elements, say $\alpha_1<\ldots<\alpha_{n-1}=\frob(H)$.
        From \autoref{numerical_rem1} and \autoref{numerical_lemma1},
        The ring $R_H$ is Teter if and only if there exists a positive integer 
        $\gamma$ such that $t^{\gamma}\omega_R \subsetneq R$ and $t^{a_i} \in t^{\gamma}\omega_{R_H}$ 
        holds for exactly $n-1$ indices $i \in [n]$.
        Let $t^{a_s}$ be the element not contained in $t^{\gamma_H}\omega_R$,
        where $s\in[n]$.
        If $s=1$, then, by the \autoref{numerical_lemma1} and the order relation between $a_i$ and $\alpha_i$,
we have $a_j = \gamma_H - \alpha_{n+1-j}$ for every integer $j$ with $2 \le j \le n$.
In particular, taking $j=2$ gives $a_2 = \gamma_H - \alpha_{n-1}$, hence
$\delta_H = a_2 - a_1$.
If $s \neq 1$, then, by the \autoref{numerical_prop1}, $\delta_H = 0$, and the claim follows as in the case $s=1$.

(2)$\Rightarrow$ (1):
When $s=1$, considering $t^{\frob(H)+a_2}\omega_{R_H}$ shows that $R_H$ is Teter.
        When $s\not=1$, considering $t^{\frob(H)+a_1}\omega_{R_H}$ shows that $R_H$ is Teter.
\end{proof}
% \begin{remark}
%     Note that for the numerical semigroup ring $\widehat{R_H}:=k[[H]]$, $\widehat{R_H}$ being Teter is also equivalent to satisfying (2) above.
% \end{remark}
% \begin{corollary}
%     Let $H$ be a numerical semigroup. The following are equivalent:
%     \begin{enumerate}[\rm (1)]
%         \item $R_H$ is Teter and $\delta_H>0$;
%         \item $\frob(H)+a_2=a_j+\alpha_{n+1-j} \, \text{for all}~j \text{ with } 3\leq j\leq n$.
%     \end{enumerate}
% \end{corollary}
\begin{corollary}\label{numerical_cor1}
    When $H$ is a numerical semigroup with minimal multiplicity.
    The following are equivalent:
    \begin{enumerate}[\rm (1)]
        \item $R_H$ is Teter;
        \item $\PF(H)=\{\alpha_1,\ldots,\alpha_{n-1}\}$ where  $\alpha_1<\ldots<\alpha_{n-1}=\frob(H)$, and the following holds:\\ There exists $s\in \{1,n\}$ such that for all $j$ with $2\leq j\leq n$,
        \begin{equation*}
        \frob(H)+\delta =
            \left\{ \,
                \begin{array}{cc}
                \alpha_{j-1}+\alpha_{n-j} & (j<s); \\
                \alpha_{j-1}+\alpha_{n+1-j} & (j>s);
                \end{array}
            \right.
        \end{equation*}
        where
        \begin{equation*}
        \delta =
            \left\{ \,
                \begin{array}{cc}
                \alpha_1 & (s=1); \\
                0 & (s\not=1).
                \end{array}
            \right.
        \end{equation*}
        \item[\rm(3)]  $\PF(H)=\{\alpha_1,\ldots,\alpha_{n-1}\}$ where  $\alpha_1<\ldots<\alpha_{n-1}=\frob(H)$, and the following holds:\\ There exists $s\in \{1,n\}$ such that for all $j$ with $2\leq j\leq n$,
        \begin{equation*}
        a_n+a_1+\delta=
            \left\{ \,
                \begin{array}{cc}
                a_{j}+a_{n+1-j} & (j<s); \\
                a_{j}+a_{n+2-j} & (j>s);
                \end{array}
            \right.
        \end{equation*}
        where
        \begin{equation*}
        \delta =
            \left\{ \,
                \begin{array}{cc}
                a_2-a_1 & (s=1); \\
                0 & (s\not=1).
                \end{array}
            \right.
        \end{equation*}
    \end{enumerate}
    In particular, $H$ is almost symmetric if and only if $R_H$ is Teter and $\delta_H=0$.
\end{corollary}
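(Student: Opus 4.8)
The plan is to derive all three equivalences directly from \autoref{thm:numeOZAKI} by feeding in the description of $\PF(H)$ available when $H$ has maximal embedding dimension, the only point that needs genuine work being the determination of the index $s$ appearing in that theorem. First I set up the dictionary. It is classical that $H$ has minimal multiplicity exactly when $a_1=\embdim(H)=n$, equivalently $\Ap(H,a_1)=\{0,a_2,\dots,a_n\}$; and since each $a_i$ with $i\ge 2$ is a minimal generator, one has $a_i-a_j\notin H$ for distinct $i,j\ge 2$, so $a_2,\dots,a_n$ are precisely the maximal elements of $\Ap(H,a_1)$ in the Apéry order and hence $\PF(H)=\{a_i-a_1 : 2\le i\le n\}$. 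In particular $\#\PF(H)=n-1$ holds automatically, so the clause ``$\PF(H)=\{\alpha_1,\dots,\alpha_{n-1}\}$'' in (2) and (3) imposes nothing, and writing $\PF(H)=\{\alpha_1<\cdots<\alpha_{n-1}\}$ we obtain the order-preserving dictionary $\alpha_j=a_{j+1}-a_1$ and $\frob(H)=a_n-a_1$.

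\textbf{Key step.} Assuming $R_H$ is Teter, I claim the index $s\in[n]$ of \autoref{thm:numeOZAKI} lies in $\{1,n\}$. By the machinery of that theorem, $s$ is the unique index with $t^{a_s}\notin t^{\gamma_H}\omega_{R_H}$. Suppose $2\le s\le n-1$. Since $s\ne 1$ the generator $t^{a_1}$ is ``covered'', so \autoref{numerical_prop1}(2) forces $\delta_H=0$ and hence $\gamma_H=\frob(H)+a_1=a_n$ by the dictionary. Since $s\ne n$ the generator $t^{a_n}$ is also covered, i.e.\ $t^{a_n}\in t^{\gamma_H}\omega_{R_H}=t^{a_n}\omega_{R_H}$, which means $1\in\omega_{R_H}$; but $1\notin\omega_{R_H}$ because $0\in H$. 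This contradiction gives $s\in\{1,n\}$. I expect this to be the main obstacle: it is precisely here that minimal multiplicity is used (for a general numerical semigroup $s$ may be any index in $[n]$), and the argument hinges on the identity $\gamma_H-a_n=\frob(H)+a_1-a_n=0$, special to the MED case.

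\textbf{The equivalences.} With $s\in\{1,n\}$ the rest is bookkeeping. Substituting $a_j=\alpha_{j-1}+a_1$ (for $j\ge 2$) and $\frob(H)+a_1=a_n$ into the equalities of \autoref{thm:numeOZAKI}(2) cancels the extra $a_1$'s and yields exactly the $\alpha$-equalities $\frob(H)+\delta=\alpha_{j-1}+\alpha_{n-j}$ (for $j<s$) and $\frob(H)+\delta=\alpha_{j-1}+\alpha_{n+1-j}$ (for $j>s$), with $\delta=a_2-a_1=\alpha_1$ when $s=1$ and $\delta=0$ otherwise; this is condition (2). Substituting instead $\alpha_{j-1}=a_j-a_1$ throughout turns the same equalities into $a_n+a_1+\delta=a_j+a_{n+1-j}$ (for $j<s$) and $a_n+a_1+\delta=a_j+a_{n+2-j}$ (for $j>s$), which is condition (3). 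Conversely, either (2) or (3), together with the automatic fact $\#\PF(H)=n-1$, is precisely condition \autoref{thm:numeOZAKI}(2), so $R_H$ is Teter. Hence (1)$\Leftrightarrow$(2)$\Leftrightarrow$(3).

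\textbf{The almost-symmetric case.} By \autoref{rem:2.16}, $H$ is almost symmetric iff $\frob(H)=\alpha_i+\alpha_{n-1-i}$ for all $1\le i\le n-2$. Taking $s=n$ (hence $\delta=0$) in (2) gives the pairing $\frob(H)=\alpha_{j-1}+\alpha_{n-j}$ for $2\le j\le n-1$, which with $i=j-1$ is exactly that condition. Since $s\in\{1,n\}$ by the key step and $s=1$ forces $\delta_H=a_2-a_1>0$, we have $\delta_H=0$ iff $s=n$; and when (2) holds with $s=n$ the proof of \autoref{thm:numeOZAKI} exhibits $\gamma=\frob(H)+a_1\in T(H)$, so $\gamma_H\le\frob(H)+a_1$, whence $\gamma_H=\frob(H)+a_1$ by \autoref{numerical_prop1}(1) and $\delta_H=0$. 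Combining, $H$ is almost symmetric iff $R_H$ is Teter with $\delta_H=0$, as claimed.
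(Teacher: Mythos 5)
Your proof is correct and takes essentially the same route as the paper: both reduce everything to \autoref{thm:numeOZAKI} and isolate the key point that the index $s$ must lie in $\{1,n\}$, assuming $1<s<n$, invoking \autoref{numerical_prop1} to get $\delta_H=0$ (hence $\gamma_H=\frob(H)+a_1=a_n$ under minimal multiplicity), and deriving a contradiction from the coverage of $a_n$. The only difference is cosmetic --- the paper gets the contradiction $a_1=a_2$ by substituting $j=n$ into the numerical identity, whereas you observe that $t^{a_n}\in t^{a_n}\omega_{R_H}$ would force $1\in\omega_{R_H}$ --- and you additionally spell out the dictionary $\alpha_j=a_{j+1}-a_1$ and the almost-symmetric clause that the paper leaves implicit.
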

\begin{proof}
    We show that $s$ takes values only in $\{1, n\}$.
    Assume $1<s<n$. Then $\delta_H=0$. Letting $j=n$, we get $a_n+a_1=a_n+a_2$, which is a contradiction.
\end{proof}

\begin{corollary}\label{cor:minimalnumefunny}
All numerical semigroup rings with minimal multiplicity and embedding dimension $3$ are Teter.
\end{corollary}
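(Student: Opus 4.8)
The plan is to deduce the statement from the combinatorial characterization of Teterness for minimal-multiplicity numerical semigroup rings, \autoref{numerical_cor1}. Write $H=\langle a_1,a_2,a_3\rangle$ with $a_1<a_2<a_3$ and $\gcd(a_1,a_2,a_3)=1$, so that $\embdim(H)=3$ and $\dim(R_H)=1$. First I would record the two structural facts that make the criterion applicable. Since $e(R_H)=a_1$ and $R_H$ has minimal multiplicity, $a_1=e(R_H)=\embdim(R_H)-\dim(R_H)+1=3$, and hence $\Ap(H,a_1)=\{0,a_2,a_3\}$. Next, because $R_H$ is not Gorenstein of embedding dimension $3$, Herzog's structure theorem (\cite{herzog1970generators}) shows the defining ideal of $R_H$ is generated by the $2\times2$ minors of a $2\times3$ matrix, so the Hilbert--Burch resolution of $R_H$ reads $0\to S^{2}\to S^{3}\to S\to R_H\to 0$ and therefore $r(R_H)=2$, that is, $\#\PF(H)=2$. (Equivalently, one can check directly that non-Gorensteinness forces $a_2$ and $a_3$ to be incomparable in the Apéry poset $(\Ap(H,a_1),\le_H)$, so $\PF(H)=\{a_2-a_1,\,a_3-a_1\}$.)

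With these in hand, the conclusion is a bookkeeping check of condition (3) of \autoref{numerical_cor1} with $n=3$: the requirement $\#\PF(H)=n-1$ is exactly $r(R_H)=2$, which holds, and for the displayed identity I would take $s=1$, so that $\delta=a_2-a_1$ and no index $j$ satisfies $2\le j<s$. For the indices $j\in\{2,3\}$, i.e.\ those with $j>s$, the required equality $a_3+a_1+\delta=a_j+a_{5-j}$ reduces to $a_2+a_3=a_j+a_{5-j}$, which is valid both for $j=2$ and for $j=3$. Hence condition (3) is met and $R_H$ is Teter. As an alternative to invoking \autoref{numerical_cor1}, one can exhibit the witnessing ideal by hand: with $\PF(H)=\{a_2-a_1,a_3-a_1\}$ one computes $t^{\,a_2+a_3-a_1}\,\omega_{R_H}=(t^{a_2},t^{a_3})R_H=:I\subsetneq R_H$, so $\omega_{R_H}\cong I$, while $\mathfrak m_{R_H}/I$ is generated by the image of $t^{a_1}$; thus $\embdim(R_H/I)\le 1=\dim R_H$, and $R_H$ is Teter by \autoref{numerical_rem1}.

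I do not anticipate a real obstacle. The two inputs---that the multiplicity of $R_H$ is $a_1$ and that a non-Gorenstein embedding-dimension-$3$ numerical semigroup ring has Cohen--Macaulay type exactly $2$---are standard, and after them everything collapses to the observation that, when $n=3$, the only indices occurring in \autoref{numerical_cor1}(3) are $j=2$ and $j=n$, for which the choice $s=1$ makes the required identity tautological. The one place to stay careful is matching $s=1$ and $\delta=a_2-a_1$ against the two branches of the displayed formula; if the direct route is preferred, one must also justify $\PF(H)=\{a_2-a_1,a_3-a_1\}$, which relies on the fact that, when $\embdim(H)=3$ and the multiplicity is minimal, necessarily $a_3<2a_2$, forcing $a_2,a_3$ to be the two maximal elements of the Apéry poset.
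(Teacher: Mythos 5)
Your proof is correct and, in its ``alternative'' form, coincides with the paper's own argument: the paper verifies directly that $\gamma=\frob(H)+a_2=a_2+a_3-a_1$ lies in $T(H)$ by computing $t^{\gamma}\omega_{R_H}=(t^{a_2},t^{a_3})R_H\subsetneq R_H$. Your primary route through condition (3) of \autoref{numerical_cor1} with $s=1$ is an equivalent repackaging of the same check, and your supporting facts --- $a_1=3$, $\PF(H)=\{a_2-a_1,\,a_3-a_1\}$, and $r(R_H)=2$ --- are all correctly justified.
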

\begin{proof}
    When $H$ has minimal multiplicity, we have $\PF(H)=\{\alpha=a_2-a_1, \frob(H)=a_3-a_1\}$. 
    In this case, considering $t^{\frob(H)+a_2}\omega_{R_H}$, we obtain
    \begin{align*}
        t^{\frob(H)+a_2}\omega_{R_H} 
        &= t^{\frob(H)+a_2}\left(\sum_{h\in H}kt^{h-\alpha}+\sum_{h\in H}kt^{h-\frob(H)}\right) \\
        &= \sum_{h\in H}kt^{h-(a_2-a_1)+(a_3-a_1+a_2)} + \sum_{h\in H}kt^{h+a_2}\\
        &= \sum_{h\in H}kt^{h+a_3} + \sum_{h\in H}kt^{h+a_2} \subsetneq R_H.
    \end{align*}
    Since it is clear that $t^{a_2}, t^{a_3} \in t^{\frob(H)+a_2}\omega_{R_H}$, we conclude that $\frob(H)+a_2 \in T(H)$.
\end{proof}

\begin{corollary}
    Let $H$ be a pseudo-symmetric numerical semigroup of embedding dimension 3.
    Then, $R_H$ is Teter if and only if there exists a pair of generators whose difference is $\frac{\frob(H)}{2}$.
\end{corollary}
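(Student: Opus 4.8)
The approach is to specialize \autoref{thm:numeOZAKI} to $n=\emb(H)=3$, using that a pseudo-symmetric semigroup has an extremely rigid pseudo-Frobenius set. First I would record that, by definition of pseudo-symmetry, $\frob(H)$ is even, $\PF(H)=\{\alpha_1,\alpha_2\}$ with $\alpha_1=\frac{\frob(H)}{2}$ and $\alpha_2=\frob(H)$, and in particular $\type(H)=2=n-1$. Hence the first clause of condition~(2) of \autoref{thm:numeOZAKI}---that $\PF(H)$ consists of $n-1$ elements with largest one $\frob(H)$---holds automatically, and the whole problem reduces to deciding, for each of the three possible values $s\in\{1,2,3\}$, whether the displayed identities can be satisfied.

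Next I would carry out this three-case analysis, substituting $\alpha_1=\frac{\frob(H)}{2}$ and $\alpha_2=\frob(H)$ into the formula for the two indices $j\in\{2,3\}$, remembering that the index $j=s$ imposes no constraint. For $s=1$ one has $\delta=a_2-a_1$, so the left-hand side is $\frob(H)+a_2$; the equation at $j=2$ then reads $\frob(H)+a_2=a_2+\alpha_2$, a tautology, while the equation at $j=3$ reads $\frob(H)+a_2=a_3+\alpha_1$, i.e.\ $a_3-a_2=\frac{\frob(H)}{2}$. For $s=2$ one has $\delta=0$, no constraint at $j=2$, and the constraint at $j=3$ becomes $\frob(H)+a_1=a_3+\alpha_1$, i.e.\ $a_3-a_1=\frac{\frob(H)}{2}$. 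For $s=3$ one again has $\delta=0$, no constraint at $j=3$, and the constraint at $j=2$ becomes $\frob(H)+a_1=a_2+\alpha_1$, i.e.\ $a_2-a_1=\frac{\frob(H)}{2}$. Consequently condition~(2) of \autoref{thm:numeOZAKI} holds if and only if at least one of $a_2-a_1$, $a_3-a_1$, $a_3-a_2$ equals $\frac{\frob(H)}{2}$; since $\emb(H)=3$, these are exactly the differences of all pairs of generators of $H$, which is the asserted characterization.

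I do not expect a genuine obstacle here: the content is a bookkeeping exercise in the indexing of \autoref{thm:numeOZAKI}. The only points requiring care are the convention that $j=s$ contributes no equation---this is what makes the cases $s=2$ and $s=3$ each collapse to a single linear relation among the $a_i$---and the observation that in the case $s=1$ the $j=2$ relation becomes vacuous once $\delta=a_2-a_1$ is substituted, so that, consistently with the last sentence of \autoref{thm:numeOZAKI}, $\delta_H\in\{0,a_2-a_1\}$ is forced in all cases.
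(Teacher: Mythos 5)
Your proposal is correct and follows essentially the same route as the paper: both specialize \autoref{thm:numeOZAKI} to $n=3$ with $\PF(H)=\{\tfrac{\frob(H)}{2},\frob(H)\}$ and check that the cases $s=1,2,3$ reduce respectively to $a_3-a_2$, $a_3-a_1$, $a_2-a_1$ being $\tfrac{\frob(H)}{2}$. The paper states these three equivalences without writing out the substitution, so your explicit bookkeeping is just a more detailed version of the same argument.
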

\begin{proof}
Assume that $R_H$ is Teter. 
By \autoref{thm:numeOZAKI}, the condition for $s=1,2,3$ is equivalent to
\[
a_3-a_2=\frac{\frob(H)}{2},\quad
a_3-a_1=\frac{\frob(H)}{2},\quad
a_2-a_1=\frac{\frob(H)}{2},
\]
respectively. Hence the assertion follows.
The converse is obtained by reversing the above argument.
\end{proof}

\begin{example}
    Let $H$ be a numerical semigroup with minimal multiplicity. The following conditions are equivalent:
    \begin{enumerate}[\rm (1)]
        \item $\frob(H)+a_1\in T(H)$;
        %\item[(b)] $\#T(H)=2$
        \item $\delta_H=0$;
        \item $H$ is almost symmetric;
        \item $t^{\gamma_H}\omega_{R_H}=(t^{a_1},t^{a_2},\ldots,t^{a_{n-1}})$.
    \end{enumerate}
\end{example}

Moreover, we provide examples of numerical semigroup rings having arbitrary codimension as follows.

\begin{example}
    For an integer $n \ge 2$ and positive integers $a, s, d > 0$ such that $(a,d)=1$, define a numerical semigroup $H$ as
    $H=\langle a,sa+d, \ldots, sa+nd \rangle$.
    Here, $\emb(H)=n+1$.
    Let $r$ be the remainder and $q$ be the quotient when $a$ is divided by $n$.
    According to \cite[Theorem 3.1 (2)]{numata2014numerical} or \cite{matthews2004numerical}, if $r=1$, then $\type(H)=n$, and in this case $R_H$ is Teter (though not necessarily almost Gorenstein).
\end{example}
\begin{proof}
    In our setting, we have $\PF(H)=\{saq+id-a : (q-1)n+1\leq i \leq qn\}$ (see \cite[Theorem 3.1 (2)]{numata2014numerical} or \cite{matthews2004numerical}).
    By \autoref{thm:numeOZAKI}, if we set $s=1$, the equality
    \[
    \alpha_n+a_2=saq+qnd-a+sa+d=sa+(j-1)d+saq+((q-1)n+n+1-j)d-a=a_j+\alpha_{n+1-j}
    \]
    holds for any $2\leq j\leq n$.
    Therefore, we conclude that $R_H$ is Teter.
\end{proof}

% \subsection{Propositions}
\begin{proposition}\label{propprop:numenume}
    Let $H=\langle n_1,n_2,n_3\rangle$ be a non-symmetric numerical semigroup.
    %of embedding dimension 3.
    The following hold:
    \begin{enumerate}[\rm (1)]
        \item If $R_H$ is nearly Gorenstein but not almost Gorenstein, then $R_H$ is Teter;
        \item When $R_H$ is almost Gorenstein, the Hilbert-Burch matrix of $R_H$ can be expressed as
        $A=
        \begin{pmatrix}
            x_1^a & x_2^b & x_3^c\\
            x_2 & x_3 & x_1
        \end{pmatrix}$
        where a,b,c is positive integers.
        In this case, $R_H$ is Teter if and only if $\min\{a,b,c\}=1$.
    \end{enumerate}
\end{proposition}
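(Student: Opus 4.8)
The plan is to deduce both parts from the arithmetic criterion for Teterness in \autoref{thm:numeOZAKI}. Since $\emb(H)=3$ and $R_H$ is not Gorenstein, the defining ideal of $R_H$ is not a complete intersection, so by the Hilbert--Burch theorem it is the ideal of $2\times 2$ minors of a $2\times 3$ matrix $A$ and $\type(H)=\#\PF(H)=2$; write $\PF(H)=\{\alpha_1,\alpha_2\}$ with $\alpha_1<\alpha_2=\frob(H)$ and let $n_1<n_2<n_3$ be the minimal generators. Specializing \autoref{thm:numeOZAKI} to $n=3$ and running through the three possibilities $s\in\{1,2,3\}$ (recall $\delta\in\{0,\,n_2-n_1\}$, and for $j=3$ one always has $j\ge s$), one sees that $R_H$ is Teter if and only if $\alpha_2-\alpha_1\in\{\,n_2-n_1,\ n_3-n_2,\ n_3-n_1\,\}$, i.e.\ $\alpha_2-\alpha_1$ is a difference of two minimal generators; this is the equality I will check.

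To read off the relevant invariants I would use Herzog's structure theorem: after permuting the variables $A$ may be brought to the cyclic shape $\begin{pmatrix} x_1^{p_1} & x_2^{p_2} & x_3^{p_3}\\ x_2^{q_1} & x_3^{q_2} & x_1^{q_3}\end{pmatrix}$ with all $p_i,q_i\ge 1$. Comparing degrees of the three minors yields two independent linear relations which, up to an overall common factor (irrelevant, since it divides both sides of the criterion), determine $(n_1,n_2,n_3)$ explicitly. Dualizing the minimal graded free resolution $0\to S^{2}\to S^{3}\to S\to R_H\to 0$ over $S=\kk[x_1,x_2,x_3]$ (with $\deg x_i=n_i$) identifies $\omega_{R_H}$ with $\operatorname{coker}(A)$ twisted by the canonical module $\omega_S=S(-N)$, $N=n_1+n_2+n_3$, whence $\PF(H)=\{D_1-N,\ D_2-N\}$ where $D_1,D_2$ are the degrees of the two syzygies (the rows of $A$); a short computation gives $\alpha_2-\alpha_1=|D_1-D_2|=|p_1n_1-q_1n_2|$.

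For part (2): since $\type(H)=2$, ``$R_H$ almost Gorenstein'' means $H$ is almost symmetric, which forces $\alpha_1=\frob(H)/2$, i.e.\ $H$ is pseudo-symmetric; by the known structure of pseudo-symmetric numerical semigroups of embedding dimension $3$, $A$ can be brought to the stated form $\begin{pmatrix} x_1^a & x_2^b & x_3^c\\ x_2 & x_3 & x_1\end{pmatrix}$ with $a,b,c\ge 1$. Running the paragraph above with $q_1=q_2=q_3=1$ gives $(n_1,n_2,n_3)\propto(bc+c+1,\ ac+a+1,\ ab+b+1)$ and $\PF(H)=\{abc-1,\ 2abc-2\}$, so $\alpha_2-\alpha_1=abc-1$. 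If $\min\{a,b,c\}=1$, say $c=1$, then $n_3-n_1=(ab+b+1)-(bc+c+1)=ab-1=abc-1$ (the cases $a=1$ and $b=1$ are handled identically, via $n_1-n_2$ and $n_2-n_3$ respectively), so the criterion holds and $R_H$ is Teter. Conversely, if $a,b,c\ge 2$ then $abc-1$ strictly exceeds every pairwise difference $|n_i-n_j|$ --- e.g.\ $|n_1-n_2|\le\max(ac+a,\ bc+c)<abc-1$ --- so the criterion fails and $R_H$ is not Teter.

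For part (1): I would invoke the classification of nearly Gorenstein numerical semigroups of embedding dimension $3$ that are not almost Gorenstein; such $H$ have, up to relabeling, a Hilbert--Burch matrix of the form $\begin{pmatrix} x_1 & x_2 & x_3^d\\ x_2 & x_3 & x_1^e\end{pmatrix}$ with $d\ne e$. Running the machinery of the second paragraph then gives $(n_1,n_2,n_3)\propto(2d+1,\ d+e+1,\ 2e+1)$ and $\PF(H)=\{2de-e-1,\ 2de-d-1\}$, and hence $\alpha_2-\alpha_1=|p_1n_1-q_1n_2|=|n_1-n_2|=|d-e|$ (using $p_1=q_1=1$), which is manifestly a difference of minimal generators; by the criterion $R_H$ is Teter. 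The main obstacle is justifying that normal form: one must combine the combinatorial description $\tr(H)=\Omega_H^++\Omega_H^-$ of the canonical trace with Herzog's classification of codimension-two resolutions to show that nearly Gorensteinness $\MM_{R_H}\subseteq\tr(\omega_{R_H})$, together with the failure of almost symmetry, forces $p_1=p_2=q_1=q_2=1$; the remaining bookkeeping (matching variables with generators, and the common factor in $(n_1,n_2,n_3)$, which cancels on both sides) is routine.
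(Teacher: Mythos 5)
Your reduction of both parts to the arithmetic criterion is sound: specializing \autoref{thm:numeOZAKI} to $n=3$ (where $\type(H)=2$ is automatic by Herzog's structure theorem) does give ``$R_H$ is Teter iff $\alpha_2-\alpha_1$ is a difference of two minimal generators,'' and your treatment of part (2) is essentially the paper's proof: the paper likewise passes through this criterion (via \autoref{cor:nume4}, since almost Gorenstein plus type $2$ forces pseudo-symmetry), invokes the Herzog--Hibi--Stamate parametrization $H=\langle bc+b+1,\,ac+c+1,\,ab+a+1\rangle$ with $\PF(H)=\{abc-1,\,2(abc-1)\}$, and checks which pairwise differences can equal $abc-1$. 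The only divergence is cosmetic: you rule out $a,b,c\ge 2$ by a direct size estimate on $|n_i-n_j|$ versus $abc-1$, whereas the paper orders $a\le b\le c$ and derives contradictions from cases (a) and (c); both close the argument.

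Part (1), however, has a genuine gap. Your entire argument rests on the assertion that every $3$-generated nearly Gorenstein, non--almost-Gorenstein numerical semigroup has Hilbert--Burch matrix of the normal form $\left(\begin{smallmatrix} x_1 & x_2 & x_3^d\\ x_2 & x_3 & x_1^e\end{smallmatrix}\right)$ with $d\neq e$, i.e.\ that nearly Gorensteinness forces four of the six exponents in Herzog's cyclic normal form to equal $1$. This is not a statement proved anywhere in the paper, you give no precise citation for it, and your closing sentence explicitly defers its justification (``the main obstacle is justifying that normal form''). Since the whole point of (1) is precisely to extract Teterness from near-Gorensteinness, deferring this step means (1) is not actually proved: everything downstream of the normal form (the generators $\propto(2d+1,\,d+e+1,\,2e+1)$, $\PF(H)=\{2de-e-1,\,2de-d-1\}$, and $\alpha_2-\alpha_1=|d-e|=|n_1-n_2|$) checks out, but it is conditional on the unestablished classification. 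To repair this you would need to prove the normal form directly from $\MM_{R_H}\subseteq\tr(\omega_{R_H})$ --- e.g.\ by combining the description $\tr(H)=\Omega_H^++\Omega_H^-$ with the syzygy-degree formula $\alpha_2-\alpha_1=|p_1n_1-q_1n_2|$ and showing that any exponent $\ge 2$ outside the third column obstructs some generator $n_i$ from lying in $\tr(H)$ --- or cite the relevant statement from the Moscariello--Strazzanti or Herzog--Hibi--Stamate analyses of $3$-generated nearly Gorenstein semigroups with the hypotheses verified. (For what it is worth, the paper's own displayed proof addresses only part (2), so there is no argument for (1) in the source to compare yours against.)
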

\begin{proof}
    We show (2) first.
    Since it is a numerical semigroup generated by 3 elements, $H$ is pseudo-symmetric.
    Therefore, by Cor 4, it is sufficient to show the existence of a pair of generators whose difference is $\frac{\frob(H)}{2}$.
    By
    \cite[Theorem 3.3~(3)]{herzog2021canonical}, $H$ can be represented using $a,b,c>0$ as $H=\langle bc+b+1, ac+c+1, ab+a+1 \rangle$ where $(bc+b+1, ac+c+1)=1$.
    Furthermore, $\PF(H)=\{abc-1,2(abc-1)\}$.
    By symmetry, we assume $a\leq b\leq c$.
    (That is, $ab+a+1 \leq ac+c+1\leq bc+b+1$).
    If $a=1$, $R_H$ is clearly Teter, so we show the converse.
Assume that $R_H$ is Teter.
Then $abc-1$ equals one of $bc-ac+b-c$, $bc-ab+b-a$, and $ac-ab+c-a$.
    Assuming equality for (1) or (3) leads to a contradiction with the fact that the embedding dimension of $H$ is 3.
    Therefore, (2) implies $a=1$.
\end{proof}
\begin{remark}
    \begin{enumerate}[\rm(1)]
        \item 
        \autoref{propprop:numenume} generally does not hold when $\emb(H)\geq 4$.
        In fact, we can check that $\langle 11,12,14,15\rangle$ is a non-almost Gorenstein nearly Gorenstein ring satisfying $\type(H)=\emb(H)-1$, but it is not Teter.
        Actually, this numerical semigroup has $\PF(H)=\{13, 31, 32\}$ as its pseudo-Frobenius numbers, but by \autoref{thm:numeOZAKI}, it can not be Teter.
        \item 
        Also, even if the semigroup has minimal multiplicity when the embedding dimension is 4 or more, it is not necessarily Teter.
        Indeed, $H = \langle 4, 7, 9, 10 \rangle$ has minimal multiplicity and has $\PF(H)=\{3, 5, 6\}$ as its pseudo-Frobenius numbers. 
        However, similarly to the above, it follows from \autoref{thm:numeOZAKI} that it can not be Teter.
    \end{enumerate}
\end{remark}
\begin{proposition}\label{prop:cor:minimalnumefunny1}
    If $H$ has minimal multiplicity and $[\tr(\omega_{R_H})]_{a_1}\not= (0)$, then $R_H$ is Teter.
\end{proposition}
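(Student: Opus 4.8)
The plan is to exhibit an explicit element of $T(H)$ and then invoke \autoref{numerical_rem1}; the candidate is $\gamma_0:=\frob(H)+a_1$. Two elementary observations will be used throughout. First, for any numerical semigroup, if $a_j>a_i$ are minimal generators then $a_j-a_i\notin H$: otherwise $a_j=a_i+(a_j-a_i)$ expresses $a_j$ as a sum of two positive elements of $H$, each smaller than $a_j$, hence as a combination of generators other than $a_j$, contradicting minimality. Second, minimal multiplicity means $a_1=\embdim(R_H)=n$; then $0,a_2,\dots,a_n$ are $n$ elements of $\Ap(H,a_1)$ lying in pairwise distinct residue classes modulo $a_1$ (again by the first observation), and since $\#\Ap(H,a_1)=a_1=n$ this forces $\Ap(H,a_1)=\{0,a_2,\dots,a_n\}$, so that $\frob(H)=a_n-a_1$ and $\gamma_0=a_n$.

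First I would check $\gamma_0=a_n\in T_2(H)$. For each $i$ with $1\le i\le n-1$ we have $a_i-a_n=-(a_n-a_i)$ with $a_n-a_i$ a positive gap of $H$ by the first observation; since $\omega_{R_H}=\sum_{h\in\NN\setminus H}\kk t^{-h}+\sum_{h>0}\kk t^{h}$, this gives $t^{a_i-a_n}\in\omega_{R_H}$, i.e. $t^{a_i}\in t^{\gamma_0}\omega_{R_H}$. Thus the $n-1$ indices $i\in\{1,\dots,n-1\}$ work.

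The substance is to show $\gamma_0=a_n\in T_1(H)$, i.e. $t^{a_n}\omega_{R_H}\subsetneq R_H$, and here the hypothesis enters. By \cite[Lemma~1.1]{herzog2019trace} we have $\tr(\omega_{R_H})=\omega_{R_H}\,\omega_{R_H}^{-1}$, so $[\tr(\omega_{R_H})]_{a_1}\neq(0)$ says precisely $t^{a_1}\in\omega_{R_H}\,\omega_{R_H}^{-1}$. Working inside ${}^{*}Q(R_H)=\kk[t,t^{-1}]$, all the modules involved are spanned by monomials, so this forces $a_1=m+m'$ with $t^{m}\in\omega_{R_H}$ and $t^{m'}\in\omega_{R_H}^{-1}$; since $\omega_{R_H}^{-1}=\{x:xt^{-\alpha}\in R_H\text{ for all }\alpha\in\PF(H)\}$, the latter means $m'-\alpha\in H$ for every $\alpha\in\PF(H)$. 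I would then argue $m'\ge a_n$: from $m'-\frob(H)\in H$ we get $m'\ge\frob(H)$; and $m'=\frob(H)$ is impossible, because for any $\alpha\in\PF(H)\setminus\{\frob(H)\}$ --- which exists since $R_H$ is non-Gorenstein --- one has $\frob(H)-\alpha\notin H$ (otherwise $\alpha+(\frob(H)-\alpha)=\frob(H)\in H$); hence $m'>\frob(H)$, so $m'-\frob(H)$ is a positive element of $H$ and therefore $m'-\frob(H)\ge a_1$, i.e. $m'\ge\frob(H)+a_1=a_n$. Consequently $m=a_1-m'\le-\frob(H)<0$, so $m=-h$ for a positive gap $h\le\frob(H)$, and $-h\le-\frob(H)$ forces $h=\frob(H)$ and hence $m'=a_n$. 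Thus $t^{a_n}\in\omega_{R_H}^{-1}$, i.e. $t^{a_n}\omega_{R_H}\subseteq R_H$, and the inclusion is strict since $R_H$ is not Gorenstein (so $\omega_{R_H}$ is not free of rank one). Hence $\gamma_0\in T_1(H)\cap T_2(H)=T(H)\neq\emptyset$, and $R_H$ is Teter by \autoref{numerical_rem1}.

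The only step with real friction is the monomial bookkeeping in the third paragraph: pinning down the sets of exponents of $\omega_{R_H}$ and $\omega_{R_H}^{-1}$ and checking that $a_1=m+m'$ admits only the solution $(m,m')=(-\frob(H),a_n)$; everything else is a brief combinatorial check. As a byproduct, since every $\gamma\in T_1(H)$ satisfies $\gamma\ge\frob(H)+a_1$ by \autoref{numerical_prop1}, the argument yields $\gamma_H=\frob(H)+a_1$ and $\delta_H=0$, which by \autoref{numerical_cor1} is exactly the assertion that $H$ is almost symmetric; so in the minimal-multiplicity case the hypothesis $[\tr(\omega_{R_H})]_{a_1}\neq(0)$ is in fact equivalent to $H$ being almost symmetric.
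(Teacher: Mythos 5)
Your proof is correct, and its first half coincides with the paper's: both arguments extract from $[\tr(\omega_{R_H})]_{a_1}\neq(0)$ the single fact that $t^{\frob(H)+a_1}\in\omega_{R_H}^{-1}$ (equivalently, $\frob(H)+a_1\in\Omega_H^{-}$), by observing that a degree-$a_1$ monomial in $\omega_{R_H}\,\omega_{R_H}^{-1}$ can only arise as $t^{-\frob(H)}\cdot t^{\frob(H)+a_1}$; your justification of this step is actually more explicit than the paper's terse ``we may take $i=n-1$.'' The two proofs diverge in how they conclude. The paper feeds $\frob(H)+a_1\in\Omega_H^{-}$ into the structure of the Ap\'ery set $\Ap(H,a_1)=\{0,a_2,\dots,a_n\}$ to deduce $\alpha_{n-1}=\alpha_j+\alpha_{n-1-j}$ for all $j$, i.e.\ that $H$ is almost symmetric, and then invokes \autoref{numerical_cor1} (and hence the machinery of \autoref{thm:numeOZAKI}). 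You instead verify condition (3) of \autoref{numerical_rem1} directly with $\gamma=a_n=\frob(H)+a_1$: membership in $T_1(H)$ is exactly the statement $t^{a_n}\in\omega_{R_H}^{-1}$ (with strictness from non-Gorensteinness), and membership in $T_2(H)$ holds unconditionally under minimal multiplicity because $a_n-a_i$ is a gap of $H$ for every $i<n$. Your route is shorter and bypasses the characterization theorem entirely; the paper's route has the advantage of producing along the way the almost-symmetry statement recorded in the remark immediately following the proposition, though you recover the same equivalence as a byproduct via $\gamma_H=\frob(H)+a_1$ and $\delta_H=0$.
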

\begin{proof}
Assume that $H$ has minimal multiplicity and that $[\tr(\omega_{R_H})]_{a_1}\neq 0$.
Then $a_1\in \tr(H)$, so there exist $i\in [n-1]$, $h\in H$ and $z\in\Omega_H^{-}$ such that
$a_1 = h - \alpha_i + z$,
and in particular $a_1+\alpha_i \in \Omega_H^{-}$ for some $i\in [n-1]$.
By the description of $\Omega_H^{-}$, this means that for some $i\in [n-1]$ we have
$a_1+\alpha_i-\alpha_j \in H \quad\text{for all } j\in [n-1]\setminus\{i\}$.
Using the characterization of such an index, we may take $i=n-1$, and hence
$a_1+\alpha_{n-1}-\alpha_j \in H \quad\text{for all } j\in [n-1]$.
All the elements $a_1+\alpha_{n-1}-\alpha_j$ lie in the Ap\'ery set $\Ap(H,a_1)$,
where $j\in [n-1]$.
By the known structure of $\Ap(H,a_1)$ in this case, we obtain
$a_1+\alpha_{n-1}-\alpha_j = a_1+\alpha_{n-1-j}$
for every  $j\in [n-2]$.
Cancelling $a_1$ gives
$\alpha_{n-1} = \alpha_{n-1-j}+\alpha_j$ for all $j\in [n-2]$.
By \autoref{numerical_cor1}, this condition implies that $H$ is Teter (and hence almost symmetric), as required.
\end{proof}

\begin{remark}
From the proof above, if $H$ has minimal multiplicity,
then $[\tr(\omega_{R_H})]_{a_1}\neq 0$ holds if and only if $R_H$ is almost Gorenstein.
\end{remark}
% \begin{prop}[(TBD)]
%     $R_H/(t^{a_1})$ is stretched $\Rightarrow$ $\widehat{R_H}$ is stretched $\Rightarrow$ $R_H,\widehat{R_H}$ are Teter
% \end{prop}

\section*{Acknowledgments}
The first author was supported by JSPS KAKENHI Grant Number 24K16909.
The authors would like to thank Yuya Otake for pointing out typos.

%\textcolor{red}{Referenceの誤字直しはarXiv up前になんとかする...}
%\bibliographystyle{plain}
%\bibliography{References}

% \bib, bibdiv, biblist are defined by the amsrefs package.
% \bib, bibdiv, biblist are defined by the amsrefs package.

\end{document}